\newcommand\keywords[1]%
  {\begin{flushleft}
   \let\and\\%
   \textbf{Keywords:}\\
   #1
   \end{flushleft}%
  }
\title{Geometric Structure and Ergodic Properties of Bony Multi-Graphs }%
\author[1]{M. Rabiee}
\author[2]{F. H. Ghane}
\author[3]{M. Zaj}
\affil[1,2,3]{Department of Mathematics, Ferdowsi University of Mashhad, Mashhad, Iran}
\date{}
\newcommand{\subjclass}[2][1991]{%
  \let\@oldtitle\@title%
  \gdef\@title{\@oldtitle\footnotetext{#1 \emph{Mathematics Subject Classification.} #2}}%
}
\subjclass[2020]{37D25, 37D35, 37C40, 37C70, 37H12}
\definecolor{mygreen}{RGB}{28,172,0}
\definecolor{almond}{rgb}{0.94, 0.87, 0.8}
\definecolor{electriccrimson}{rgb}{1.0, 0.0, 0.25}
\definecolor{mylilas}{RGB}{170,55,241}
\newtheorem{theorem}{Theorem}[subsection]
\newtheorem{definition}{Definition}[subsection]
\newtheorem{proposition}{Proposition}[subsection]
\newtheorem{corollary}{Corollary}[subsection]
\newtheorem{example}{Example}[subsection]
\newtheorem{remark}{Remark}[subsection]
\newtheorem{lemma}{Lemma}[subsection]
\numberwithin{equation}{subsection}
\let\lvert=|\let\rvert=|
\newcommand{\f}{{f}_{\theta}}
\begin{document}
\maketitle


\begin{abstract}
The main goal in this paper is to describe the geometric structure of invariant graphs of a certain
class of skew products.
Our focus is on attracting multi-graphs.
An invariant multi-graph is an invariant compact set which is a finite union of invariant graphs, and thus consists of a finite number of points on each fiber.
We introduce invariant bony multi-graphs and construct an open set of skew products over an invertible base map (solenoid map) having attracting invariant multi-graphs
and bony multi-graphs which support finitely many ergodic SRB measures. In this study some thermodynamic properties are investigated.
Finally, we extend our results to a family of skew products over a generalized baker map.
\end{abstract}
{\it Keywords:} \small{Skew product, Invariant graph, Bony multi-graph, Lyapunov exponent, SRB measure, Topological pressure, Equilibrium state.}
\maketitle

\section{\textbf{Introduction}}
The aim of this paper is to give a comprehensive description,
from both measure-theoretic and topological viewpoints, of the dynamics of skew product systems
that have either monotone interval maps.
A skew product system is a dynamical system $(\Theta \times \mathbb{X} , F )$ of the form
\begin{equation}\label{skew}
 F: \Theta \times \mathbb{X} \to \Theta \times \mathbb{X}, \quad (\theta,x)\mapsto (S(\theta), f_\theta(x)),
\end{equation}
that driven by a base map $S$ (which can be a solenoid map or a baker map).

In this context, describing the asymptotic behavior of the orbits and understanding how this behaviour changes when the system
is modified, are two main targets.
In the study of skew products, invariant graphs, particulary attracting invariant graphs, play an essential role. In fact,
they are the natural substitutes of fixed points and considerably simplify the dynamics of the forced systems.
When skew product systems have uniformly contracting fiber maps (hyperbolic setting), there exist invariant
attracting sets for the overall dynamics, which are the graph of continuous functions
(see \cite{HPS, HPS1}).
For the case that the fiber maps fail to be hyperbolic, we need to impose specific conditions which guarantees the existence of an attracting invariant graph
that attracts orbits almost surely. This includes
the skew product map possesses a negative Lyapunov exponent in the fibre direction or it satisfies the contraction on average condition \cite{Stark1, Stark2, ZG}.


Some related results on the ergodic properties and stability of attracting graphs under deterministic perturbations have been proposed by Campbell and Davies \cite{CD, C2}.
Also, sufficient conditions for the existence and
regularity of invariant graphs has appeared in \cite{Stark1, Stark2, SS}. Stark discussed \cite{Stark1, Stark2} a number of applications to the filtering
of time series, synchronization and quasiperiodically forced systems.
In general, synchronization is the phenomenon that different oscillations in coupled systems will converge to oscillations that move with identical frequency. However, external forcing, rather than coupling, can also synchronize dynamics. This phenomenon has been widely studied in
theoretical physics \cite{PR, RS} and also recently in mathematics \cite{H}.
Another applications of invariant graphs in many branches of nonlinear dynamics were also proposed (e.g. \cite{BHN, CD, DK, HOY, PC, SD} etc.).

For skew-products with monotone fiber maps there is a close
relation between (the existence of) invariant graphs, (fiber) Lyapunov exponents and ergodic measures of $F$ (e.g. \cite{JJ, J, ZG, FKG} etc.).
For instance, the stability of an invariant graph is determined by its Lyapunov exponent,
if it is negative then the graph is attracting.
 In this literature, there are attracting invariant graphs with more complicated dynamics.
This includes bony graph attractors which are currently object of intense study \cite{KV, Kud, ZG}.
A bony graph attractor is an attracting invariant graph which intersects almost every fiber at a single point, and
any other fiber at an interval which is called a bone.

Here, a more general version of an invariant graph, the so-called invariant multi-graph is considered.
An invariant multi-graph is an invariant compact set which is
a finite union of invariant graphs, and thus consists of a finite number of points on each
fibre. In \cite{JK}, J\"{a}ger and Keller provided a criteria, in terms of Lyapunov exponents, for the existence of
attracting invariant multi-graphs.
Gelfert and Oliveira \cite{GO} studied step skew-products over a finite-state shift (base) space
whose fiber maps are $C^1$ injective maps on the unit interval. They provided
certain invariant sets having  a multi-graph structure and can be written as graphs
of one, two or more functions defined on the base.

As an objective, we describe the geometrical structures of attracting invariant
multi-graphs for a certain class of skew product systems as defined by (\ref{skew}).
In particular, we construct robust invariant bony multi-graphs.

Other goal is the existence of SRB measures whose supports lie on invariant graphs.
SRB stands for Sinai, Ruelle and Bowen, are the invariant measures most compatible with volume when
volume is not preserved.
References \cite{KV, VY, ZG} contain further results about the existence
and finiteness of SRB measures supported on attracting invariant graphs.
Another main objective is to show that the attracting multi-graphs, in our setting, carry finitely many ergodic SRB measures.

We also investigate some thermodynamic properties for these systems.
Thermodynamic formalism, that is the formalism of equilibrium statistical physics, was
adapted to the theory of dynamical systems in the works of Sinai, Ruelle and Bowen \cite{B,R2,S}.
Topological pressure, topological entropy and equilibrium states are the fundamental tools in thermodynamic formalism.
The existence and uniqueness of equilibrium states are currently object of intense study.
Here, we will provide some sufficient conditions ensuring the existence of equilibrium states supported on invariant multi-graphs.
Our approach for existence of equilibrium states is based on the technique applied in \cite{RV}. In this scenario, we require the graph functions. In our setting, since the contraction on the fibers is non-uniform, graph functions are only upper semicontinuous.
For existence of equilibrium states we apply a version of variational principle provided by Rauch \cite{R}
for discontinuous potentials.


\textbf{This work is organized as follows:}
First, in Sect. 2, we state some terminology concerning invariant multi-graphs
 and recall conditions ensuring the existence of multi-graphs from \cite{J,JJ,JK}.
In Sect. 3, we construct (Theorem \ref{t144}, Theorem \ref{thm88} and Corollary \ref{cor33}) an open set of skew products given by (\ref{skew})
 over an invertible base map (solenoid map) having attracting invariant multi-graphs or bony multi-graphs
that carry finitely many ergodic SRB measures.
Then we investigate some thermodynamic properties in Sect. 4.
Sufficient conditions ensuring the existence of equilibrium states supported on invariant multi-graphs is presented (Proposition \ref{p00}).
Finally, in Sect. 5, Theorem \ref{thm777}, we provide a measure theoretical isomorphism between the skew products over the solenoid map and skew products over a generalized baker map and deduce the existence of invariant bony multi-graphs for these systems.
\section{\textbf{Preliminaries}}
In this section, we introduce the concepts and the notations which are basic for the study
in the following sections.
\begin{definition}[SRB measure]\label{def:skew}
Let $X$ be a manifold and $f:X\longrightarrow X$ a continuous map.
An $f$-invariant measure $\mu$ is called $Sinai$-$Ruelle$-$Bowen$ (SRB) measure (or a $physical$ measure) if there exists a mesurable set $E\subset X$, of positive Lebesgue-measure, such that for every continuous function $\varphi:X\longrightarrow \mathbb{R}$ and every $x\in E$ we have:
\begin{align}
\lim_{n\rightarrow\infty}\frac{1}{n}\sum\limits_{j=0}^{n}\varphi(f^{j}(x))=\int\limits_{X}\varphi d\mu.
\end{align}
The set $E$ is called the basin of $\mu$.
In other words, time averages of all continuous functions are given by the corresponding space averages computed with respect to $\mu$, at least for a large set of initial states $x\in X.$
\end{definition}
\subsection{Skew products}
Consider a skew product system $(\Theta \times \mathbb{X} , F )$ of the form (\ref{skew}), where
the dynamics on the fibre space $\mathbb{X}$ may be interpreted as being driven by another system
$(\Theta, S)$ since the transformations $f_\theta : \mathbb{X} \mapsto \mathbb{X}$ depend on $\theta$.
 On the other hand, the space $\mathbb{X}$ is principally considered
as the fibre space over the basis dynamics $(\Theta , S)$, i.e. the fibre map $f_\theta$ can be considered as a map
from $\{\theta\} \times \mathbb{X}$ to $\{S\theta\} \times \mathbb{X}$, where $\{\theta\} \times \mathbb{X}$ is the fibre space over $\theta \in \Theta$.
We adopt the usual notation $ F^{n}(\theta,x ) = (S^{n}\theta , f_{\theta}^{n}(x))$,
for the iterates $F^{n}$ of $F$, where $f^{n}_{\theta} = f_{S^{n-1}(\theta)} \circ \cdots \circ f_{\theta}$.
Hence, $f_\theta^{n+k}(x) = f_{S^k \theta}^n (f^k_\theta(x))$. For $n = 1$ and $k =-1$ this includes the identity and $f_\theta^{-1}(x) = (f_{S^{-1}\theta})^{-1}(x)$.

In this article, we are interested in skew product dynamical systems $F$, defined as below:
\begin{definition}\label{generalskew}
$\mathcal{F}$ denotes the family of all skew product transformations $F$ given by (\ref{skew}) on $\Theta \times \mathbb{I}$ having the following properties:
\begin{enumerate}
  \item [(1)] The base $\Theta$ is equipped with a $\sigma$-algebra $\mathcal{B}$ and a probability measure $m$, such that $(\Theta, \mathcal{B}, m)$
becomes a probability measurable space, and the base transformation $S: \Theta \to \Theta$ is a bi-measurable and ergodic
measure-preserving bijection.
 In most situations we will assume that $\Theta$ is a compact metric
space, in which case $\mathcal{B}$ is always the (completed) Borel $\sigma$-algebra and $S$ is a homeomorphism.
  \item  [(2)] Let $\mathbb{I} = [0,1]$ and $\text{int}(\mathbb{I})=(0,1)$. The fibre maps $ f_{\theta} : \mathbb{I} \to \text{int}(\mathbb{I}) $ are given by $f_{\theta}(x) = \pi_{2} \circ F(\theta,x) $  with $\pi_{2}$ the natural projection from $\Theta \times \mathbb{I}$ to $\mathbb{I}$. We will assume that the fiber maps $f_{\theta} $ are $C^2$ increasing interval maps.
  \item [(3)] For each $x \in \mathbb{I}$ the map $\theta \mapsto F(\theta,x)$ is measurable. If $\Theta$ is a compact metric
space and $S$ is a homeomorphism then the map $\theta \mapsto F(\theta,x)$ is continuous.
 \end{enumerate}
\end{definition}
\begin{remark}\label{rem:com}
By definition, the dynamic contracts the whole space, i.e. $F(\Theta \times \mathbb{I}) \subset \Theta \times \text{int}(\mathbb{I})$.
\end{remark}
We equip $\mathcal{F}$ with the following metric:
\begin{align} \label{m}
 \text{dist}_{\mathcal{F}}(F,G):=\sup\limits_{\theta\in \Theta}\text{dist}_{C^2}(f^{\pm 1}_\theta,g^{\pm 1}_\theta), \ \ \text{for each}\ \ F,G \in \mathcal{F},
\end{align}
where $f_{\theta}$ and $g_{\theta}$ are the fiber maps of $F$ and $G$ respectively.
\subsection{Invariant graphs and Lyapunov exponents}
Invariant graphs are fundamental objects in the study of skew product systems,
and they are of major interest.
\begin{definition}[Invariant graph]\label{def:graph}

Let $F \in \mathcal{F}$ be a skew product as Definition \ref{generalskew}. A measurable function $\gamma : \Theta \to \mathbb{I}$ is called an invariant graph (with respect to $F$) if for all $\theta \in \Theta$:
\begin{equation}\label{in}
F(\theta , \gamma(\theta)) = (S \theta , \gamma(S \theta )) ,    \text{ equivalently} \quad f_{\theta}(\gamma(\theta)) = \gamma(S\theta).
\end{equation}
The point set $\Gamma := \{ (\theta , \gamma(\theta))  : \theta \in \Theta \}$ will be called invariant graph as well, but it is labeled with the corresponding capital letter. Denote by $\text{Cl}(\Gamma)$ the closure of $\Gamma$ in $\Theta \times \mathbb{I}$.

Let $m$ be an invariant measure for the base map $S$. We say that $\gamma: \Theta \to \mathbb{I}$ is an $(F,m)$ invariant graph if
\begin{equation}\label{1graph}
 f_\theta(\gamma(\theta))=\gamma(S(\theta)), \ \text{for} \ m-a.e. \ \theta \in \Theta.
\end{equation}
\end{definition}

In the sequel, we will denote by $\pi_1 : \Theta \times \mathbb{I} \to \Theta$ and
$\pi_2 : \Theta \times \mathbb{I} \to \mathbb{I}$ the canonical projections onto the first and second coordinates,
respectively.
To any $(F, m)$-invariant graph $\gamma$, an $F$-invariant measure $m_\gamma$ can be assigned by defining
\begin{equation}\label{m}
  m_\gamma(A) = m (\pi_1(A \cap \Gamma))
\end{equation}
for any measurable set $A \subseteq \Theta \times \mathbb{I}$.
Note that the measure $m_\gamma$ is ergodic if and only if $m$ is ergodic.

Throughout the paper, the set of all $F$-invariant probability measures on $\Theta \times \mathbb{I}$
is denoted by $\mathcal{M}(F)$, and the set of $\mu \in \mathcal{M}(F)$ which project to $m$ by $\mathcal{M}_m(F)$.

The next theorem is a counterpart of Theorem 1.8.4 in \cite{AL} to our setting, see also \cite[Theorem 1.1]{JJ} and Furstenberg Theorem \cite{F}.
It provides that there is a one-to-one correspondence
between invariant graphs and invariant ergodic measures of skew products forced by monotone interval maps.

\begin{theorem}\label{thm00}
Suppose $F \in \mathcal{F}$ is a skew product as Definition \ref{generalskew}, $m \in \mathcal{M}(S)$ and $\mu \in  \mathcal{M}_m(F)$ is ergodic.
Then there exists an $(F, m)$-invariant graph $\gamma$ such that $\mu = m_\gamma$.
\end{theorem}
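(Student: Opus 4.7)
The plan is to build the invariant graph by disintegrating $\mu$ over $m$ and exploiting the monotonicity of the fiber maps to force the conditional measures to be Dirac masses. Concretely, since $(\Theta,\mathcal{B},m)$ is a standard probability space and $\pi_{1*}\mu = m$, Rokhlin's disintegration theorem yields a family of conditional probability measures $\{\mu_\theta\}_{\theta\in\Theta}$ on $\mathbb{I}$ such that $\mu = \int \mu_\theta\,dm(\theta)$. The $F$-invariance of $\mu$ together with the $S$-invariance of $m$ and the essential uniqueness of disintegration translates into the cocycle identity $(f_\theta)_*\mu_\theta = \mu_{S\theta}$ for $m$-a.e.\ $\theta$.

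Next, I would consider the measurable function
\[
\phi:\Theta\times\mathbb{I}\to[0,1],\qquad \phi(\theta,x):=\mu_\theta\bigl([0,x]\bigr).
\]
Because each $f_\theta$ is strictly increasing, $f_\theta^{-1}([0,f_\theta(x)]) = [0,x]$, hence
\[
\phi(F(\theta,x)) = \mu_{S\theta}\bigl([0,f_\theta(x)]\bigr) = \bigl((f_\theta)_*\mu_\theta\bigr)\bigl([0,f_\theta(x)]\bigr) = \mu_\theta\bigl([0,x]\bigr) = \phi(\theta,x)
\]
for $\mu$-a.e.\ $(\theta,x)$. Thus $\phi$ is an $F$-invariant measurable function, and the ergodicity of $\mu$ forces it to be $\mu$-a.e.\ constant, say $\phi\equiv c$.

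The central step is then to argue that $\phi = c$ $\mu$-a.e.\ forces $\mu_\theta$ to be a Dirac mass for $m$-a.e.\ $\theta$. Indeed, for $m$-a.e.\ $\theta$ the cumulative distribution function $F_\theta(x):=\mu_\theta([0,x])$ satisfies $F_\theta(x)=c$ for $\mu_\theta$-a.e.\ $x$. If $\mathrm{supp}(\mu_\theta)$ contained two distinct points $y_1<y_2$, then $F_\theta$ would take at least two different values on a set of positive $\mu_\theta$-measure (any mass strictly above $y_1$ raises the value of $F_\theta$), contradicting its $\mu_\theta$-a.e.\ constancy. Hence $\mu_\theta = \delta_{\gamma(\theta)}$ for $m$-a.e.\ $\theta$, for some measurable function $\gamma:\Theta\to\mathbb{I}$ (measurability follows, e.g., by taking $\gamma(\theta)=\int x\,d\mu_\theta(x)$).

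Finally, I would verify that $\gamma$ is indeed an $(F,m)$-invariant graph: the identity $(f_\theta)_*\delta_{\gamma(\theta)} = \delta_{f_\theta(\gamma(\theta))}$ combined with $(f_\theta)_*\mu_\theta=\mu_{S\theta}=\delta_{\gamma(S\theta)}$ yields $f_\theta(\gamma(\theta))=\gamma(S\theta)$ for $m$-a.e.\ $\theta$, which is precisely (\ref{1graph}). Plugging $\mu_\theta=\delta_{\gamma(\theta)}$ back into the disintegration gives $\mu=m_\gamma$ as defined by (\ref{m}). The step I expect to require the most care is the measurability of the disintegration and of the final selection $\gamma$, along with the passage from the $\mu$-a.e.\ statement on $\phi$ to the $m$-a.e.\ statement on the fibers, which relies on Fubini applied to the disintegration; the monotonicity of the $f_\theta$ does all the essential dynamical work through the invariance of $\phi$.
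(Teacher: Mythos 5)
Your argument is correct in substance, and it is worth noting that the paper does not actually prove Theorem \ref{thm00}: it is quoted as a counterpart of Theorem 1.8.4 of Arnold \cite{AL} (see also \cite{JJ} and Furstenberg's theorem), so you are supplying a proof where the paper supplies a citation. Your route --- disintegrate $\mu$ over $m$, derive the fiberwise cocycle identity $(f_\theta)_*\mu_\theta=\mu_{S\theta}$, and then observe that monotonicity makes $\phi(\theta,x)=\mu_\theta([0,x])$ an $F$-invariant function, so that ergodicity of $\mu$ collapses each $\mu_\theta$ to a Dirac mass --- differs slightly from the standard argument behind the cited results, which typically introduces the quantile graphs $\gamma_s(\theta)=\inf\{x:\mu_\theta([0,x])\ge s\}$, checks via monotonicity that each $\gamma_s$ is an invariant graph, writes $\mu=\int_0^1 m_{\gamma_s}\,ds$, and invokes ergodicity of $\mu$ against this decomposition into ergodic graph measures. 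Your version uses ergodicity of $\mu$ on an invariant observable rather than on a decomposition, which is arguably more elementary. Two points deserve tightening. First, Rokhlin disintegration requires $(\Theta\times\mathbb{I},\mu)$ to be a standard (Lebesgue) probability space; the paper's general ``probability measurable space'' hypothesis is too weak for this, but in the compact-metric-base case (which is the one actually used, and the setting of \cite{AL}) this is automatic, so you should state that assumption. Second, your justification that $\mu_\theta$-a.e.\ constancy of $F_\theta$ forces $\mu_\theta$ to be Dirac is a little loose: if $\operatorname{supp}\mu_\theta$ contains $y_1<y_2$, pick $z$ strictly between them, so $a:=\mu_\theta([0,z])\in(0,1)$; then $F_\theta\le a$ on $[0,z]$ (positive mass), while
\begin{equation*}
\int_{(z,1]}\bigl(F_\theta(x)-a\bigr)\,d\mu_\theta(x)=\iint \mathbf{1}_{\{z<y\le x\}}\,d\mu_\theta(y)\,d\mu_\theta(x)\ \ge\ \tfrac{1}{2}\,\mu_\theta((z,1])^2>0,
\end{equation*}
so $F_\theta>a$ on a set of positive $\mu_\theta$-measure, giving the contradiction cleanly. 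With these two points made explicit your proof is complete and consistent with the sources the paper cites.
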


In the investigation of skew product systems,
attracting invariant graphs are often useful characteristics, which we simply call attractors. Formally, we introduce the following definition. Note that the effect of the attraction is observed only
in the fibre space $\mathbb{I}$.
\begin{definition}[Attracting invariant graphs]
A point $(\theta,x) \in \Theta \times \mathbb{I}$ is attracting, if there is a constant $\delta >0$ such that
$$\lim_{n\to \infty}|\f^n(x)-\f^n(z)| = 0,$$
for all $z \in (x-\delta,x+\delta)$.
Furthermore, an invariant graph $\gamma$ is called an attracting invariant graph or attractor with
respect to the invariant probability $\mu$ if $\mu$-almost every point is attracting.
\end{definition}
In a weaker form, we say that $A$ is an \emph{attractor} for $F$ (in the sense of Milnor \cite{Mil}) if there is a set of points in the phase space with positive probability whose future orbits tend to $A$, as the number of iterates tends to infinite. The set of orbits attracted to $A$ in the future is called its \emph{basin} and denoted by $B(A)$.
\begin{definition}[Maximal attractor]\label{def:att}
Given a continuous skew product $F$,
a compact set $D \subset \Theta \times \mathbb{I} $ is trapping if $F(D)\subset int{D}$. Then the closed $F$-invariant set
$$A_{max}:=\bigcap \limits _{n\geq 0}F^{n}(D)$$
is said to be a maximal attractor for $F$.
\end{definition}

\begin{definition}[Maximal Lyapunov exponent]\label{max}
The maximal Lyapunov exponent of an $F$-invariant probability measure $\mu$ is defined by
\begin{equation}\label{lya1}
 \lambda(\mu,F)=\lim_{n \to \infty}\frac{1}{n}\int_{\Theta \times \mathbb{I}} \log | Df_\theta^{n}(x)| d\mu(\theta,x).
\end{equation}
\end{definition}
The maximal Lyapunov exponent of an invariant graph $\gamma$ with respect to a $S$-invariant probablity measure $m$ is defined as
\begin{equation}\label{lya2}
 \lambda(m,\gamma)=\lim_{n \to \infty}\frac{1}{n}\int_{\Theta} \log |Df_\theta^{n}(x)|dm(\theta).
\end{equation}
Note that (\ref{lya2}) is a special case of (\ref{lya1}), with $\mu$ given by $\mu(A) = m(\{\theta \in \Theta: (\theta,\gamma(\theta))\in A \}$.
\begin{definition}[Upper Lyapunov exponent]
Following \cite{FKG, JJ}, for each skew product $F$ given by Definition \ref{generalskew},  the \emph{upper Lyapunov exponent of} $(\theta, x) \in \Theta \times \mathbb{I}$ is
\begin{equation}\label{lya3}
 \lambda_{max}(\theta, x)=\lim_{n \to \infty}\frac{1}{n} \log |Df_\theta^n(x)|,
\end{equation}
when the limit exists, where $Df_\theta(x)$ is the derivative of $f_\theta$ in $x$.

Given any $F$-invariant probability measure $\mu$, we define the \emph{upper Lyapunov exponent of} $\mu$ by
\begin{equation}\label{lya4}
\lambda_{max}(\mu)=\int \lambda_{max}(\theta, x) d\mu (\theta, x).
\end{equation}
\end{definition}
Note that since $f_\theta^n$ is increasing interval maps, $|Df_\theta^n(x)|=Df_\theta^n(x)$.

\begin{definition}[Fiber Lyapunov exponent]\label{ver}
The (fiber) Lyapunov exponent of an $(F ,m)$-invariant graph $\gamma$ is given by
\begin{equation}\label{lya}
 \lambda_m(\gamma)=\int_\Theta \log Df_\theta(\gamma(\theta))dm(\theta).
\end{equation}
\end{definition}
\begin{remark}\label{att}
Let $\gamma$ be an $(F ,m)$-invariant graph with $ \lambda_m(\gamma)<0$. Then, by \cite[Lemma 1.10]{JJ}, $\gamma$ is an attracting graph with respect to the invariant
probability $m_\gamma$.
\end{remark}
Note that by the Birkhoff ergodic theorem
\begin{equation*}
\begin{aligned}
\lambda_{max}(\theta,\gamma(\theta))
&= \lim_{n\to \infty} \frac{1}{n} \log Df_{\theta}^{n}(\gamma(\theta)) = \lim_{n\to \infty} \frac{1}{n} \sum_{k=0}^{n-1}\log Df_{S^{k}\theta}(f_{\theta}^{k}(\gamma(\theta)))\\
&= \lim_{n \to \infty}\frac{1}{n} \sum_{k=0}^{n-1}\log Df_{S^{k}\theta}(\gamma(S^k\theta))
= \int_{\Theta} \log Df_{\theta}(\gamma(\theta))dm (\theta)  \\
&=\lambda_{m}(\gamma)
\end{aligned}
\end{equation*}
for $m$-a.e.~$\theta\in\Theta$.
So the average Lyapunov exponent of an invariant graph equals its point-wise Lyapunov exponent for $m$-a.e. $\theta\in \Theta$.

Here, we focus on skew products having the
non-uniformly contraction rates along the fiber. We address the situation where we only
have information about average rates of contraction. This is a
weaker form of contraction which is a necessary condition for synchronization, see \cite{Stark2}.
When we talk about average rates of contraction, we need an invariant measure. In our setting, take the $S$-invariant measure $m$, as in Definition \ref{generalskew}.
\begin{definition}[Non-uniform contraction condition]\label{av}
Suppose that the limit
\begin{equation}\label{lya}
\lambda(\theta)=\lim_{n \to \infty}\frac{1}{n}\sup_{x,x^{\prime}\in I}\log \frac{d(f_\theta^n(x), f_\theta^n(x^{\prime}))}{d(x,x^{\prime})}
\end{equation}
exists for almost every $\theta$ and is a measurable function of $\theta$ (this is the maximal Lyapunov exponent in the fibre direction
of the skew product $F$).
We say that the skew-product $F$ contracts non-uniformly if there exists $\lambda < 0$ such that
\begin{equation}\label{neg}
 \lambda(\theta)\leq \lambda < 0
\end{equation}
for a.e. $\theta \in \Theta$.
\end{definition}
\begin{remark}\label{r11}
In a special case, when $\mu$ is given by $\mu(A) = m(\{\theta \in \Theta: (\theta,\gamma(\theta))\in A \}$, if the fiber Lyapunov exponent of $m$ has negative upper bound
then the skew product $F$ satisfies (\ref{neg}) and hence it contracts non-uniformly.
\end{remark}
As a consequence of non-uniform contraction condition we have the following result
of Stark \cite{Stark2}, which applies in a more general setting.
\begin{theorem}(\cite[Theorem 1.4]{Stark2})\label{thmg}
Suppose $\Theta$ is a compact metric space, $S:\Theta \to \Theta$ a homeomorphism,
$m$ an invariant measure, $X$ a complete metric space and the fiber map $f: \Theta \times X \to X$ is a continuous map
satisfying (\ref{neg}). Then there exists an $S$-invariant set $\Lambda \subset \Theta$ such that $m(\Lambda)= 1$ and a
function $\gamma: \Lambda \to X$ such that the graph of $\gamma$ is invariant and attracting under $F$.
\end{theorem}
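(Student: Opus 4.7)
The plan is to realize $\gamma$ as a pullback limit. Fix any basepoint $x_0\in X$ and set
\[
y_n(\theta) \;:=\; f^{\,n}_{S^{-n}\theta}(x_0), \qquad \gamma(\theta) \;:=\; \lim_{n\to\infty} y_n(\theta),
\]
whenever the limit exists. First I would apply Kingman's subadditive ergodic theorem to the log-Lipschitz cocycle
\[
a_n(\theta) \;:=\; \log \sup_{x\neq x'} \frac{d(f_\theta^n(x),\,f_\theta^n(x'))}{d(x,x')},
\]
which is subadditive, $a_{n+k}(\theta)\leq a_n(S^k\theta)+a_k(\theta)$, by the cocycle identity $f^{\,n+k}_\theta = f^{\,n}_{S^k\theta}\circ f_\theta^k$. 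This upgrades the limsup in (\ref{lya}) to a genuine a.e.\ limit and makes $\lambda(\theta)$ measurable and $S$-invariant, with $\lambda(\theta)\leq \lambda<0$.

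Next, using the cocycle identity in the form $y_{n+1}(\theta) = f^{\,n}_{S^{-n}\theta}\bigl(f_{S^{-(n+1)}\theta}(x_0)\bigr)$, I obtain
\[
d\bigl(y_{n+1}(\theta),\, y_n(\theta)\bigr) \;\leq\; e^{a_n(S^{-n}\theta)} \cdot d\bigl(f_{S^{-(n+1)}\theta}(x_0),\,x_0\bigr).
\]
The second factor is uniformly bounded (in this paper's applications $\mathbb{I}$ is compact, and boundedness of fiber trajectories is a standing hypothesis in Stark's framework). The main obstacle is controlling $a_n(S^{-n}\theta)$: although $a_n(\theta')/n\to \lambda(\theta')\leq \lambda$ pointwise, here the base point $S^{-n}\theta$ drifts with $n$. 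I would address this by combining Egorov's theorem---giving a set $E_\delta$ of $m$-measure $>1-\delta$ on which $a_n/n\to \lambda$ uniformly---with the Birkhoff theorem for $S^{-1}$ applied to $\mathbf{1}_{E_\delta}$, using $S$-ergodicity of $m$ to ensure the pullback orbit $\{S^{-n}\theta\}$ visits $E_\delta$ with density close to one. A subadditivity decomposition $a_n(\phi_n) \leq a_m(\phi_m) + a_{n-m}(\phi_n)$ (with $\phi_k := S^{-k}\theta$ and $m$ a nearby good time) then yields $a_n(S^{-n}\theta)\leq n(\lambda+\epsilon)$ for all large $n$ on a full-measure $S$-invariant set $\Lambda$. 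Consequently $\sum_n d(y_{n+1}(\theta),y_n(\theta))$ converges geometrically, so $\gamma(\theta) := \lim y_n(\theta)$ is well-defined, measurable, and independent of $x_0$ (by the same contraction estimate applied to two choices of basepoint).

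The remaining conclusions follow routinely. Invariance of the graph comes from the identity $f_\theta(y_n(\theta))=f^{\,n+1}_{S^{-n}\theta}(x_0)=y_{n+1}(S\theta)$ together with continuity of $f_\theta$ and passage to the limit, yielding $f_\theta(\gamma(\theta))=\gamma(S\theta)$. Attraction is immediate from the forward contraction bound: for any $x\in X$ and $\theta\in\Lambda$,
\[
d\bigl(f_\theta^n(x),\,\gamma(S^n\theta)\bigr) = d\bigl(f_\theta^n(x),\,f_\theta^n(\gamma(\theta))\bigr) \leq e^{a_n(\theta)}\,d(x,\gamma(\theta)) \longrightarrow 0.
\]
The technical heart of the argument is the uniformity step controlling the non-uniform cocycle along the backward orbit; this replaces the Banach fixed-point iteration available in the uniformly contracting setting, and is also the reason the attraction conclusion is inherently only $m$-a.e.\ rather than topological.
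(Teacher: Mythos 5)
The paper does not actually prove this statement: it is quoted verbatim as Theorem 1.4 of Stark's paper \cite{Stark2} and used as a black box, so there is no internal proof to compare against. Your argument is, in essence, a reconstruction of Stark's own proof: the pullback iteration $y_n(\theta)=f^n_{S^{-n}\theta}(x_0)$, Kingman applied to the log-Lipschitz cocycle, and a uniformity argument to control the cocycle along the drifting backward orbit. The outline is sound, and you correctly identify the one genuinely delicate point (bounding $a_n(S^{-n}\theta)$). Two remarks. First, there is a cleaner route through that step: $b_n(\theta):=a_n(S^{-n}\theta)$ satisfies $b_{n+k}(\theta)\leq b_n(\theta)+b_k(S^{-n}\theta)$, i.e.\ it is itself a subadditive cocycle over $S^{-1}$, and since $\int b_n\,dm=\int a_n\,dm$ by $S$-invariance of $m$, Kingman applied directly to $b_n$ gives $\frac1n b_n(\theta)\to\lambda(\theta)\leq\lambda<0$ a.e.\ without any Egorov/visit-density bookkeeping. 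Second, two hypotheses you lean on are not in the statement as quoted: ergodicity of $m$ (your Birkhoff density step uses it; the measure is only assumed invariant, though it is ergodic in this paper's standing assumptions and the argument can be patched by conditioning on the invariant $\sigma$-algebra) and uniform boundedness of $\theta\mapsto d(f_\theta(x_0),x_0)$ (true here since the fibre is $[0,1]$, but an extra integrability hypothesis in Stark's general setting). Neither is a fatal gap for the purposes of this paper, but both should be flagged if the theorem is meant in the stated generality.
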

\subsection{Multi graphs}
We concentrate on the case that a compact invariant set is
just a finite union of invariant graphs, and thus consists of a finite number of points on each
fibre.

Let $F \in \mathcal{F}$ be a skew product over a base map $S$ as Definition \ref{generalskew}.
\begin{definition}[Multi-graph]
Following \cite{GO}, given $F \in \mathcal{F}$, a multi-function $\psi: D \subset \Theta \to \mathbb{I}$ is a relation that associates
to every point $\theta \in D$ a nonempty subset $\psi(\theta) \subset \mathbb{I}$. A multi-function $\psi: D \subset \Theta \to \mathbb{I}$ is
uniformly finite if
there exists $k \geq 1$ such that $\# \psi(\theta) \leq k$ for all $\theta \in D$.
Given a uniformly finite multi-function $\psi$, we call the set $\{(\theta, \psi(\theta)): \theta \in D\}$
a multi-graph in $\Theta \times \mathbb{I}$.
\end{definition}

J\"{a}ger proved that \cite[Theorem 1.14]{JJ} for skew products with $C^1$ interval fiber maps having a minimal homeomorphism as a base, strict negativity of the Lyapunov exponents on a compact invariant set implies that
this set is a multi graph. Then this extended to a more general case \cite[Theorem 1.2]{JK}.

 Let $F \in \mathcal{F}$ be a skew product over a base map $S$ as Definition \ref{generalskew}.
 Then the tuple $(\Theta,\mathcal{B},m,S)$ is a measure-preserving dynamical system.

We say \cite{JK} $K\subset \Theta \times \mathbb{I}$ is a \emph{random compact set} if
\begin{enumerate}
  \item $K_\theta=\{x \in \mathbb{I}: (\theta,x)\in K\}$ is compact for all $\theta \in \Theta$;
  \item the functions $\theta \mapsto d(x,K_\theta)$ are measurable for all $x \in \mathbb{I}$.
 \end{enumerate}
The set of all $f$-invariant probability measures $\mu$ which project to $m$ supported on a compact $F$-invariant set $K$ is denoted by $\mathcal{M}_m^K(F)$.
 \begin{theorem}\cite[Theorem 1.2]{JK}
 Let $F \in \mathcal{F}$ be a skew product with base $(\Theta,\mathcal{B},m,S)$, the family
 $(x \mapsto \log \|D f_\theta^k(x)\|)_{\theta \in \Theta}$ is equicontinuous for all $k \in \mathbb{N}$ and $K \subset \Theta \times \mathbb{I}$
is a random compact set such that the maximal Lyapunov exponent $\lambda(\mu,F)$ given by (\ref{lya1}) is negative for all $\mu \in \mathcal{M}_m^K(F)$.
Then there exists an integer $n$ such that $\# K_\theta=n$ for $m$-a.e. $\theta \in \Theta$.
 \end{theorem}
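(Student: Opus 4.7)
The plan is to (i) extract a uniform exponential contraction rate on $K$ from the spectral hypothesis on $\mathcal{M}_m^K(F)$ and then (ii) convert that contraction into fiberwise finiteness. I would first introduce the cocycle
$$\phi_n(\theta) := \sup_{x \in K_\theta} \log Df_\theta^n(x),$$
which is subadditive via the chain rule $Df_\theta^{n+k}(x) = Df_{S^n\theta}^k(f_\theta^n(x)) \cdot Df_\theta^n(x)$ together with the inclusion $f_\theta^n(K_\theta) \subseteq K_{S^n\theta}$ coming from $F$-invariance; measurability of $\phi_n$ follows from the random-compact-set structure of $K$ and continuity of $x \mapsto Df_\theta^n(x)$. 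Kingman's subadditive ergodic theorem combined with the ergodicity of $(S,m)$ then produces an $m$-a.e.\ constant limit $\Lambda_0 := \lim_{n \to \infty} \phi_n(\theta)/n \in [-\infty,+\infty)$.

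The decisive intermediate step is to show $\Lambda_0 < 0$. Using a measurable selector $x_n(\theta) \in K_\theta$ attaining the supremum in $\phi_n(\theta)$, I would form the empirical averages
$$\tilde\mu_n := \int_\Theta \frac{1}{n}\sum_{k=0}^{n-1}\delta_{F^k(\theta,\, x_n(\theta))}\, dm(\theta);$$
any weak-$*$ accumulation point is an $F$-invariant measure in $\mathcal{M}_m^K(F)$ (it projects to $m$ by $S$-invariance of $m$) whose integrated fibre Lyapunov exponent is at least $\Lambda_0$. Since by hypothesis every $\mu \in \mathcal{M}_m^K(F)$ has $\lambda(\mu,F) < 0$, and $\mathcal{M}_m^K(F)$ is weak-$*$ compact with $\mu \mapsto \lambda(\mu,F)$ upper semicontinuous (again via Kingman), the supremum $\sup_\mu \lambda(\mu,F)$ is attained and strictly negative, which forces $\Lambda_0 < 0$.

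Fix $c \in (\Lambda_0, 0)$; by Egorov, there exist $N \in \mathbb{N}$ and a set $\Theta_0 \subset \Theta$ of large $m$-measure on which $\phi_N(\theta) \leq Nc$. The equicontinuity of $(x \mapsto \log Df_\theta^N(x))_{\theta \in \Theta}$ extends this pointwise-on-$K_\theta$ bound to a uniform $\delta$-neighborhood of $K_\theta$, so the mean value theorem yields
$$|f_\theta^N(x) - f_\theta^N(y)| \leq \rho |x - y|, \quad \rho := e^{Nc/2} < 1,$$
for all $x, y \in K_\theta$ with $|x - y| < \delta$ and all $\theta \in \Theta_0$. Iterating $F^N$ along orbits that visit $\Theta_0$ infinitely often (a full-measure set by Poincar\'e recurrence) collapses every $\delta$-cluster of $K_\theta$ at a geometric rate.

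For the finiteness conclusion, $F$-invariance of $K$ and strict monotonicity of each $f_\theta$ make $\nu(\theta) := \#K_\theta$ an $S$-invariant $\mathbb{N} \cup \{\infty\}$-valued function, so ergodicity forces $\nu \equiv N^*$ $m$-a.e.\ for some $N^* \in \mathbb{N} \cup \{\infty\}$. To rule out $N^* = \infty$, suppose each $K_\theta$ is infinite: then its derived set $K_\theta'$ is nonempty $m$-a.e., and $K' := \{(\theta, x) : x \in K_\theta'\}$ is an $F$-invariant random compact subset of $K$ (bijectivity of $f_\theta|_{K_\theta}$ preserves accumulation points). Kryloff-Bogoliubov on $K'$ combined with Theorem \ref{thm00} yields an invariant graph $\gamma' \subseteq K'$, and a measurable selection $x_n(\theta) \in K_\theta \setminus \{\gamma'(\theta)\}$ converging to $\gamma'(\theta)$ synchronizes with $\gamma'$ under iteration by the contraction above. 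Combined with the cover of $K_\theta$ by finitely many $\delta$-balls and the cardinality-preserving bijection $f_\theta|_{K_\theta} : K_\theta \to K_{S\theta}$, this should force the number of distinct orbits inside each cluster to be finite and hence $\nu(\theta) < \infty$. The principal obstacle is precisely this last step: converting synchronization near $\gamma'$ into a genuine cardinality bound requires ruling out a Cantor-like perfect residual of $K_\theta$ that would a priori be compatible with the exponential contraction, and the argument relies essentially on the order-preserving structure of the interval maps $f_\theta$ together with the bijectivity $K_\theta \to K_{S\theta}$.
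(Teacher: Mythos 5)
This statement is quoted verbatim from J\"ager--Keller \cite[Theorem 1.2]{JK}; the paper supplies no proof of its own, so your attempt can only be measured against the original source and against internal correctness. Your steps (1)--(4) follow the standard route: the subadditive cocycle $\phi_n(\theta)=\sup_{x\in K_\theta}\log Df_\theta^n(x)$, Kingman, the semi-uniform (Sturman--Stark type) argument via empirical measures to force $\Lambda_0<0$, Egorov plus equicontinuity to contract $\delta$-close pairs, and ergodicity to make $\#K_\theta$ a.e.\ constant. Two caveats there: the weak-$*$ accumulation argument in step (2) needs $\Theta\times\mathbb{I}$ compact and $F$ continuous, whereas the theorem is stated for a general probability space $(\Theta,\mathcal{B},m)$ --- this is exactly why the equicontinuity hypothesis appears in the statement, and J\"ager--Keller use it to run a measurable semi-uniform ergodic theorem in place of compactness of $\mathcal{M}_m^K(F)$; and your claimed lower bound $\lambda(\mu,F)\ge\Lambda_0$ should be checked via $\int\log Df\,d\tilde\mu_n=\int\frac1n\phi_n\,dm$ together with $\lambda(\mu,F)=\int\log Df\,d\mu$ for invariant $\mu$ (chain rule plus invariance), since a priori $\lambda(\mu,F)=\inf_n\frac1n\int\log Df^n\,d\mu$.

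The genuine gap is the one you flag yourself: step (5) never converts contraction into a cardinality bound, and the detour through the derived set $K_\theta'$ and an invariant graph $\gamma'\subseteq K'$ does not help --- synchronization of a sequence $x_n(\theta)\to\gamma'(\theta)$ with $\gamma'$ is automatic from contraction and says nothing about $\#K_\theta$, precisely because a Cantor set is compatible with fibrewise exponential contraction of nearby pairs. What is actually needed is a quantitative backward-separation count. From the block contraction one shows that for any two distinct $x,y\in K_\theta$ the set of backward times $n$ at which $|f_\theta^{-n}(x)-f_\theta^{-n}(y)|\ge\delta$ has density bounded below by some $\eta>0$ depending only on $\rho$, $N$, $m(\Theta_0)$ and the uniform bound on $Df$ (otherwise the pair is $\delta$-close on a density-one set of times, the good blocks accumulate a factor $\rho^{(1-o(1))n/N}$, and injectivity forces $x=y$). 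Monotonicity of the fibre maps then makes adjacent pairs $x_i<x_{i+1}$ of any $k$-point subset produce \emph{pairwise disjoint} $\delta$-gaps at their separation times, and since a unit interval carries at most $1/\delta$ disjoint $\delta$-gaps at any single time, summing the densities gives $(k-1)\eta\le 1/\delta$, i.e.\ $\#K_\theta\le 1+1/(\eta\delta)$. Without this counting step (or an equivalent), the proof does not reach the conclusion.
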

If the base map $S$ is a homeomorphism on a compact metric space $\Theta$, since fiber maps are $C^2$ and $F$ is continuous,
the next corollary can be followed immediately.
\begin{corollary}\label{thmm}
Suppose $\Theta$ is a compact metric space, $m$ an invariant ergodic measure and $S: \Theta \to \Theta$ is an
ergodic measure-preserving homeomorphism.
Assume, $F \in \mathcal{F}$ is a skew product as Definition \ref{generalskew} over the base map $S$ and $K$ is a compact $F$-invariant set.
Further, assume that for all $\mu \in \mathcal{M}_m^K(F)$ we have
$\lambda(\mu,F) < 0$. Then there exists an integer $n$ such that $\# K_\theta=n$ for $m$-a.e. $\theta \in \Theta$. In particular, if $n > 1$ then $K$ is a multi-graph.
\end{corollary}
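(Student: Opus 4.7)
The strategy is to reduce the corollary directly to the preceding theorem \cite[Theorem 1.2]{JK}, whose hypotheses require (a) $K$ to be a random compact set and (b) the family $(x\mapsto\log Df_\theta^{k}(x))_{\theta\in\Theta}$ to be equicontinuous for every $k\in\mathbb{N}$. Once these two ingredients are in place, the conclusion $\#K_\theta=n$ on a set of full $m$-measure is immediate; the multi-graph statement then follows from the definition of a uniformly finite multi-function.

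First I would verify that $K$ is a random compact set. Since $K$ is compact in $\Theta\times\mathbb{I}$, each fibre $K_\theta$ is closed in the compact interval $\mathbb{I}$, giving condition (1). For condition (2), compactness of $K$ inside a product of compact metric spaces makes the set-valued map $\theta\mapsto K_\theta$ upper hemicontinuous in the Hausdorff metric, from which one obtains lower semicontinuity (and hence Borel measurability) of $\theta\mapsto d(x,K_\theta)$ for every fixed $x\in\mathbb{I}$.

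Next I would establish the required equicontinuity. Because $\Theta$ is a compact metric space, $S$ a homeomorphism, $F$ continuous and the fibre maps $C^{2}$, the derivative $(\theta,x)\mapsto Df_\theta(x)$ is jointly continuous on the compact product $\Theta\times\mathbb{I}$; as the derivative of an increasing $C^{2}$ interval map it is strictly positive, so on $\Theta\times\mathbb{I}$ it is bounded away from zero and infinity, and consequently $(\theta,x)\mapsto\log Df_\theta(x)$ is jointly uniformly continuous. For each $k\geq 1$ the chain rule gives
\[
\log Df_\theta^{k}(x)=\sum_{j=0}^{k-1}\log Df_{S^{j}\theta}\bigl(f_\theta^{j}(x)\bigr),
\]
so combining the uniform continuity of $\log Df_\cdot(\cdot)$ with the uniform continuity of the finite compositions $f_\theta^{j}$ in $x$ (uniformly in $\theta$, again by compactness) yields equicontinuity of $\{x\mapsto\log Df_\theta^{k}(x)\}_{\theta\in\Theta}$ for each fixed $k$.

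With (a) and (b) in hand, \cite[Theorem 1.2]{JK} supplies an integer $n$ such that $\#K_\theta=n$ for $m$-almost every $\theta$. If $n>1$, then setting $\psi(\theta):=K_\theta$ on the full-measure set where this cardinality is realised produces a uniformly finite multi-function (with $\#\psi(\theta)\leq n$) whose graph coincides, up to a null set, with $K$; thus $K$ is a multi-graph in the sense of the definition above. The principal obstacle I anticipate is the uniformity in the equicontinuity step, where one must ensure that the modulus of continuity in $x$ can be chosen independently of $\theta$; this is ultimately forced by the joint uniform continuity on the compact product $\Theta\times\mathbb{I}$.
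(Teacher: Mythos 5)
Your proposal is correct and follows essentially the same route as the paper: the paper simply asserts that, because $\Theta$ is compact, $S$ a homeomorphism, $F$ continuous and the fibre maps $C^2$, the hypotheses of \cite[Theorem 1.2]{JK} (random compactness of $K$ and equicontinuity of $(x\mapsto\log Df_\theta^k(x))_{\theta\in\Theta}$) are satisfied, and the corollary follows immediately. You have merely spelled out the verification of those two hypotheses in more detail than the paper does, which is a faithful elaboration rather than a different argument.
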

\section{\textbf{Bony multi-graphs for skew products with interval fiber maps}}
Our major goal here is to describe the structure of invariant graphs and study the geometry of attractors, mainly, in the case that the basis dynamical system is a
solenoid map. In particular, we construct robust bony multi-graphs. In our construction, first we provide a single skew product $\widetilde{F}$ over an expanding circle map.
Then, we consider its extension $F$ which is a skew product over the solenoid map and show that every skew product $G$
which is close enough to $F$ admits a bony multi-graph which carries finitely many ergodic SRB measures.

We recall the family $\mathcal{F}$ of skew products given by Definition \ref{generalskew}. The next theorem is the main result of this section.
\begin{theorem}\label{t144}
Given any $n \in \mathbb{N}$, there exists an open set $\mathcal{U}\subset \mathcal{F}$ of skew products of the form (\ref{skew}) with interval fiber maps having a solenoid map as the base map
such that each skew product system $G$ belonging to $\mathcal{U}$ admits an attracting invariant multi-graph or bony multi-graph having exactly $n$
SRB measures supported in $K(G)$.
\end{theorem}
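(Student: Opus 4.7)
The plan is to follow the two-step strategy outlined in the introduction: first build a model skew product $\widetilde F$ over an expanding circle map that has $n$ attracting invariant graphs, then take its natural extension $F$ over the solenoid, and finally prove that the bony multi-graph structure together with the SRB count persists on a $C^2$-open neighbourhood $\mathcal U$ of $F$.

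For the model, fix $n$ and let $T(\theta)=k\theta\bmod 1$ with $k\geq n$. Choose $n$ pairwise disjoint closed subintervals $J_1,\dots,J_n\subset\mathrm{int}(\mathbb I)$ and design $C^2$ fibre maps $\widetilde f_\theta:\mathbb I\to\mathrm{int}(\mathbb I)$ depending continuously on $\theta$ so that (i) each $J_i$ is strictly forward invariant under every $\widetilde f_\theta$ with $\sup_{\theta}\sup_{x\in J_i}\log D\widetilde f_\theta(x)<0$, and (ii) the complement of $\bigcup_i J_i$ is pushed into a neighbourhood of some $J_{j(\theta,i)}$ according to a symbolic code dictated by a partition of the $k$ inverse branches of $T$ into $n$ groups. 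A standard graph-transform argument inside each $J_i$ produces $n$ attracting invariant graphs $\widetilde\gamma_1,\dots,\widetilde\gamma_n$ with strictly negative fibre Lyapunov exponents. Letting $S:\Sigma\to\Sigma$ be the solenoid (natural extension of $T$) and $\pi:\Sigma\to S^1$ be the canonical projection, set $f_\theta:=\widetilde f_{\pi(\theta)}$ to obtain $F\in\mathcal F$, whose lifted invariant graphs $\gamma_i^F:=\widetilde\gamma_i\circ\pi$ inherit negative fibre exponents with respect to the solenoid measure $m$. Because $\pi$ is many-to-one while $S$ is invertible, the maximal attractor $K(F)=\bigcap_{j\geq 0}F^j(\Sigma\times\mathbb I)$ is not a union of continuous graphs: at every $\theta$ whose backward orbit along $S$ projects through the transition zones of the coding, an entire interval of points survives the intersection, producing bones, while on the $m$-full set of $\theta$ whose backward orbits avoid the transitions, $K(F)_\theta$ has cardinality exactly $n$. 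This is the bony multi-graph structure.

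For openness, equip $\mathcal F$ with the $C^2$ metric from (\ref{m}). If $G$ is close enough to $F$, every $J_i$ remains forward invariant under each $g_\theta$ and the pointwise contraction estimate on $J_i$ deteriorates only slightly, so the non-uniform contraction condition (\ref{neg}) of Definition \ref{av} is preserved. Theorem \ref{thmg} then produces $n$ attracting $(G,m)$-invariant graphs $\gamma_i^G$, and Corollary \ref{thmm} forces $\#K(G)_\theta=n$ for $m$-a.e.\ $\theta$; the bony skeleton is inherited by decomposing the attractor according to the $J_i$ and repeating the construction fibre by fibre. Each graph $\gamma_i^G$ carries the natural invariant measure $\mu_i^G$ obtained by lifting $m$ via $\theta\mapsto(\theta,\gamma_i^G(\theta))$; ergodicity of $m$ transfers to $\mu_i^G$, and the attraction on $J_i$ together with the smooth unstable foliation of the solenoid base shows that the basin of $\mu_i^G$ has positive Lebesgue measure in $\Sigma\times\mathbb I$, verifying Definition \ref{def:skew}. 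By Theorem \ref{thm00} applied to every ergodic component in $\mathcal{M}_m^{K(G)}(G)$ and by the $J_i$-trapping decomposition, these $n$ measures exhaust the ergodic SRB measures supported in $K(G)$.

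The main difficulty lies in the bony part of Step~2: one must show that the non-injectivity of $T$ combined with the invertibility of $S$ genuinely unfolds the symbolic discontinuities at transition fibres into full intervals of attractor points, rather than merely isolated points; and one must then check that these intervals survive the $C^2$ perturbation to $G$. The remaining pieces reduce to standard graph-transform, Stark-type attraction, and ergodic-decomposition arguments.
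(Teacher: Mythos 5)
Your overall architecture (a model skew product over an expanding circle map, passage to the natural extension over the solenoid, $C^2$-open persistence, and one SRB measure per trapping interval) matches the paper's. The decisive problem is the mechanism you propose for the bones: under your hypotheses no bones can occur. You require $\sup_{\theta}\sup_{x\in J_i}\log D\widetilde f_\theta(x)<0$, i.e.\ \emph{uniform} fibre contraction on each $J_i$. Then for every point $\mathbf{t}$ of the solenoid the nested images $g_{\mathbf{t}}\circ g_{S^{-1}(\mathbf{t})}\circ\cdots\circ g_{S^{-m}(\mathbf{t})}(J_i)$ have geometrically decaying diameters, so $K(F)_{\mathbf{t}}\cap J_i$ is a single point on \emph{every} fibre and the attractor in each $\Sigma\times J_i$ is a continuous invariant graph. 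The non-injectivity of $T$ versus the invertibility of $S$ does not ``unfold symbolic discontinuities into full intervals'': the fibre over $\mathbf{t}$ only sees the backward itinerary of $\mathbf{t}$, and under uniform contraction the intersection along any backward itinerary is a point. So the claim that ``an entire interval of points survives the intersection'' at transition fibres is false in your construction, and the same uniformity makes the persistence step trivial rather than addressing the actual difficulty.

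The paper obtains bones from a genuinely different source: the two generating fibre maps $f_0,f_1$ are only \emph{weakly} contractive, with $Df_j(p_j)=1$ at their fixed points, and they are frozen (equal to $f_0$, resp.\ $f_1$) over two arcs $L_0,L_1$ of the circle; only a contraction-on-average property (\ref{caverage}) holds. Along the uncountable, measure-zero set of backward orbits that spend asymptotically all their time near $L_0\cup L_1$, the backward compositions fail to contract and a nondegenerate interval survives in the maximal attractor (Lemma \ref{G}, imported from \cite{ZG}). Negativity of the fibre Lyapunov exponent is then a non-uniform statement, proved via Kingman's subadditive theorem together with the fact that almost every forward $\omega$-orbit enters the region where $Df_t<1$ strictly (Lemma \ref{lemma0}); its persistence under $C^2$ perturbation uses the characterization of the exponent as an infimum of the subadditive sequence (Lemma \ref{cor0}). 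The dichotomy ``continuous graph or bony graph'' and the SRB property of the graph measures are then supplied by Theorem \ref{thm88}. If you insist on uniform contraction on the $J_i$ you can still produce an open set of skew products with an attracting multi-graph carrying $n$ SRB measures (so the disjunctive statement is technically met), but the bony alternative — the actual content of Theorem \ref{t144} and of Lemma \ref{G} — is unreachable by your argument, and the step you flag as the ``main difficulty'' rests on an incorrect mechanism.
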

The rest of this section is devoted to proving this theorem.
\subsection{Skew products forced by expanding circle maps}
If $X$ is a metric space and $f : X \longrightarrow X$ is a continuous map, then
we say that $f$ is \emph{weakly contractive} whenever for each $x,y \in X$ with $x\neq y$, $d(f(x),f(y)) < d(x,y)$.
\begin{remark}\label{fweak}
It is a well-known fact \cite[Coro.~3]{J} that if $f$ is weakly contractive and $X$ is compact then there exists a unique fixed point
$x \in X$ of the map $f$. Furthermore, for every $y \in X$, $\lim_{k \to \infty}f^k(y)=x$ uniformly.
Then we say that $x$ is a \emph{weak attracting fixed point}.
Clearly if $f$ is a weakly contractive map then
$$d(f^{n}(y), f^{n}(z)) \to 0, \ as, \ n \to \infty,$$
for each $y, z \in X$.
\end{remark}
\begin{definition}
We say that the interval map $f$ is $s$-weakly contractive if it satisfies the following conditions:
\begin{enumerate}
  \item [$(1)$] $f$ is a $C^1$ weakly contractive map.
  \item [$(2)$] Let $p$ be the unique fixed point of $f$. Then, $Df(p)=1$ and for all $x\neq 0$, one has $Df(x)<1$.
  \end{enumerate}
\end{definition}
The following example shows that the set of $s$-weakly contractive maps is nonempty.
\begin{example}
Define $f:[0.1,0.7] \to [0.1,0.7]$ by $f(x)=3.098 x^{1.83}-2.5 x^{2.4} +0.1$. Then $f$ is a weakly contractive map with the unique fixed point $0.466$.
In particular, $Df(0.466)=1$ and for all $x\neq 0.466$, we have $Df(x)<1$.
\end{example}
In \cite{E}, the author gave another examples of $s$-weakly contractive maps.
\begin{definition}[Weak-pair]\label{k}
Take two $C^2$ increasing interval maps $f_i$, $i=0, 1,$ defined on an interval $[a,b]$, $C^2$-close to the identity so that they fulfill the following conditions:
\begin{enumerate}
  \item [$(1)$] Each $f_i$, $i=0, 1,$ is a $s$-weakly contractive map having a (unique) weak attracting fixed point $p_i$.
  \item [$(2)$] For every $x \in [a,b]$, the following ``contraction on average property" hold:
  \begin{equation}\label{caverage}
    \sum_{i=0}^1 \log Df_i(x)<0.
  \end{equation}


  \item [$(3)$] The weak attracting fixed points $p_i$, $i=0, 1$, are pairwise disjoint. Moreover, $p_i\neq a,b$ and $g_i(p_j)\neq p_i$ for each $j \neq i$.

\end{enumerate}
 Let $p_0< p_1$ and $J = [p_0 , p_1]\subset [a,b]$. Then we say that $(f_0|_{J},f_1|_{J})$ is a weak-pair for $J$.
\end{definition}
\begin{remark}\label{cov}
By conditions (1) and (3) the following ``covering property" holds: there exists the points $x_0$ and $x_1$ with $p_0 < x_0 < x_1 < p_1$ such that for the interval
$B = (x_0, x_1) \subset \text{int}(J)$ one has:
\begin{equation*}
\forall x\in [x_1,x_2], \ Df_i(x)<1 \ \text{and} \   \text{Cl}(B) \subset f_0(B) \cup f_1(B).
\end{equation*}
\end{remark}
Note that, covering property introduced in \cite{NP}.
In Figure \ref{fig:1} below, the pairs $(f_0|_{I_1},f_1|_{I_1})$ and $(f_0|_{I_2},f_1|_{I_2})$ are weak-pairs.
\begin{figure}[h]
\begin{center}
\includegraphics[scale=0.35]{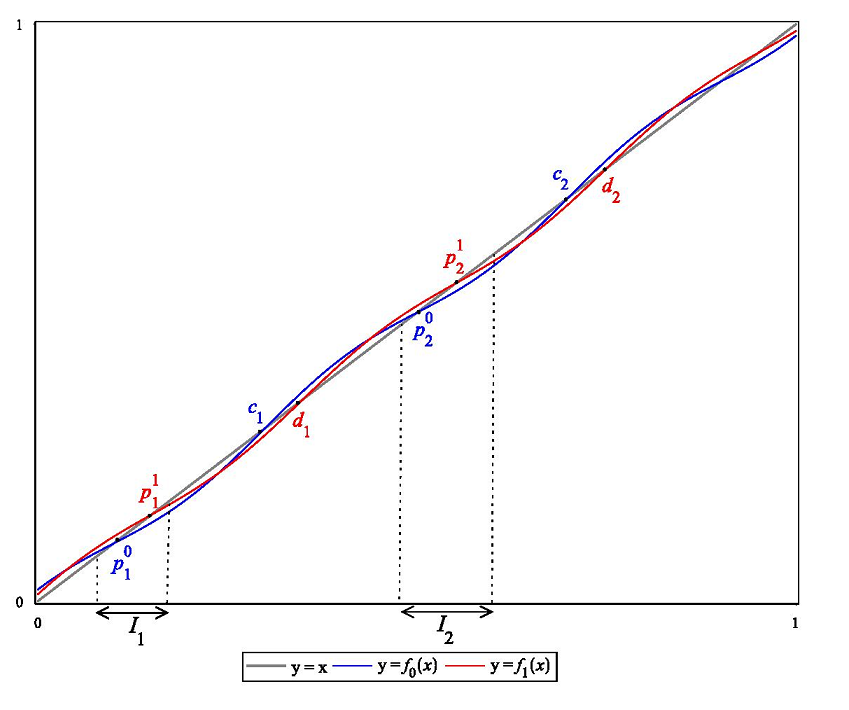}
\caption{Two increasing $C^2$ maps $f_0$ and $f_1$ with two weak-pairs}
\label{fig:1}
\end{center}
\end{figure}

Fix $n \geq 1$ and consider $n$ disjoint subintervals of $\mathbb{I}=[0, 1]$,
say $I_1 = [a_1, b_1], \ldots , I_n = [a_n, b_n]$, with
\begin{equation*}
0 <a_1 < b_1 < a_2 < \ldots < a_n < b_n <1
\end{equation*}
and two increasing $C^2$ maps $f_0, f_1: \mathbb{I} \to \mathbb{I}$ such that
\begin{enumerate}
  \item [($i$)] $f_0(I_i)\subset I_i, f_1(I_i)\subset I_i \ \text{and} \ (f_0|_{I_i},f_1|_{I_i}) \ \text{is \ a \ weak-pair \ for \ each} \ I_i, \ i=1, \ldots, n$,
  hence there exist weak attracting fixed points $p^0_i$ and $p^1_i$ of $f_0$ and $f_1$, respectively;
  \item [($ii$)] $f_0$ and $f_1$ have repelling fixed points $c_i$ and $d_i$, $i=1, \ldots, n-1$, respectively, such that
\begin{equation*}
  0 <b_1 < c_1 < d_1 < a_2 <\ldots b_{n-1}<c_{n-1}< d_{n-1}<a_n < b_n <1.
\end{equation*}
  \end{enumerate}
  We assume that $a_i<p^0_i<p^1_i<b_i$.
By Remark \ref{cov}, there exist points $x_i$ and $y_i$, $i=1, \ldots, n$,
 such that $a_i < x_i < y_i < b_i$ and intervals $B_i = (x_i , y_i ) \subset \text{int}(I_i )$ satisfies
the following covering property:
\begin{equation}\label{f1}
  \forall x \in [x_i,y_i], Df_i(x)<1, \ \text{and} \ \text{Cl}(B_i)\subset f_0(B_i) \cup f_1(B_i).
\end{equation}
 Consider the circle expanding map $\omega :\mathbb{T}^1 \to \mathbb{T}^1$, $\omega(t)=4t$ $(mod \ 1)$.
 Let
 \begin{equation*}
   L_i \subset \mathbb{T}^1, \ i=0,1,
 \end{equation*}
be disjoint closed arcs, with the length of each arc equal to $1/4$.
Then for $0 \leq i \leq 1$ we have $\omega(L_i) = \mathbb{T}^1$.
Take a smooth map
 $\ell : \mathbb{T}^1 \to [0, 1]$ such that $\ell |L_0\equiv0$, $\ell |L_1\equiv 1$ and outside the $\delta$-neighborhood of $L_0 \cup L_1$, for sufficiently small $\delta>0$, $\ell(t) \in (0,1)$.

 Now, consider an isotopy
\begin{align}\label{is}
f_{t}(x)=(1-(\ell(t)^2))f_{0}(x)+(\ell(t)^2)f_{1}(x)
\end{align}
between $f_0$ and $f_1$. Then, for each $t \in L_0$, $f_t=f_0$ and for each $t \in L_1$, $f_t=f_1$.
It is easy to see that
\begin{align}\label{de}
\forall t\in \mathbb{T}^1\setminus B_\delta(L_0 \cup L_1) \ \text{and}\ \forall x\in I_i, \ i=1, \cdots, n, \ \text{one has} \ Df_{t}(x) <1.
\end{align}

We now define a skew product over the expanding circle map $\omega$, corresponding to the fiber maps $f_0$ , $f_1$ by
\begin{align}\label{be}
\widetilde{F}:\mathbb{T}^1\times I\to \mathbb{T}^1\times I,\quad \widetilde{F}(t,x)=(\omega(t),f(t,x)),
\end{align}
where $f(t,x):=f_{t}(x)$ given by (\ref{is}).
In the rest of this article we fix the skew products $\widetilde{F}$ given by (\ref{be}).

Clearly, by construction, for each $1\leq i\leq n$, the compact set $D_i:=\mathbb{T}^1 \times I_i $ is a trapping region for $\widetilde{F}$.
Let us take
\begin{equation}\label{max}
 A_{max}(\widetilde{F}):=\bigcap_{n\geq 0}\widetilde{F}^n(\mathbb{T}^1 \times \mathbb{I}), \ \text{and} \ \Lambda_i:=\bigcap_{n\geq 0}\widetilde{F}^n(\mathbb{T}^1 \times I_i), \ 1\leq i\leq n.
\end{equation}
Then $\Lambda_i$, $1\leq i\leq n$, are maximal attractors of $\widetilde{F}$ corresponding to the trapping regions $D_i$ and the union $\bigcup_{i=1}^n \Lambda_i$ is a compact invariant set contained in the global attractor $A_{max}(\widetilde{F})$.
\subsection{Extension skew products and invariant graphs}
Since the base map $\omega:\mathbb{T}^1 \to \mathbb{T}^1$ is non-invertible, hence
 we can always find an extension $S: \Omega\to \Omega$ that is invertible.
By extension we mean that there exists a surjective map
$\textbf{p}: \Omega\to \mathbb{T}^1$ such that,
 $\textbf{p}\circ S=\omega\circ \textbf{p}$.
 To begin with, take $\Omega$ to be the set of all pre-orbits of
 $\omega$, that is, all sequences $(t_{-n})_{n\leq 0}$
 satisfying
 $\omega (t_{-n})=t_{-n+1}$ for every $n>0$.
 Consider the map $\textbf{p}: \Omega\to \mathbb{T}^1$ sending each sequence  $(t_{-n})_{n\geq 0}$
 to its term $t_0$ of order zero. Observe that $\mathbf{p}(\Omega)=\mathbb{T}^1$. Finally, we define
 $S: \Omega\to \Omega$ by
 $$ S(\dots, t_{-n}, \dots, t_0)= (\dots, t_{-n}, \dots, t_0, \omega(t_0)).$$
 It is clear that $S$ is well defined and satisfies
 $\mathbf{p}\circ S=\omega\circ\mathbf{p}$.
The inverse limit space $\Omega$ is endowed with the product topology, then it is easy to see that $S$ is a homeomorphism on $\Omega$.
\begin{remark}\label{e}
On ergodic point of view, given an ergodic measure $\mu^+$ defined on
Borel subsets of $\mathbb{T}^1$ there exists a unique measure $\mu$ defined on Borel
subsets of $\Omega$ such that $\mathbf{p}_\ast \mu=\mu^+$.
We mention that since $\omega:\mathbb{T}^1\longrightarrow \mathbb{T}^1$ is a $C^2$-expanding endomorphism then $\omega$ possesses an absolutely continuous invariant ergodic measure $\nu^{+}$ whose density is bounded and bounded away from zero. \cite{M}. Moreover, $\nu^{+}$ is equivalent to Lebesgue.
Thus, $(\Omega,S)$ has an invariant ergodic measure $\nu$ inherited from the invariant measure $\nu^+$ for $\omega$ on $\mathbb{T}^1$, i.e. $\mathbf{p}_\ast \nu=\nu^+$.
\end{remark}
For the skew products $\widetilde{F}$ given by (\ref{be}), we define an extension skew product map $F=F(\widetilde{F})$ on $\Omega \times \mathbb{I}$ by
\begin{equation}\label{sss}
F(\textbf{t},x)=(S(\textbf{t}),f(\textbf{t},x))=(S(\textbf{t}),f_{t_0}(x)),
\end{equation}
for each $\textbf{t}=(\dots, t_{-n}, \dots, t_0)\in \Omega$, where $f_{t_0}$ is given by (\ref{is}).
Note that the inverse map $F^{-1}$ is given by
$$F^{-1}(\mathbf{t},x)=(\dots, t_{-2}, t_{-1}, (f_{t_{-1}})^{-1}(x)).$$
Then $\mathbf{p}\times id$ is a semi conjugacy between $\widetilde{F}$ and $F$.
\begin{remark}\label{ef}
By construction, the fiber maps of $\widetilde{F}$ and its extension $F$ are the same.
\end{remark}
 In the rest of this paper, we fix the skew product $F$ with the fiber maps $f_{\textbf{t}}$, $\textbf{t}\in \Omega$.
By construction, for each $1\leq i\leq n$, the compact set $\Omega \times I_i $ is a trapping region for $F$.
Let us take
\begin{equation}\label{max}
 A_{max}(F):=\bigcap_{n\geq 0}F^n(\Omega \times \mathbb{I}), \ \text{and} \ \Delta_i:=\bigcap_{n\geq 0}F^n(\Omega \times I_i), \ 1\leq i\leq n.
\end{equation}
Then $\Delta_i$, $1\leq i\leq n$, are maximal attractors of $F$ corresponding to the trapping regions $\Omega \times I_i $ and the union $K:=\bigcup_{i=1}^n \Delta_i$ is a compact invariant set contained in the global attractor $A_{max}(F)$.

Let us take
\begin{equation}\label{rest}
F_i:=F|_{\Omega \times I_i}, \ i=1, \dots, n.
\end{equation}

\begin{lemma}\label{lemma0}
The skew product $F_i$ satisfies the non-uniformly contraction condition given by Definition \ref{av} for each, $i=1, \dots,n$.
\end{lemma}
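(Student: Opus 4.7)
Fix $i\in\{1,\dots,n\}$. The fibre map of $F_i$ depends on $\mathbf t=(\dots,t_{-1},t_0)$ only through $t_0=\mathbf p(\mathbf t)$, so we write $f_{\mathbf t}=f_{t_0}$ as in (\ref{sss}). Because each $f_t$ is $C^1$ and monotone increasing, the mean value theorem identifies
\[
\sup_{x\ne x'\in I_i}\frac{|f_{\mathbf t}^{n}(x)-f_{\mathbf t}^{n}(x')|}{|x-x'|}=\sup_{\xi\in I_i}Df_{\mathbf t}^{n}(\xi),
\]
so the quantity $\lambda(\mathbf t)$ in Definition \ref{av} equals $\lim_n\frac{1}{n}a_n(\mathbf t)$ with $a_n(\mathbf t):=\log\sup_{\xi\in I_i}Df_{\mathbf t}^{n}(\xi)$. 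Combining the chain rule with the forward invariance $f_{\mathbf t}^{n}(I_i)\subset I_i$ shows that $(a_n)$ is subadditive along the base, i.e.\ $a_{n+m}(\mathbf t)\le a_n(\mathbf t)+a_m(S^n\mathbf t)$. Since $(\Omega,\nu,S)$ is ergodic (Remark \ref{e}), Kingman's subadditive ergodic theorem yields $\nu$-a.e.\ convergence of $\frac{1}{n}a_n$ to an ergodic constant $\lambda_i\in[-\infty,+\infty)$; the task reduces to proving $\lambda_i<0$.

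Introduce the auxiliary function
\[
H_i(t):=\max_{x\in I_i}\log Df_t(x),\qquad t\in\mathbb T^1.
\]
Since $Df_t=(1-\ell(t)^{2})Df_0+\ell(t)^{2}Df_1$ is a convex combination of derivatives bounded by $1$ on $I_i$ (condition~(2) of $s$-weak contractivity applied to $f_0,f_1$), one has $H_i\le 0$. Property (\ref{de}) of the construction forces $Df_t(x)<1$ for every $x\in I_i$ whenever $t\in\mathbb T^1\setminus B_\delta(L_0\cup L_1)$; by continuity of $(t,x)\mapsto Df_t(x)$ and compactness of $I_i$ together with this complement, the strict inequality upgrades to $H_i\le -c<0$ on $\mathbb T^1\setminus B_\delta(L_0\cup L_1)$ for some $c>0$. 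Because $\nu^+$ is equivalent to Lebesgue on $\mathbb T^1$ (Remark \ref{e}), this complement carries positive $\nu^+$-mass, so
\[
\int_{\mathbb T^1}H_i\,d\nu^+\le -c\,\nu^+\bigl(\mathbb T^1\setminus B_\delta(L_0\cup L_1)\bigr)<0.
\]

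To convert this into the bound on $\lambda_i$, expand by the chain rule: for every $\xi\in I_i$,
\[
\log Df_{\mathbf t}^{n}(\xi)=\sum_{k=0}^{n-1}\log Df_{\mathbf p(S^k\mathbf t)}\!\bigl(f_{\mathbf t}^{k}(\xi)\bigr)\le\sum_{k=0}^{n-1}(H_i\circ\mathbf p)(S^k\mathbf t),
\]
and taking the supremum over $\xi$ preserves the inequality, giving $a_n(\mathbf t)\le\sum_{k=0}^{n-1}(H_i\circ\mathbf p)(S^k\mathbf t)$. Since $H_i\circ\mathbf p$ is bounded and $\mathbf p_\ast\nu=\nu^+$, Birkhoff's ergodic theorem yields
\[
\lambda_i\le\lim_{n\to\infty}\frac{1}{n}\sum_{k=0}^{n-1}(H_i\circ\mathbf p)(S^k\mathbf t)=\int_{\mathbb T^1}H_i\,d\nu^+<0
\]
for $\nu$-a.e.\ $\mathbf t$, which is exactly condition (\ref{neg}) with $\lambda:=\int H_i\,d\nu^+$. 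The only delicate point is extracting a strictly negative integral from the crude sup-chain-rule bound; this succeeds precisely because the construction builds a uniform quantitative gap into (\ref{de}), so no synchronisation or bounded-distortion argument is required at this stage.
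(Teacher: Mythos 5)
Your proof is correct, and it shares the paper's skeleton up to a point: both arguments reduce the problem to a subadditive sequence (you derive subadditivity directly from the mean value theorem, the chain rule and forward invariance of $I_i$, where the paper instead cites Stark's Lemmas 5.1 and 5.3 for measurability and submultiplicativity of $\varsigma_{m,i}$), and both invoke Kingman's theorem plus ergodicity of $\nu$ to get an a.e.\ constant limit. Where you genuinely diverge is in proving that this constant is negative. The paper argues qualitatively: for $\nu^+$-a.e.\ $t_0$ the forward $\omega$-orbit is dense (citing a Diophantine/equidistribution fact and the equivalence of $\nu^+$ with Lebesgue), hence visits $\mathbb{T}^1\setminus B_\delta(L_0\cup L_1)$ at least once, so some composition contains a uniformly contracting factor while all other factors are non-expanding; combined with the characterization of the Kingman limit as $\inf_m \frac1m\log\varsigma_{m,i}$, a single such visit already forces the limit below zero. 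You instead introduce the continuous ceiling function $H_i(t)=\max_{x\in I_i}\log Df_t(x)$, show $H_i\le 0$ everywhere and $H_i\le -c<0$ off the $\delta$-neighborhood by compactness, and apply Birkhoff's theorem to $H_i\circ\mathbf p$ to obtain the explicit bound $\lambda_i\le\int H_i\,d\nu^+<0$. Your route buys a quantitative value for the constant $\lambda$ in Definition \ref{av} and avoids any appeal to orbit density or to external lemmas; the paper's route needs only one visit to the good region rather than positive visit frequency, but its final step ("there exists an upper bound $\lambda<0$") is left more implicit than yours. Both correctly rest on (\ref{de}), the convex-combination form of $Df_t$, and the equivalence of $\nu^+$ with Lebesgue from Remark \ref{e}.
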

\begin{proof}
By construction, for each $\textbf{t} \in \Omega$, the fiber map $f_\textbf{t}$ of $F$ is a $C^2$ diffeomorphism. Hence, there exists a $\varsigma \in L^1(\Omega,\nu)$
such that $d(f_\textbf{t}(x),f_\textbf{t}(x^{\prime}))<\varsigma(\textbf{t})d(x,x^{\prime})$, for $\nu$-almost every $\textbf{t}$. Then, by \cite[Lem.~5.1]{Stark2}, for all $m\geq 0$
\begin{equation}\label{n-lya}
\varsigma_{m,i}(\textbf{t})=\sup_{x,x^{\prime}\in I_i}\frac{d(f_\textbf{t}^m(x), f_\textbf{t}^m(x^{\prime}))}{d(x,x^{\prime})}, \ i=1, \cdots,n
\end{equation}
exists for almost every $\textbf{t}$ and it is measurable, where $f^m_{\textbf{t}}=f_{S^{m-1}(\textbf{t})}\circ \ldots \circ f_{S(\textbf{t})}\circ f_\textbf{t}$. Also, by \cite[Lem.~5.3]{Stark2}, the sequence $\varsigma_{m,i}$ is submultiplicative. It immediately follows that $\log \varsigma_{m,i}$ is subadditive, that is $\log \varsigma_{m+k,i}(\textbf{t})\leq \log \varsigma_{m,i}(\textbf{t})+ \log \varsigma_{k,i}(S^m(\textbf{t}))$.
We can thus apply the Kingman's subadditive ergodic theorem \cite[Thm.~5.4]{Kr} to
$\log \varsigma_{m,i}$ to deduce that the limit
\begin{equation}\label{lya}
\lambda_i(\textbf{t})=\lim_{m \to +\infty}\frac{1}{m}\log \varsigma_{m,i}(\textbf{t}),
\end{equation}
exists for $\nu$-almost every $\textbf{t}$ and is $S$ invariant. Moreover,
\begin{equation}\label{44}
\lim_{m \to \infty}\frac{1}{m}\textnormal{log} \varsigma_{m,i}(\textbf{t})= \inf_{m}\frac{1}{m}\textnormal{log} \varsigma_{m,i}(\textbf{t})
\end{equation}
 and it is constant by the ergodicity of $\nu$, denoted by $\lambda_i(\nu)$.
 Note that, by definition, for each
 $t=(\dots, t_{-1}, t_0)\in \Omega$, $$f^m_{\textbf{t}}=f_{S^{m-1}(\textbf{t})}\circ \dots \circ f_{S(\textbf{t})}\circ f_\textbf{t}=f_{\omega^{m-1}(t_0)}\circ \dots \circ f_{\omega(t_0)}\circ f_{t_0}.$$

On the other hand, by (\ref{de}), for each $\textbf{t}=(\ldots, t_{-1}, t_0) \in \Omega$ with $t_0 \in \mathbb{T}^1 \setminus B_\delta(L_0 \cup L_1)$ and every $x \in I_i$, we have $\|Df_\textbf{t}(x)\|<1$.
It is known that \cite{BY} for Lebesgue almost all $t_0$ the set $\{\omega^n(t_0): n \in \mathbb{N}\}$ is dense in $\mathbb{T}^1$. Since, $\nu^+$ is equivalent to Lebesgue, the same holds for $\nu^+$-almost
all $t_0$. Hence, for $\nu^+$-almost all $t_0$, there exists $n \in \mathbb{N}$ such that $\omega^n(t_0) \in \mathbb{T}^1 \setminus B_\delta(L_0 \cup L_1)$. This implies that $f_{\omega^n(t_0)}$
is a uniformly contracting map.
By these facts, since $\nu$ inherited from $\nu^+$, there exists an upper bound $\lambda <0$ such that $\lambda_i(\nu)\leq \lambda <0$ (note that for each $t$,
by definition, the fiber map $f_t$ is either weakly contractive or uniformly contracting).
Hence, $F_i$ satisfies the non-uniformly contraction condition for each $i=1, \dots, n$.
\end{proof}
Given a small $\varepsilon>0$, take a $\varepsilon$-neighborhood $\mathcal{U}$ of $F$ with respect to the metric defined by (\ref{m}).
Taking $\varepsilon$ small enough such that for each $G \in \mathcal{U}$ and for all $i=1, \dots, n$, the region $\Omega \times I_i$ is a trapping region for $G|_{\Omega \times I_i}$.
For each $G\in \mathcal{G}$, we take
\begin{equation}\label{rest1}
G_i:=G|_{\Omega \times I_i}, \ i=1, \dots, n.
\end{equation}
\begin{lemma}\label{cor0}
Let the $\varepsilon$-neighborhood $\mathcal{U}$ of $F$ defined above be small enough. Then for every perturbed skew product $G \in \mathcal{U}$, the restricted skew products $G_i$, $i=1, \dots,n$,
given by (\ref{rest1}) satisfy the non-uniformly contraction condition given by Definition \ref{av}.
\end{lemma}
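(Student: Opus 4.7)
The plan is to repeat the argument of Lemma \ref{lemma0} for $G_i$ in place of $F_i$, adding a robustness step that exploits the $C^2$-closeness provided by the metric \eqref{m}. For each $G\in\mathcal{U}$ with fiber maps $g_{\textbf{t}}$ and for each $i=1,\dots,n$, I would first define the analogue
\[
\varsigma^{G}_{m,i}(\textbf{t})=\sup_{x,x'\in I_i}\frac{d(g_{\textbf{t}}^m(x),g_{\textbf{t}}^m(x'))}{d(x,x')},
\]
and check, exactly as in the proof of Lemma \ref{lemma0} via \cite[Lem.~5.1, Lem.~5.3]{Stark2}, that $\varsigma^{G}_{m,i}$ is measurable and submultiplicative along $S$-orbits, so that $\log\varsigma^{G}_{m,i}$ is subadditive and integrable with respect to $\nu$. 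Kingman's subadditive ergodic theorem then produces an $S$-invariant limit
\[
\lambda^{G}_{i}(\textbf{t})=\lim_{m\to\infty}\frac{1}{m}\log\varsigma^{G}_{m,i}(\textbf{t})=\inf_{m\ge 1}\frac{1}{m}\log\varsigma^{G}_{m,i}(\textbf{t}),
\]
which is constant $\nu$-a.e.\ by ergodicity; call this constant $\lambda^{G}_{i}(\nu)$. The remaining task is therefore to show $\lambda^{G}_{i}(\nu)<0$ whenever $\varepsilon$ is small enough.

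To this end I would use Lemma \ref{lemma0} together with the inf-characterisation above. Fix $i$ and set $\lambda_i:=\lambda_i(\nu)<0$. By the subadditive theorem applied to $F$, one has $\inf_{m\ge 1}\tfrac{1}{m}\log\varsigma^{F}_{m,i}(\textbf{t})=\lambda_i$ $\nu$-a.e., so there exists an integer $m_0$ and a set $E\subset\Omega$ of positive $\nu$-measure on which $\tfrac{1}{m_0}\log\varsigma^{F}_{m_0,i}(\textbf{t})\le \tfrac{\lambda_i}{2}$. Now exploit the metric \eqref{m}: because $G\in\mathcal{U}$ satisfies $\sup_{\textbf{t}}\text{dist}_{C^2}(f^{\pm1}_{\textbf{t}},g^{\pm1}_{\textbf{t}})<\varepsilon$, the compositions $g^{m_0}_{\textbf{t}}$ and $f^{m_0}_{\textbf{t}}$ are $C^1$-close on the compact interval $I_i$, uniformly in $\textbf{t}\in\Omega$, with an error that can be made arbitrarily small by shrinking $\varepsilon$. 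Consequently $|\log\varsigma^{G}_{m_0,i}(\textbf{t})-\log\varsigma^{F}_{m_0,i}(\textbf{t})|$ can be made smaller than $|\lambda_i|m_0/4$ uniformly in $\textbf{t}$; hence on the positive-measure set $E$ we obtain $\tfrac{1}{m_0}\log\varsigma^{G}_{m_0,i}(\textbf{t})\le \tfrac{3\lambda_i}{4}<0$. By the inf-characterisation, $\lambda^{G}_{i}(\nu)\le \tfrac{3\lambda_i}{4}<0$, and this verifies Definition \ref{av} for $G_i$.

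The main obstacle will be the uniform-in-$\textbf{t}$ control of $\log\varsigma^{G}_{m_0,i}-\log\varsigma^{F}_{m_0,i}$ at the fixed finite horizon $m_0$. This step is where the $C^2$-metric \eqref{m} (which is uniform in $\theta$) and the compactness of $I_i$ are essential: they prevent the error in the derivative of the composition from blowing up with $\textbf{t}$. Once $m_0$ has been chosen to exhibit the subadditive-limit behaviour of $F$, the perturbation radius $\varepsilon$ can be shrunk to absorb the $m_0$-fold error, so that the negative sign of $\lambda^{G}_{i}(\nu)$ is inherited from that of $\lambda_i(\nu)$ obtained in Lemma \ref{lemma0}.
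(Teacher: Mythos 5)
Your proposal is correct and follows essentially the same route as the paper: define the Lipschitz quotients $\zeta_{m,i}$ for $G$, invoke Stark's lemmas and Kingman's subadditive ergodic theorem to get the a.e.\ constant $\inf$-characterisation of $\lambda_i(\nu,G)$, and then transfer the strict negativity from $F$ to $G$ at a fixed finite horizon $m_0$ using the uniform $C^2$-closeness, concluding via $\lambda_i(\nu,G)\le\frac{1}{m_0}\log\zeta_{m_0,i}(\textbf{t})$. If anything, your version orders the quantifiers slightly more carefully than the paper (fixing $m_0$ on a positive-measure set before shrinking $\varepsilon$, rather than letting $m_0$ depend on the typical point $\textbf{t}$), but the underlying argument is the same.
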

\begin{proof}
By Lemma \ref{lemma0}, for each $i=1, \dots, n$, the skew product $F_i$ satisfies the non-uniformly contraction condition.
So, by (\ref{44}), for $\nu$-almost $\textbf{t}$, the limit $\lim_{m \to \infty}\frac{1}{m}\textnormal{log} \varsigma_{m,i}(\textbf{t})= \inf_{m}\frac{1}{m}\textnormal{log} \varsigma_{m,i}(\textbf{t})$
exists and it is constant,  by the ergodicity of $\nu$, denoted by $\lambda_i(\nu)$; in particular, $\lambda_i(\nu)<0$.
Take
\begin{equation}\label{la}
 \lambda=\max\{\lambda_i(\nu): i=1, \dots, n\}.
\end{equation}
Clearly, $\lambda <0$.
Take a skew product $G \in \mathcal{U}$. Let $G$ defined by $G(\textbf{t},x)=(S(\textbf{t}),\textbf{g}(\textbf{t},x))$. Then, by definition of the metric given by $(\ref{m})$,
$\sup_{\textbf{t}} dist_{C^2}(f_\textbf{t}^{\pm}, g_\textbf{t}^{\pm})< \varepsilon$.
Take
\begin{equation}\label{per}
\zeta_{m,i}(\textbf{t}):=\sup_{x,x^{\prime}\in I_i}\frac{d(g_\textbf{t}^m(x), g_\textbf{t}^m(x^{\prime}))}{d(x,x^{\prime})},
\end{equation}
for each $i=1, \dots, n$.
Since, for each $\textbf{t}$, the fiber map $g_\textbf{t}$ is a $C^2$ diffeomorphism, there exists a $\varsigma \in L^1(\Omega,\nu)$
such that $d(g_\textbf{t}(x),g_\textbf{t}(x^{\prime}))<\varsigma(\textbf{t})d(x,x^{\prime})$, for $\nu$-almost every $\textbf{t}$. Then, by \cite[Lem.~5.1]{Stark2}, for all $m\geq 0$,
$\zeta_{m,i}(\textbf{t})$ exists for almost every t and it is measurable.
Also, by \cite[Lem.~5.3]{Stark2}, the sequence $\zeta_{m,i}$ is submultiplicative and $\log \zeta_{m,i}$ is subadditive.
By the Kingman's subadditive ergodic theorem \cite[Thm.~5.4]{Kr}, the limit
\begin{equation}\label{lya}
\lambda_i(\textbf{t}, G)=\lim_{m \to +\infty}\frac{1}{m}\log \zeta_{m,i}(\textbf{t}),
\end{equation}
exists for $\nu$-almost every $\textbf{t}$ and is $S$ invariant. Moreover,
\begin{equation}\label{t44}
\lim_{m \to \infty}\frac{1}{m}\textnormal{log} \zeta_{m,i}(\textbf{t})= \inf_{m}\frac{1}{m}\textnormal{log} \zeta_{m,i}(\textbf{t})
\end{equation}
 and it is constant by the ergodicity of $\nu$, denoted by $\lambda_i(\nu, G)$.
 We need to see that this number is negative.

Fix small $\delta$ with $\lambda+\delta <0$.
By (\ref{44}) and (\ref{t44}), $\lambda_i(\nu)=\inf_{n}\frac{1}{m}\textnormal{log} \varsigma_{m,i}(\textbf{t})$ and $\lambda_i(\nu, G)=\inf_{m}\frac{1}{m}\textnormal{log} \zeta_{m,i}(\textbf{t})$, for $\nu$-a.e. $\textbf{t}$. Thus, for a typical point
$\textbf{t}$ for $\lambda_i(\nu)$ and $\lambda_i(\nu, G)$,
there exists $m_0=m_0(\textbf{t}) \in \mathbb{N}$
such that $\frac{1}{m_0}\textnormal{log} \varsigma_{m_0,i}(\textbf{t})< \lambda+\delta$, where $\lambda$ is given by (\ref{la}).
Taking $\varepsilon$ small enough, we get $\frac{1}{m_0}\textnormal{log} \zeta_{m_0,i}(\textbf{t})< \lambda+\delta <0$.
Consequently,
$$
\lambda_i(\nu, G)=\inf_{m}\frac{1}{m}\textnormal{log} \zeta_{m,i}(\textbf{t})< \frac{1}{m_0}\textnormal{log} \zeta_{m_0,i}(\textbf{t})< \lambda+\delta <0.
$$
Thus $G_i$ satisfies the non-uniformly contraction condition.
\end{proof}
Note that, for each $1\leq i\leq n$, the compact set $\Omega \times I_i $ is a trapping region for each $G \in \mathcal{U}$.
Let us take
\begin{equation}\label{maxi}
 A_{max}(G):=\bigcap_{n\geq 0}G^n(\Omega \times I), \ \text{and} \ \Delta_i(G):=\bigcap_{n\geq 0}G^n(\Omega \times I_i), \ 1\leq i\leq n.
\end{equation}
Then $\Delta_i(G)$, $1\leq i\leq n$, are maximal attractors corresponding to the trapping regions $\Omega \times I_i $ and the union $K(G):=\bigcup_{i=1}^n \Delta_i(G)$ is a compact invariant set contained in the global attractor $A_{max}(G)$.

Applying Theorem \ref{thmg} we get the next result (see also\cite[Thm.~5]{C2}, \cite[Thm.~1.4]{Stark2} or \cite[Pro.~2.3]{BHN}).
\begin{theorem}\label{thm000}
Let $G\in \mathcal{U}$, where the open set $\mathcal{U}$ is given by Lemma \ref{cor0}. Then there exist $S$-invariant sets $\Omega_i(G)\subseteq \Omega$, $i=1, \dots,n$, such that $\nu(\Omega_i(G))=1$
and measurable functions $\gamma_{G,i}:\Omega_i(G)\to I_i$ such that $\Gamma_{G,i}$, the graphs of $\gamma_{G,i}$, are invariant under $G$.
Furthermore $\Gamma_{G,i}$, $i=1, \dots,n$, are attracting $(G,\nu)$ invariant graphs.
\end{theorem}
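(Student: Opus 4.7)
The plan is to apply Stark's theorem (Theorem \ref{thmg}) separately to each of the $n$ restricted skew products $G_i := G|_{\Omega \times I_i}$ defined in (\ref{rest1}), using Lemma \ref{cor0} to supply the non-uniform contraction hypothesis. Because the $I_i$ are pairwise disjoint trapping regions, the $n$ graphs arising this way will automatically be distinct.

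First I would verify the hypotheses of Theorem \ref{thmg} for each $G_i$. The base $\Omega$, as the inverse limit of the compact metric space $\mathbb{T}^1$ under the continuous map $\omega$, is itself a compact metric space in the product topology, and $S$ is a homeomorphism on it; by Remark \ref{e} the measure $\nu$ inherited from the absolutely continuous invariant measure $\nu^+$ of $\omega$ is $S$-invariant and ergodic. Each fiber $I_i = [a_i,b_i]$ is complete as a closed bounded interval, and the map $(\textbf{t},x) \mapsto G(\textbf{t},x)$ is continuous by Definition \ref{generalskew}(3). Because $\varepsilon$ was chosen small enough that $\Omega \times I_i$ is a trapping region for every $G \in \mathcal{U}$, the restriction $G_i$ is a well-defined continuous skew product with fiber space $I_i$, and by Lemma \ref{cor0} it satisfies the non-uniform contraction condition of Definition \ref{av} with a strictly negative upper bound $\lambda_i(\nu,G) < 0$.

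Applying Theorem \ref{thmg} to each $G_i$ then produces an $S$-invariant set $\Omega_i(G) \subseteq \Omega$ of full $\nu$-measure together with a measurable function $\gamma_{G,i} : \Omega_i(G) \to I_i$ whose graph $\Gamma_{G,i}$ is $G_i$-invariant and attracting under $G_i$. Since $G_i$ agrees with $G$ on the trapping region $\Omega \times I_i$, the graph $\Gamma_{G,i}$ is automatically a $(G,\nu)$-invariant graph in the sense of Definition \ref{def:graph}, and it remains attracting with respect to the pushforward measure $\nu_{\gamma_{G,i}}$ defined by (\ref{m}). Pairwise disjointness of the $I_i$ guarantees that the $n$ resulting graphs $\Gamma_{G,1},\dots,\Gamma_{G,n}$ are genuinely distinct.

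The real content has already been packaged into Lemma \ref{cor0}, so I anticipate no serious obstacle here. The only point requiring care is the need to restrict to $\Omega \times I_i$ before invoking Theorem \ref{thmg}: one cannot apply Stark's theorem directly to $G$ on the full space $\Omega \times \mathbb{I}$, since the global system need not satisfy a non-uniform contraction estimate, and indeed one expects $n$ disjoint basins, one inside each $\Omega \times I_i$. Restricting and then collecting the $n$ resulting graphs is the conceptually clean way to proceed.
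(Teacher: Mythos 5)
Your proposal is correct and follows exactly the route the paper takes: the paper derives Theorem \ref{thm000} by applying Stark's Theorem \ref{thmg} to each restricted skew product $G_i$ on the trapping region $\Omega \times I_i$, with Lemma \ref{cor0} supplying the non-uniform contraction hypothesis. Your write-up merely makes explicit the verification of the hypotheses (compactness of $\Omega$, $S$ a homeomorphism, ergodicity of $\nu$, completeness of $I_i$) that the paper leaves implicit.
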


For the skew product $F$  and any small perturbation $G \in \mathcal{U}$, we define measures $\nu_{\gamma_{F,i}}$ and $\nu_{\gamma_{G,i}}$ on $\Omega \times I_i$ by
\begin{equation}\label{meas}
 \nu_{\gamma_{F,i}}:=\nu\circ(id\times \gamma_{F,i})^{-1}|_{\Omega \times I_i}, \ \  \nu_{\gamma_{G,i}}:=\nu\circ(id\times \gamma_{G,i})^{-1}|_{\Omega \times I_i},
\end{equation}
for each $i=1, \dots,n$. Since $\nu$ is ergodic, so they are also ergodic. In particular, they are supported on the maximal attractors $\Delta_i(F)$ and $\Delta_i(G)$, respectively.

Note that for the case $n=1$, i.e. we have only one invariant graph, you can see more details of construction and the proofs in \cite{ZG}.

\begin{definition}[Bony attracting graph and bony multi-graph]
Let $G$ be a skew product with interval fiber maps and an invertible base
map $S$.
\begin{enumerate}
  \item [$(1)$] An attracting bony graph is an attracting invariant graph that is the union of the graph of a continuous function $\gamma$ defined on some set of full
measure and a set of vertical closed intervals (bones) contained in the closure of the
graph, see \cite{KV,ZG}. If the graph function $\gamma$ defined on the whole space $\Omega$, then we call the graph of $\gamma$ a continuous attracting invariant graph.
  \item [$(2)$] A compact invariant set $K$ is a bony multi-graph for $G$ if $K$ is a multi-graph composed of the finite union of attracting invariant graphs
and at least one of the invariant graphs contained in $K$ is an attracting bony graph.
 \end{enumerate}
 \end{definition}
 In \cite[Theorem~1]{ZG}, we investigated a certain class of skew products whose fiber maps are increasing $C^2$-interval maps over the base map $(\Omega,S)$.
We proved the existence of an open set of skew products such that any skew product $G$ belonging to this set admits a unique attracting invariant graph for which the
following dichotomy is ascertained. This invariant graph is either a continuous attracting graph or an attracting bony graph.
In both cases they carry an SRB measure. Here, we extend the result to the case that $G$ admits more than one attracting invariant graph.

The next result can be followed by applying \cite[Theorem 1]{ZG} to our setting by any modification (see also \cite[Lemma~2.6]{ZG}).
\begin{theorem}\label{thm88}
Given a skew product system $G$ belonging to $\mathcal{U}$, the maximal attractors $\Delta_i(G)$, $i=1, \dots, n$, defined by (\ref{maxi}), satisfy the following properties:
\begin{enumerate}
  \item [$(1)$] The maximal attractor $\Delta_i(G)$ is either a continuous attracting invariant graph or an attracting bony graph.
In the case $\Delta_i(G)$ is an attracting bony graph, the graph function $\gamma_{G,i}$ defined on a subset $\Omega_i(G)\subseteq \Omega$
with total measure and there exists a family of vertical closed intervals (bones), one bone in each
fiber $\textbf{t} \times I_i$, with $\textbf{t} \in \Omega \setminus \Omega_i(G)$; in particular, the bones are contained in the closure
of the graph $\Gamma_{G,i}$ and $\text{Cl}(\Gamma_{G,i})=\Delta_i(G)$.
  \item [$(2)$] The invariant ergodic measures $\nu_{\gamma_{G,i}}$, $i=1, \dots,n$, given by (\ref{meas}), supported on the closure of $\Gamma_{G,i}$, are SRB measures.
\end{enumerate}
\end{theorem}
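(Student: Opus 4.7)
The plan is to reduce the statement to the single-attractor case already established in \cite[Theorem~1]{ZG} by restricting $G$ to each trapping region $\Omega \times I_i$ separately. Since $F(\Omega \times I_i) \subset \Omega \times \text{int}(I_i)$ and this persists for all $G$ sufficiently $C^2$-close to $F$, each restricted map $G_i = G|_{\Omega \times I_i}$ is itself a well-defined skew product of the type considered in \cite{ZG}, over the same invertible base $(\Omega, S, \nu)$ and with fiber maps $g_{\textbf{t}}|_{I_i}$ that are $C^2$-close to $f_{\textbf{t}}|_{I_i}$.

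First I would invoke Lemma \ref{cor0} to guarantee that each $G_i$ satisfies the non-uniform contraction condition and therefore, by Theorem \ref{thm000}, admits an attracting $(G,\nu)$-invariant graph $\gamma_{G,i}$ defined on an $S$-invariant full-measure set $\Omega_i(G) \subseteq \Omega$. I would then verify that the weak-pair structure of $(f_0|_{I_i}, f_1|_{I_i})$ and the covering property from Remark \ref{cov} persist under small $C^2$ perturbation, so that each $G_i$ inherits an analogous covering property on some subinterval $B_i \subset \text{int}(I_i)$; this is where the openness of the hypotheses is used, and it makes \cite[Theorem~1]{ZG} directly applicable to $G_i$.

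With these ingredients in place, I would apply \cite[Theorem~1]{ZG} to each $G_i$ independently. That result yields the announced dichotomy for $\Delta_i(G)$: either the graph function $\gamma_{G,i}$ extends continuously to all of $\Omega$, producing a continuous attracting invariant graph; or the closure $\text{Cl}(\Gamma_{G,i})$ completes to a vertical closed interval in each fiber $\{\textbf{t}\} \times I_i$ with $\textbf{t} \in \Omega \setminus \Omega_i(G)$, producing an attracting bony graph with $\text{Cl}(\Gamma_{G,i}) = \Delta_i(G)$. For assertion $(2)$, the SRB property of $\nu_{\gamma_{G,i}}$ follows by the same argument as in \cite{ZG}: the ergodic measure $\nu$ on $\Omega$ descends from the Lebesgue-equivalent measure $\nu^+$ on $\mathbb{T}^1$ (Remark \ref{e}), and non-uniform fiber contraction guarantees that for a positive Lebesgue-measure set of initial conditions $(\textbf{t},x) \in \Omega \times I_i$ one has fiberwise synchronization $|g_{\textbf{t}}^n(x) - \gamma_{G,i}(S^n\textbf{t})| \to 0$, which combined with the Birkhoff ergodic theorem applied to $\nu_{\gamma_{G,i}}$ yields the defining property of an SRB measure in the sense of Definition \ref{def:skew}.

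The main obstacle lies in the bony case: rigorously verifying that $\text{Cl}(\Gamma_{G,i})$ actually fills out to full vertical bones over every $\textbf{t}$ in the exceptional set, and that this exceptional set has $\nu$-measure zero. This is precisely where the covering condition $\text{Cl}(B_i) \subset g_0(B_i) \cup g_1(B_i)$ is indispensable, since it forces $\bigcap_{n \geq 0} G^n(\Omega \times B_i)$ to remain a non-trivial compact invariant set whose fibers over exceptional $\textbf{t}$ are genuine non-degenerate intervals rather than isolated points; together with the non-uniform contraction, the extra measurable selections produced by the different inverse branches of the base $\omega$ cannot coalesce into a single continuous section. This is the content of \cite[Lemma~2.6]{ZG}, and adapting it region-by-region to the finitely many $I_i$ is routine once one has checked that the weak-pair/covering structure is inherited on each $I_i$ separately, which is already guaranteed by the construction of $F$ in Section 3.1.
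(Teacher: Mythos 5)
Your proposal takes essentially the same route as the paper: the paper proves this theorem simply by asserting that it "can be followed by applying \cite[Theorem 1]{ZG} to our setting" (with a pointer to \cite[Lemma~2.6]{ZG}), i.e.\ by restricting $G$ to each trapping region $\Omega\times I_i$ and invoking the single-attractor dichotomy from \cite{ZG}, which is exactly your reduction. Your write-up in fact supplies more of the supporting detail (persistence of the weak-pair/covering structure under $C^2$-perturbation, the synchronization argument for the SRB property) than the paper itself records.
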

However, weakly contractive fiber maps prevent the existence
of bones in the skew product, but bones appear in small perturbations of the original system.
By \cite[Lemma~2.5]{ZG}, we get the next lemma. It shows that the set of $G \in \mathcal{U}$ having an attracting bony graph is nonempty.
\begin{lemma}\label{G}
There exists a skew product $G \in \mathcal{U}$ and an index $1\leq i\leq n$ such that $\Delta_i(G)$ is the closure of an attracting bony graph.
In particular, the subset of bones has the cardinality of the continuum and is dense in the attractor.
\end{lemma}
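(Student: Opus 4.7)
The plan is to produce an explicit perturbation $G \in \mathcal{U}$ of $F$ that creates a bone in some maximal attractor $\Delta_{i_0}(G)$, and then to use the solenoid structure of $\Omega$ to propagate bones densely and in continuum cardinality. The construction follows the strategy of \cite[Lemma~2.5]{ZG}, applied inside a single invariant interval $I_{i_0}$.

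First, I would fix $i_0 \in \{1,\ldots,n\}$ and perturb $f_0$ in an arbitrarily small $C^2$-neighborhood of its weak attracting fixed point $p^0_{i_0}$, producing a map $\tilde{f}_0$ that agrees with $f_0$ outside this neighborhood and admits a non-trivial closed invariant interval $J_0 \subset \operatorname{int}(I_{i_0})$ consisting of fixed points of $\tilde{f}_0$. Such a flattening can be arranged with arbitrarily small $C^2$-norm because $Df_0(p^0_{i_0}) = 1$, the condition $Df_0(x) < 1$ for $x \neq p^0_{i_0}$ forces $D^2 f_0(p^0_{i_0}) = 0$, and therefore the graph of $f_0$ is tangent to the diagonal to at least second order at $p^0_{i_0}$. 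Substituting $\tilde{f}_0$ for $f_0$ in the isotopy (\ref{is}) on $L_0$ and extending via (\ref{sss}) produces a perturbed skew product $G$. If the perturbation is small enough then $G \in \mathcal{U}$, and Lemma \ref{cor0} guarantees that each restriction $G_i$ still satisfies the non-uniform contraction condition on $I_i$.

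Next, let $\mathcal{K} \subset \Omega$ be the set of pre-orbits $\textbf{t}^* = (\ldots,t^*_{-1},t^*_0)$ of $\omega$ with $t^*_{-k} \in L_0$ for every $k \geq 0$. Since $\omega|_{L_0}$ is a homeomorphism onto $\mathbb{T}^1$, each $t^*_0 \in L_0$ determines a unique such pre-orbit, so $\#\mathcal{K}$ equals the cardinality of the continuum; moreover $\nu(\mathcal{K}) = 0$ because under the symbolic coding of $(\Omega,\nu,S)$ the set $\mathcal{K}$ corresponds to sequences with past equal to a single fixed letter. For every $\textbf{t}^* \in \mathcal{K}$ the fibered iterate $g^n_{\textbf{t}^*}$ equals $\tilde{f}_0^n$ and hence fixes $J_0$, so $\{\textbf{t}^*\} \times J_0 \subset \Delta_{i_0}(G)$ is a bone. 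Combined with the invariant-graph assertion of Theorem \ref{thm000}, this identifies $\Delta_{i_0}(G)$ as an attracting bony graph. Since each fiber map is a $C^2$ increasing homeomorphism, $G^n(\{\textbf{t}^*\} \times J_0)$ is a non-trivial vertical interval above $S^n(\textbf{t}^*)$ in $\Delta_{i_0}(G)$, so every point of the forward saturation $\bigcup_{n \geq 0} S^n(\mathcal{K})$ is a bone fiber. By topological mixing of $\omega$ on $\mathbb{T}^1$ together with positivity of the Lebesgue measure of $L_0$, this saturation meets every cylinder of $\Omega$ and is therefore dense in $\Omega$; density of the bones in $\Delta_{i_0}(G)$ then follows by closedness of the attractor.

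The main obstacle is to verify that the perturbation actually remains in $\mathcal{U}$, i.e., that the flattening at $p^0_{i_0}$ does not destroy non-uniform contraction on average. This is settled by Lemma \ref{cor0} together with the observation that $\nu^+$-a.e.\ $t_0 \in \mathbb{T}^1$ has its forward $\omega$-orbit visiting the uniformly contracting region $\mathbb{T}^1 \setminus B_\delta(L_0 \cup L_1)$ infinitely often; thus the fiber Lyapunov exponents of $G$ remain strictly negative regardless of the behavior of $\tilde{f}_0$ on $J_0$.
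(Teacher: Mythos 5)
Your construction is correct and is essentially the paper's own argument: the paper offers no proof beyond the citation of \cite[Lemma~2.5]{ZG}, and what you write out --- flattening $f_0$ near the neutral fixed point $p^0_{i_0}$ (where $Df_0=1$ and $D^2f_0=0$ make the $C^2$-cost of creating an interval of fixed points arbitrarily small), taking the continuum of pre-orbits confined to $L_0$ to produce bones, and saturating forward to get density --- is exactly the content of that cited lemma. The only step stated too loosely is the last one: density of the bones in $\Delta_{i_0}(G)$ does not follow from ``closedness of the attractor'' alone, but it does follow by noting that every non-bone fiber $\Delta_{G,i_0,\mathbf{t}}$ is a singleton, so the upper semicontinuity of $\mathbf{t}\mapsto\Delta_{G,i_0,\mathbf{t}}$ from Lemma \ref{lem99} forces the bone over any nearby bone fiber $\mathbf{t}'$ to lie within $\varepsilon$ of that singleton.
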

\begin{lemma}\label{lem99}
There exists an upper semicontinuous extension of the graph function
$\gamma_{\mathbb{G},i}: \Omega_i(G) \to I_i$ to the whole space
$\Omega$.
\end{lemma}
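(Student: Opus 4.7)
The plan is to define the extension as the \emph{upper envelope} of the attractor along each fiber, exploiting the fact that by Theorem \ref{thm88}(1) the closed invariant set $\Delta_i(G) = \text{Cl}(\Gamma_{G,i})$ already contains all candidate values: a single point over $\textbf{t} \in \Omega_i(G)$ and a closed interval (bone) over $\textbf{t} \in \Omega \setminus \Omega_i(G)$. Concretely, for each $\textbf{t} \in \Omega$ set
\begin{equation*}
\widetilde{\gamma}_{G,i}(\textbf{t}) := \sup\bigl\{x \in I_i : (\textbf{t},x) \in \Delta_i(G)\bigr\}.
\end{equation*}
Since $\Delta_i(G) = \bigcap_{n\geq 0} G^n(\Omega \times I_i)$ is a decreasing intersection of nonempty compact sets, its fiber $\Delta_i(G)_{\textbf{t}} := \pi_2(\Delta_i(G) \cap (\{\textbf{t}\} \times I_i))$ is a nonempty compact subset of $I_i$. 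Thus $\widetilde{\gamma}_{G,i}$ is well-defined, takes values in $I_i$, and the supremum is attained.

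Next I would verify that $\widetilde{\gamma}_{G,i}$ genuinely extends $\gamma_{G,i}$. By Theorem \ref{thm88}(1), for every $\textbf{t} \in \Omega_i(G)$ the fiber $\Delta_i(G)_{\textbf{t}}$ reduces to the single point $\{\gamma_{G,i}(\textbf{t})\}$, so $\widetilde{\gamma}_{G,i}(\textbf{t}) = \gamma_{G,i}(\textbf{t})$ on the full-measure set $\Omega_i(G)$.

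For upper semicontinuity, I would show that for every $c \in \mathbb{R}$ the superlevel set $\{\textbf{t} \in \Omega : \widetilde{\gamma}_{G,i}(\textbf{t}) \geq c\}$ is closed. Let $(\textbf{t}_n)$ be a sequence in this set converging to $\textbf{t}$. Since the supremum defining $\widetilde{\gamma}_{G,i}(\textbf{t}_n)$ is attained, the points $(\textbf{t}_n, \widetilde{\gamma}_{G,i}(\textbf{t}_n))$ all lie in the compact set $\Delta_i(G) \subset \Omega \times I_i$. Passing to a subsequence we may assume $\widetilde{\gamma}_{G,i}(\textbf{t}_n) \to y$ for some $y \in I_i$ with $y \geq c$. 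Because $\Delta_i(G)$ is closed, $(\textbf{t},y) \in \Delta_i(G)$, and therefore $\widetilde{\gamma}_{G,i}(\textbf{t}) \geq y \geq c$, as required. Equivalently, this is the standard fact that the upper-envelope function of a compact subset of $\Omega \times I_i$ whose fibers are nonempty is upper semicontinuous.

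The only point that needs a little care is the non-emptiness of $\Delta_i(G)_{\textbf{t}}$ for every $\textbf{t}$ (not just a.e.~$\textbf{t}$), which is what makes the supremum meaningful on all of $\Omega$; this follows from the fact that $\Omega \times I_i$ is trapping under $G$, so the fiber of $G^n(\Omega \times I_i)$ over $\textbf{t}$ is a nested decreasing sequence of nonempty compact intervals, whose intersection is nonempty. No genuine obstacle arises beyond this, since once $\Delta_i(G)$ is known to be compact with nonempty fibers, upper semicontinuity of the fiberwise supremum is automatic from the closedness of $\Delta_i(G)$.
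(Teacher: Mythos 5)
Your argument is correct, and it reaches the conclusion by a genuinely different (and in one respect more robust) route than the paper. The paper also works fiberwise with $\Delta_i(G)\cap(\{\textbf{t}\}\times I_i)=\bigcap_{n}g_{\textbf{t}}\circ g_{S^{-1}(\textbf{t})}\circ\cdots\circ g_{S^{-n}(\textbf{t})}(I_i)$, but it defines the extension as the limit of the images of the \emph{left} endpoint $a_i$ under these backward compositions, and then checks upper semicontinuity by hand: fix a large $n$, use $C^2$-continuity of the finite composition in $\textbf{t}$, and perturb $\textbf{t}$. You instead take the fiberwise supremum of the compact set $\Delta_i(G)$ and deduce upper semicontinuity purely topologically from closedness of $\Delta_i(G)$ (superlevel sets are closed). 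Your choice of the \emph{upper} endpoint is the natural one here: since the set-valued map $\textbf{t}\mapsto\Delta_i(G)_{\textbf{t}}$ is upper semicontinuous, its fiberwise maximum is upper semicontinuous while its minimum is only lower semicontinuous; moreover, the paper's finite-stage estimate controls one term of the \emph{increasing} sequence $g_{\textbf{t}'}\circ\cdots\circ g_{S^{-n}(\textbf{t}')}(a_i)$, which bounds the limit from below rather than from above, so your envelope argument sidesteps a delicate point. What each approach buys: the paper's construction is explicit and dynamical (the extension is realized as a limit of backward orbits of an endpoint, which is convenient elsewhere in the text), whereas yours is shorter, needs only that $\Delta_i(G)$ is compact with nonempty fibers, and generalizes verbatim to any closed invariant set with compact fibers. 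Two small points you should state explicitly: that $\Delta_i(G)$ is compact (continuity of $G$ plus compactness of $\Omega\times I_i$ make each $G^n(\Omega\times I_i)$ compact), and that over $\textbf{t}\in\Omega_i(G)$ the fiber of $\Delta_i(G)$ is the singleton $\{\gamma_{G,i}(\textbf{t})\}$, which is exactly what Theorem \ref{thm88}(1) provides and is what makes your function an extension of $\gamma_{G,i}$ rather than merely an upper bound for it.
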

\begin{proof}
By Theorem \ref{thm88}, there exist $S$-invariant sets $\Omega_i(G)$  such that $\nu(\Omega_i(G))=1$
and measurable function $\gamma_{G,i} : \Omega_i(G) \to I_i$ such that $\Gamma_{G,i} $, the graph of $\gamma_{G,i}$, is invariant under $G$.

Let $\textbf{t} \in \Omega$ and $\varepsilon > 0$ be given.
By Theorem \ref{thm88}, $\text{Cl}(\Gamma_{G,i})=\Delta_i(G)$.
 Take $I_{i,\textbf{t}}:=\{\textbf{t}\} \times I_i$ and $\Delta_{G,i,\textbf{t}}:=\Delta_i(G)\cap I_{i,\textbf{t}}$. Then
\begin{equation*}
\Delta_{G,i,\textbf{t}}=\bigcap_{n \geq 0}g_{\textbf{t}}\circ g_{S^{-1}(\textbf{t})}\circ \dots \circ g_{S^{-n}(\textbf{t})}(I_i)=\lim_{n \to \infty}g_{\textbf{t}}\circ g_{S^{-1}(\textbf{t})}\circ \dots \circ g_{S^{-n}(\textbf{t})}(I_i), \ i=1, \dots,n.
\end{equation*}
Note that $g_{\textbf{t}}\circ g_{S^{-1}(\textbf{t})}\circ \dots \circ g_{S^{-n}(\textbf{t})}(I_i)$ is a sequence of nested intervals, and thus $\Delta_{G,i,\textbf{t}}$ is either an interval or a
single point. In particular, if $\textbf{t} \in \Omega_i(G)$ then $\Delta_{G,i,\textbf{t}}$ is a single point.

 If $n$ is big enough then
 \begin{equation*}
 g_{\textbf{t}}\circ g_{S^{-1}(\textbf{t})}\circ \dots \circ g_{S^{-n}(\textbf{t})}(I_i) \subset U_{\frac{\varepsilon}{2}}(\Delta_{G,i,\textbf{t}}),
 \end{equation*}
where $U_{\frac{\varepsilon}{2}}(\Delta_{G,i,\textbf{t}})$ is $\frac{\varepsilon}{2}$-neighborhood of $\Delta_{G,i,\textbf{t}}$.
Let $\textbf{t}^{\prime}$ be sufficiently close to $\textbf{t}$. Then $g_{\textbf{t}^{\prime}} \circ g_{S^{-1}(\textbf{t}^{\prime})} \circ \dots \circ g_{S^{-n}(\textbf{t}^{\prime})}$ is $C^2$ close to $g_{\textbf{t}}\circ g_{S^{-1}(\textbf{t})}\circ \dots \circ g_{S^{-n}(\textbf{t})}$ and hence
\begin{equation*}
 g_{\textbf{t}^{\prime}}\circ g_{S^{-1}(\textbf{t}^{\prime})}\circ \dots \circ g_{S^{-n}(\textbf{t}^{\prime})}(I_i) \subset U_{\frac{\varepsilon}{2}}(\Delta_{G,i,\textbf{t}}).
 \end{equation*}
Then
\begin{equation}\label{1}
\Delta_{G,i,\textbf{t}^{\prime}}\subset g_{\textbf{t}^{\prime}} \circ g_{S^{-1}(\textbf{t}^{\prime})} \circ \dots \circ g_{S^{-n}(\textbf{t}^{\prime})}(I_i)\subset U_{\frac{\varepsilon}{2}}(g_{\textbf{t}}\circ g_{S^{-1}(\textbf{t})}\circ \dots \circ g_{S^{-n}(\textbf{t})}(I_i))\subset U_{\varepsilon}(\Delta_{G,i,\textbf{t}}).
\end{equation}
This implies the upper-semicontinuity of $\Delta_{G,i,\textbf{t}}$.
This semicontinuity will immediately imply the continuity of its graph part.

Indeed, by (\ref{1}), we obtain that
$$\text{diam}(\Delta_{G,i,\textbf{t}^{\prime}})\leq \text{diam}(\Delta_{G,i,\textbf{t}})+2\varepsilon. $$
If $\Delta_{G,i,\textbf{t}}$ is a single point, then $\text{diam}(\Delta_{G,i,\textbf{t}})=0$.
Then, by this fact and (\ref{1}),
$$|\text{diam}(\Delta_{G,i,\textbf{t}})-\text{diam}(\Delta_{G,i,\textbf{t}^{\prime}})|= \text{diam}(\Delta_{G,i,\textbf{t}^{\prime}})\leq
\text{diam}(\Delta_{G,i,\textbf{t}})+2\varepsilon \leq 2\varepsilon.$$
This implies continuity at $\textbf{t}$.

Consider an extension of $\gamma_{G,i} : \Omega_i(G) \to I_i$ to the whole space $\Omega$ as the following:
for each $\textbf{t} \in \Omega$, take
\begin{equation}\label{2}
 \gamma_{i}(\textbf{t})=\lim_{n \to \infty}g_{\textbf{t}}\circ g_{S^{-1}(\textbf{t})}\circ \dots \circ g_{S^{-n}(\textbf{t})}(a_i),
\end{equation}
 where $a_i$ is the left endpoint of $I_i$. By (\ref{1}), if $\textbf{t} \in \Omega_i(G)$, then
$\lim_{n \to \infty},g_{\textbf{t}}\circ g_{S^{-1}(\textbf{t})}\circ \dots \circ g_{S^{-n}(\textbf{t})}(I_i)=\lim_{n \to \infty},g_{\textbf{t}}\circ g_{S^{-1}(\textbf{t})}\circ \dots \circ g_{S^{-n}(\textbf{t})}(a_i) $.
This fact implies that the map $\gamma_i$ as defined in (\ref{2}) is an extension of $\gamma_{G,i}$.
We claim that $\gamma_i$ is upper semicontinuous.

Indeed, let $\gamma_i(\textbf{t})<c$, for some $c \in (0,1)$. Then, by definition, $\lim_{n \to \infty}g_{\textbf{t}}\circ g_{S^{-1}(\textbf{t})}\circ \dots \circ g_{S^{-n}(\textbf{t})}(a_i)<c$.
Hence, there exists $n_0 \in \mathbb{N}$ such that $g_{\textbf{t}}\circ g_{S^{-1}(\textbf{t})}\circ \dots \circ g_{S^{-n}(\textbf{t})}(a_i)<c$, for each $n \geq n_0$.
Note that the fiber maps are increasing interval maps. Take $\varepsilon >0$ small enough such that if an interval map $f$  is $C^2$-$\varepsilon$-close to $g_{\textbf{t}}\circ g_{S^{-1}(\textbf{t})}\circ \dots \circ g_{S^{-n}(\textbf{t})}$ then $f(a_0)<c$. Now take $\textbf{t}^{\prime}$ sufficiently close to $\textbf{t}$ such that $g_{\textbf{t}^{\prime}} \circ g_{S^{-1}(\textbf{t}^{\prime})} \circ \dots \circ g_{S^{-n}(\textbf{t}^{\prime})}$ is $C^2$-$\varepsilon$-close to $g_{\textbf{t}}\circ g_{S^{-1}(\textbf{t})}\circ \dots \circ g_{S^{-n}(\textbf{t})}$.
Then $g_{\textbf{t}^{\prime}} \circ g_{S^{-1}(\textbf{t}^{\prime})} \circ \dots \circ g_{S^{-n}(\textbf{t}^{\prime})}(a_0)<c$. This fact implies the upper semicontinuity of $\gamma_i$.
\end{proof}

By Corollary \ref{thmm}, Lemma \ref{cor0} and Theorem \ref{thm88}, we get the next result.
\begin{corollary}\label{cor33}
Let $G \in \mathcal{U}$ and consider the compact invariant set $K(G):=\bigcup_{i=1}^n \Delta_i(G)$, where $\Delta_i(G)$, $i=1, \dots,n$, are given by (\ref{maxi}).
Then $K(G)$ is an invariant multi-graph or a bony multi-graph. In particular, $K(G)$ carries $n$ ergodic SRB measures $\nu_{\gamma_{G,i}}$, $i=1, \dots,n$, given by (\ref{meas}).
\end{corollary}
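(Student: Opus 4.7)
The plan is to assemble the corollary from the three cited ingredients, treating the pieces of $K(G)$ one at a time and then gluing them. First I would observe that the intervals $I_1,\ldots,I_n \subset \mathbb{I}$ are pairwise disjoint by construction, so the trapping regions $\Omega \times I_i$ are pairwise disjoint, and consequently the maximal attractors $\Delta_i(G)$ defined by (\ref{maxi}) are pairwise disjoint compact $G$-invariant sets. This means $K(G) = \bigsqcup_{i=1}^n \Delta_i(G)$ is a disjoint union, and any $G$-invariant probability measure $\mu$ supported on $K(G)$ splits as $\mu = \sum_{i=1}^n \alpha_i \mu_i$ where $\alpha_i = \mu(\Delta_i(G)) \geq 0$ and $\mu_i = \mu|_{\Delta_i(G)}/\alpha_i$ is $G_i$-invariant on $\Omega \times I_i$ (if $\alpha_i > 0$).

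Next I would apply Theorem \ref{thm88} to each index $i$ separately: this immediately gives that every $\Delta_i(G)$ is either a continuous attracting invariant graph or an attracting bony graph, and that each carries an ergodic SRB measure $\nu_{\gamma_{G,i}}$ supported on $\text{Cl}(\Gamma_{G,i}) = \Delta_i(G)$. Since the supports are pairwise disjoint, the measures $\nu_{\gamma_{G,1}},\ldots,\nu_{\gamma_{G,n}}$ are mutually singular and hence distinct, accounting for $n$ ergodic SRB measures on $K(G)$.

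To identify $K(G)$ as a (possibly bony) multi-graph I would apply Corollary \ref{thmm} to $(G, K(G))$. For its hypothesis, I need $\lambda(\mu, G) < 0$ for every $\mu \in \mathcal{M}_\nu^{K(G)}(G)$. Using the decomposition above, the maximal Lyapunov exponent is the weighted sum $\lambda(\mu, G) = \sum_{i : \alpha_i > 0} \alpha_i \lambda(\mu_i, G_i)$, and Lemma \ref{cor0} together with the standard inequality $\lambda(\mu_i, G_i) \leq \lambda_i(\nu, G) < 0$ (valid for any invariant measure projecting to $\nu$, by the submultiplicative/Kingman argument of Lemma \ref{lemma0}) forces $\lambda(\mu, G) < 0$. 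Corollary \ref{thmm} then yields an integer $N$ with $\#K(G)_{\textbf{t}} = N$ for $\nu$-a.e.~$\textbf{t}$; combined with the existence of the $n$ graph functions $\gamma_{G,i}$ on full-measure sets $\Omega_i(G)$ landing in the disjoint intervals $I_i$, one gets $N = n$. Hence $K(G)$ is a multi-graph, and it is a bony multi-graph precisely when at least one $\Delta_i(G)$ is bony in the sense of Theorem \ref{thm88}.

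The main obstacle I anticipate is the Lyapunov bound for \emph{arbitrary} $\mu \in \mathcal{M}_\nu^{K(G)}(G)$: Lemma \ref{cor0} controls the \emph{worst-case} exponent $\lambda_i(\nu, G)$ via the submultiplicative cocycle $\zeta_{m,i}$ on each fiber $I_i$, but one must confirm that this uniform-in-$x$ bound dominates the pointwise Lyapunov exponent of every (possibly non-SRB) invariant measure $\mu_i$ supported inside $\Omega \times I_i$. This follows because $\log Df_{\textbf{t}}^m(x) \leq \log \zeta_{m,i}(\textbf{t})$ pointwise on $I_i$, so integrating against $\mu_i$ and dividing by $m$ gives $\lambda(\mu_i, G_i) \leq \lambda_i(\nu, G) < 0$ in the limit. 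Once this uniform contraction across the whole ergodic-measure simplex is in hand, the rest of the argument is a routine gluing and the corollary follows from Theorem \ref{thm88} and Corollary \ref{thmm}.
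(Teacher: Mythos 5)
Your proposal is correct and follows essentially the same route as the paper, which derives the corollary directly by citing Corollary \ref{thmm}, Lemma \ref{cor0} and Theorem \ref{thm88} without writing out further details. Your elaboration --- the disjoint decomposition of $K(G)$ into the $\Delta_i(G)$, the pointwise bound $\log Dg_{\textbf{t}}^m(x)\leq \log \zeta_{m,i}(\textbf{t})$ to verify the negative-exponent hypothesis for all invariant measures, and the identification $N=n$ --- is a faithful filling-in of exactly the gluing argument the paper leaves implicit.
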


\section{Thermodynamic properties of invariant graphs}
Take an skew product $G \in \mathcal{U}$ given by Theorem \ref{thm000}. Here, we discuss thermodynamic properties of invariant graphs $\Gamma_{G,i}$, $i=1, \dots , n$,
given by Theorem \ref{thm88}, and hence the bony multi-graph $K(G)$.

Let $(X, d)$ be a non-empty, compact metric space and $T : X \to X$
be a continuous transformation. For each $n \in \mathbb{N}$ define for $x, y \in X$ the metric
\begin{equation*}
  d_n(x, y) := \max \{d(T^i(x), T^i(y)) : 0 \leq i < n \} .
\end{equation*}
Given some $\epsilon > 0$, a subset $\emptyset \neq E \subseteq K \subseteq X$ is called $(\epsilon, n)$-\emph{separated} in $K$, if
\begin{equation*}
\inf \{ d_n(x, y) : x \neq y \in E \} \geq \epsilon.
\end{equation*}
In addition, $E \subseteq K$ is called \emph{maximally} $(\epsilon, n)$-\emph{separated} in $K$, if for all $z \in K$ the
set $E \cup \{ z \}$ is not $(\epsilon, n)$-separated anymore. Using Zorn's lemma, for every non-empty subset
$K \subset X$ there exists a maximally $(\epsilon, n)$-separated set $E \subset K$.

In what follows, the set of all $T$-invariant probability measures is denoted by
$\mathcal{M}_T(X)$. Moreover, we denote by $\mathcal{E}_T (X) \subseteq \mathcal{M}_T(X)$ the set of all $T$-invariant, ergodic probability measures on $X$.
Recall for $\mu \in \mathcal{M}_T(X)$, we denote by
$h_\mu(T)$ the measure-theoretic entropy of $T$ with respect to $\mu$. For
dynamical systems $(X, T)$ the quantity $h_{top}(T)$ denotes the topological entropy
of $(X, T)$, and one has
\begin{equation*}
 h_{top}(T)=\sup \{h_\mu(T): \mu \in \mathcal{M}_T(X)\}.
\end{equation*}
Let $(X, T)$ be a dynamical system and $\phi : X \to \mathbb{R}$ be an arbitrary
function. For every subset $K \subseteq X$, define
\begin{equation*}
  P_K(T, \phi) := \lim_{\epsilon \to 0}\limsup_{n \to \infty}\frac{1}{n}\log \sup_{E}\sum_{x \in E}\exp \sum_{i=0}^{n-1}\phi (T^i(x)),
\end{equation*}
where the supremum is taken over all $(\epsilon, n)$-separated sets in $K$.

The variational principle for the topological
pressure of continuous functions was proven in \cite{W}: One has
\begin{equation*}
  P_X(T, \phi) = \sup \{h_\mu(X) + \int_X \phi d\mu\},
\end{equation*}
where the supremum is taken over all ergodic $T$-invariant Borel probability measures
$\mu$ on $X$. Here $h_\mu(T)$ denotes the measure theoretic entropy of $T$ with respect to $\mu$.

Take an skew product $G\in \mathcal{U}$ given by Theorem \ref{thm000}.
We recall that the base map $S$ is an extension of expanding circle map $\omega: \mathbb{T}^1 \to \mathbb{T}^1$, $\omega(t)=4t \ (mod 1)$.
In the setting of uniformly expanding maps, equilibrium states always exist and they
are unique SRB measures if the potential is H\"{o}lder continuous and the dynamics is topologically exact, see \cite[Theorem 12.1]{VO}.
Clearly $\omega$ is a topologically exact uniformly expanding map and hence it admits a unique SRB equilibrium measure $\nu^+_\phi$ for each H\"{o}lder continuous potential $\phi$.
Moreover, it is supported on the whole $\mathbb{T}^1$.

As we have seen in Subsection 4.2, there exists a semiconjugacy $\textbf{p} : \Omega \to \mathbb{T}^1$
sending each sequence $\textbf{t}=(\ldots, t_{-n}, \ldots, t_{-1}, t_0)\in \Omega$ to its term $t_0$ of order zero.
Given an ergodic measure $\mu^+$ defined on
Borel subsets of $\mathbb{T}^1$ there exists a unique measure $\mu$ defined on Borel
subsets of $\Omega$ such that $\mu^+=\textbf{p}_\ast \mu$.

Here, we investigate the relation between entropy and topological pressure of the expanding circle map $\omega$ and its extension $S$. First, we need the following results.

Assume $f:M\longrightarrow M$ and $\tilde{f}:\tilde{M}\longrightarrow \tilde{M}$ are two continuous transformations of compact topological spaces $M$ and $\tilde{M}$, respectively.
If $f$ is a topological factor of $\tilde{f}$, then $h_{top}(f)\leq h_{top}(\tilde{f})$, see \cite{VO}.
Also, Ledrappier-Walter's formula states the relation between metric entropy of $\tilde{f}:\tilde{M}\to \tilde{M}$ and its topological factor $f:M\to M$:
\begin{theorem} $(\mathbf{Ledrappier-Walter's \ formula}).$\cite{LW}
  Let $\tilde{M}$ and $M$ be compact metric spaces and let $\tilde{f}:\tilde{M}\longrightarrow \tilde{M}$, $f:M\longrightarrow M$ and $\pi:\tilde{M}\longrightarrow M$ be continuous maps such that $\pi$ is surjective and $\pi\circ\tilde{f}=f\circ\pi$, then
$$\sup_{\tilde{\mu};\pi_{*}\tilde{\mu}=\mu}h_{\tilde{\mu}}(\tilde{f})=h_{\mu}(f)+\int h_{top}(\tilde{f},\pi^{-1}(y))d\mu(y).$$
\end{theorem}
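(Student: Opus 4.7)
The strategy splits the identity into two inequalities: an upper bound $\sup_{\tilde\mu:\pi_*\tilde\mu=\mu} h_{\tilde\mu}(\tilde f) \leq h_\mu(f) + \int h_{top}(\tilde f, \pi^{-1}(y))\,d\mu(y)$ and a matching lower bound obtained by constructing near-maximizing lifts $\tilde\mu$. The backbone of the whole argument is the Abramov--Rohlin type decomposition
\[
h_{\tilde\mu}(\tilde f) \;=\; h_\mu(f) \,+\, h_{\tilde\mu}\bigl(\tilde f \,\big|\, \pi^{-1}\mathcal{B}_M\bigr),
\]
valid whenever $\pi$ is a measurable factor map with $\pi_*\tilde\mu=\mu$, where $\mathcal{B}_M$ denotes the Borel $\sigma$-algebra of $M$. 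With this decomposition in place, the task reduces to analyzing the fiber (conditional) entropy $h_{\tilde\mu}(\tilde f\mid\pi^{-1}\mathcal{B}_M)$ and relating it to the integral of Bowen's topological entropy on the fibers $\pi^{-1}(y)$.

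For the upper bound I would fix a finite measurable partition $\tilde\xi$ of $\tilde M$ of diameter less than some $\varepsilon>0$, consider its refinement $\tilde\xi_n=\bigvee_{i=0}^{n-1}\tilde f^{-i}\tilde\xi$, and disintegrate $\tilde\mu=\int \tilde\mu_y\,d\mu(y)$ via Rohlin's theorem. A fiberwise Shannon--McMillan argument bounds $\tfrac{1}{n}H_{\tilde\mu_y}(\tilde\xi_n\mid\pi^{-1}\mathcal{B}_M)$ by $\log$ of the maximal cardinality of an $(\varepsilon,n)$-separated set inside $\pi^{-1}(y)$ in the metric $d_n$; letting $n\to\infty$ and then $\varepsilon\to 0$ recovers Bowen's $h_{top}(\tilde f,\pi^{-1}(y))$, and dominated convergence (the partition entropy being uniformly bounded by $\log|\tilde\xi|$) justifies exchanging the limit with $\int\cdot\,d\mu(y)$.

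The reverse inequality is the genuinely hard direction: given $\delta>0$ I must produce $\tilde\mu$ with $\pi_*\tilde\mu=\mu$ whose fiber entropy exceeds $\int h_{top}(\tilde f,\pi^{-1}(y))\,d\mu(y)-\delta$. The plan is to select, for each $y$ in a set of large $\mu$-measure and large $n$, a maximal $(\varepsilon,n)$-separated set $E_n(y)\subset\pi^{-1}(y)$; form the empirical measure $\sigma_n(y):=\tfrac{1}{|E_n(y)|}\sum_{x\in E_n(y)}\delta_x$; symmetrize by Krylov--Bogoliubov averaging $\tau_n(y):=\tfrac{1}{n}\sum_{k=0}^{n-1}\tilde f^k_*\sigma_n(y)$; integrate these fiber measures against a Rohlin disintegration of $\mu$ to obtain a candidate $\tilde\mu_n$ on $\tilde M$; and extract a weak-$*$ accumulation point $\tilde\mu$. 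By construction $\pi_*\tilde\mu=\mu$ and $\tilde\mu$ is $\tilde f$-invariant, and upper semicontinuity of entropy together with a fiberwise variational principle delivers the required lower bound on $h_{\tilde\mu}(\tilde f\mid\pi^{-1}\mathcal{B}_M)$.

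The main obstacle is the measurable selection and lifting step in this lower-bound construction: one must verify that $y\mapsto E_n(y)$, and hence $y\mapsto\tau_n(y)$, can be chosen in a jointly measurable way so that the glued object $\tilde\mu_n$ is a bona fide Borel probability; this is typically handled by a Kuratowski--Ryll-Nardzewski selection combined with the fact that Bowen's entropy on $\pi^{-1}(y)$ can be realized as a countable supremum of measurable quantities indexed by finite open covers, which also settles the secondary concern of measurability of $y\mapsto h_{top}(\tilde f,\pi^{-1}(y))$. A further delicate point is that passing to the weak-$*$ limit must not collapse the fiber-entropy estimate, so uniform control of the partition diameters and an affinity/upper-semicontinuity argument for the conditional entropy functional are required at the end.
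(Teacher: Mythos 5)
The paper contains no proof of this statement: it is quoted verbatim from Ledrappier--Walters \cite{LW} and used as a black box in Section~4 (to compare entropy and pressure of $\omega$ with its natural extension $S$, and of $S$ with the skew products $G_i$), so there is no in-paper argument to measure your proposal against. Judged against the original source, your outline follows essentially the classical route: the Abramov--Rokhlin-type splitting $h_{\tilde\mu}(\tilde f)=h_\mu(f)+h_{\tilde\mu}(\tilde f\mid\pi^{-1}\mathcal{B}_M)$ reduces the theorem to a relativised variational principle for the conditional entropy, whose two halves are handled by a Misiurewicz-style partition count (upper bound) and by lifting empirical measures supported on maximal $(\varepsilon,n)$-separated subsets of the fibers (lower bound). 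That is the same architecture as in \cite{LW}, so you are reconstructing the known proof rather than giving a genuinely different one.

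Two points need repair before the sketch becomes a proof. First, the splitting $h_{\tilde\mu}(\tilde f)=h_\mu(f)+h_{\tilde\mu}(\tilde f\mid\pi^{-1}\mathcal{B}_M)$ is an equality only when $h_\mu(f)<\infty$ (the inequality $\leq$ always holds); you should either add this hypothesis to the reduction or note that when $h_\mu(f)=\infty$ the claimed identity is trivial, since every lift $\tilde\mu$ of $\mu$ satisfies $h_{\tilde\mu}(\tilde f)\geq h_\mu(f)$. Second, and more seriously, your final step invokes ``upper semicontinuity of entropy'' to prevent the fiber-entropy estimate from collapsing in the weak-$*$ limit. The entropy map $\mu\mapsto h_\mu(T)$ is \emph{not} upper semicontinuous for general continuous maps of compact metric spaces (it is for expansive maps, which is exactly why the paper needs expansiveness of $S$ in Proposition~4.6, but no such hypothesis is available here), so this step fails as stated. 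The correct device, as in Misiurewicz's proof of the absolute variational principle and its relativisation in \cite{LW}, is to fix a finite partition $\xi$ of small diameter whose elements have boundaries of measure zero for the accumulation point $\tilde\mu$, bound $H_{\tilde\mu_n}\bigl(\bigvee_{i=0}^{q-1}\tilde f^{-i}\xi\mid\pi^{-1}\mathcal{B}_M\bigr)$ from below by the logarithm of the cardinality of the separated sets \emph{before} passing to the limit in $n$, and then use $\tilde\mu(\partial\xi)=0$ to transfer the estimate to $\tilde\mu$. With that substitution, together with the measurable-selection work you already flagged, the argument closes.
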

\begin{lemma}\label{l}
The following two facts hold:
\begin{enumerate}
  \item [$(1)$] For any probability measures $\mu$ and $\mu^+$ invariant under $S$ and $\omega$, respectively, with $\mu^+=\textbf{p}_\ast \mu$, one has that $h_{\mu^+}(\omega)=h_\mu(S)$.
  \item [$(2)$] Let $\phi$ be a H\"{o}lder continuous potential for the expanding circle map $\omega$. Then $P_\omega(\phi)=P_S(\psi)$, where $\psi= \phi \circ \textbf{p}$.
\end{enumerate}
\end{lemma}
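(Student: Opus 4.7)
The plan is to prove part (1) by applying the Ledrappier--Walters formula stated immediately above, and then derive part (2) from (1) via the variational principle together with the change-of-variables formula $\int \phi\circ\mathbf{p}\,d\mu=\int\phi\,d(\mathbf{p}_{\ast}\mu)$.

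For part (1), since $(\Omega,S)$ is the inverse limit of $(\mathbb{T}^1,\omega)$ and $\mathbf{p}$ sends each pre-orbit sequence to its zeroth term, the map $\mathbf{p}$ is a continuous surjection with $\mathbf{p}\circ S=\omega\circ\mathbf{p}$, so Ledrappier--Walters applies. The key step is to show that the topological fiber entropy vanishes, i.e.\ $h_{\mathrm{top}}(S,\mathbf{p}^{-1}(y))=0$ for every $y\in\mathbb{T}^1$. Fix two points $\mathbf{t}=(\ldots,t_{-1},t_0)$ and $\mathbf{t}'=(\ldots,t'_{-1},t'_0)$ in $\mathbf{p}^{-1}(y)$, so that $t_0=t'_0=y$; under $S^n$ the coordinates at positions $-n+1,\ldots,0$ become the common orbit $y,\omega(y),\ldots,\omega^{n-1}(y)$, while the original coordinates $t_{-k},t'_{-k}$ migrate to position $-k-n$. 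In the standard product metric on $\Omega$ (where the weight of position $-j$ decays geometrically in $j$) this forces $d(S^n\mathbf{t},S^n\mathbf{t}')\to 0$, so $S$ acts as a contraction on each fiber and the Bowen fiber entropy is zero. Plugging into the Ledrappier--Walters formula gives
\begin{equation*}
\sup_{\tilde{\mu}:\mathbf{p}_{\ast}\tilde{\mu}=\mu^+}h_{\tilde{\mu}}(S)=h_{\mu^+}(\omega)+\int h_{\mathrm{top}}(S,\mathbf{p}^{-1}(y))\,d\mu^+(y)=h_{\mu^+}(\omega).
\end{equation*}
By Remark \ref{e} the lift $\mu$ with $\mathbf{p}_{\ast}\mu=\mu^+$ is unique, so the supremum is attained at the single measure $\mu$, yielding $h_{\mu}(S)=h_{\mu^+}(\omega)$.

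For part (2), the variational principle gives
\begin{equation*}
P_\omega(\phi)=\sup_{\mu^+\in\mathcal{M}_\omega(\mathbb{T}^1)}\!\left\{h_{\mu^+}(\omega)+\int\phi\,d\mu^+\right\},\qquad P_S(\psi)=\sup_{\mu\in\mathcal{M}_S(\Omega)}\!\left\{h_{\mu}(S)+\int\psi\,d\mu\right\}.
\end{equation*}
For any $\mu\in\mathcal{M}_S(\Omega)$ with projection $\mu^+=\mathbf{p}_{\ast}\mu$ one has $\int\psi\,d\mu=\int\phi\circ\mathbf{p}\,d\mu=\int\phi\,d\mu^+$, and by part (1) $h_{\mu}(S)=h_{\mu^+}(\omega)$. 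Since $\mathbf{p}_{\ast}\colon\mathcal{M}_S(\Omega)\to\mathcal{M}_\omega(\mathbb{T}^1)$ is a bijection (existence is standard for semi-conjugacies, uniqueness is Remark \ref{e}), the two families of functionals being maximized are in bijective term-by-term correspondence and the suprema agree, giving $P_\omega(\phi)=P_S(\psi)$.

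The main obstacle is the fiber-entropy calculation $h_{\mathrm{top}}(S,\mathbf{p}^{-1}(y))=0$; everything else is bookkeeping with the variational principle and the uniqueness of the measure lift. The contraction-on-fibers argument for $S$ has to be stated with some care because $\mathbf{p}^{-1}(y)$ is not $S$-invariant (its image lies in $\mathbf{p}^{-1}(\omega y)$), so one should work with Bowen's definition of topological entropy for non-invariant subsets, or alternatively invoke Rokhlin's classical theorem that the natural extension preserves measure-theoretic entropy, which would supply $h_{\mu}(S)=h_{\mu^+}(\omega)$ directly and bypass the Ledrappier--Walters step entirely.
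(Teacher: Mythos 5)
Your proposal is correct and follows essentially the same route as the paper: both hinge on the Ledrappier--Walters formula together with the vanishing of the fiber entropy $h_{\mathrm{top}}(S,\mathbf{p}^{-1}(y))=0$ and the uniqueness of the lift of an invariant measure to the inverse limit. Your contraction-on-fibers argument is just a more explicit version of the paper's remark that a finite subset of $\mathbf{p}^{-1}(t)$ serves as an $(n,\epsilon)$-generator for every $n$, and your derivation of part (2) from part (1) via the measure bijection is equivalent to the paper's second application of Ledrappier--Walters inside the variational supremum.
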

\begin{proof}
(1) Since
\begin{equation*}
  \textbf{p}^{-1}(t)=\{(\ldots, t_{-n}, \ldots, t_{-1}, t_0)\in \Omega: t_0=t\}
\end{equation*}
we observe that $h_{top}(\omega, \textbf{p}^{-1}(t))=0$
 for every $t \in \mathbb{T}^1$ because we can choose a subset of $\textbf{p}^{-1}(t)$
with finite cardinality as $n$-generator for every $n \in \mathbb{N}$.
Thus, we apply the Ledrappier-Walter's formula to conclude that $h_{\mu^+}(\omega)=h_\mu(S)$, see Section 3.3 of \cite{RV} for more details.

(2) First we observe that if $\phi$ is H\"{o}lder continuous then $\phi \circ \textbf{p}$ is also H\"{o}lder continuous.
Since $h_{top}(\omega, \textbf{p}^{-1}(t))=0$
 for every $t \in \mathbb{T}^1$, by the previous lemma and applying Ledrappier-Walter's formula, we have
 \begin{eqnarray*}
   P_\omega(\phi)\leq P_S(\psi) &=& \sup\{h_\mu(S)+\int \psi d\mu\}  \\
   &=& \sup\{h_{\mu^+}(\omega)+\int h_{top}(\omega, \textbf{p}^{-1}(t))d\mu^+(t)+\int \phi d\mu^+\} \\
    &=& P_\omega(\phi),
 \end{eqnarray*}
 where the supremum is taken over all probability measures $\mu$ invariant under $S$.
 Note that given an invariant measure $\mu^+$ defined on
Borel subsets of $\mathbb{T}^1$ there exists a unique invariant measure $\mu$ defined on Borel
subsets of $\Omega$ such that $\mu^+=\textbf{p}_\ast \mu$.
\end{proof}
By Theorem \ref{thm000}, there exists $S$-invariant sets $\Omega_i(G)\subseteq \Omega$, $i=1, \dots,n$, such that $\nu(\Omega_i(G))=1$
and measurable functions $\gamma_{G,i}:\Omega_i(G)\to I_i$ such that $\Gamma_{G,i}$, the graphs of $\gamma_{G,i}$, are invariant under $G$.
Furthermore $\Gamma_{G,i}$, $i=1, \dots,n$, are attracting $(G,\nu)$ invariant graphs.
Let us take the skew products
\begin{equation}\label{ss}
G_i:= G|_{\Omega \times I_i}, \ G_i(\textbf{t}, x)= (S(\textbf{t}), g(\textbf{t}, x)), \ 1\leq i\leq n.
\end{equation}
 By Lemma \ref{lem99}, we extend measurable functions $\gamma_{G,i}$ to the whole space $\Omega$ so that the extended functions are upper semicontinuous. We also write the extended functions by the same notation $\gamma_{G,i}$.

Recently, a lot of attention has been done to extend the definition of pressure to not necessarily
continuous functions $\phi$, and to prove a corresponding variational principle.
A variational principle for sub-additive, upper semi-continuous sequences of functions was established
in \cite{CFH} and \cite{BF}.
This result was recently generalized in \cite{FH} for weighted topological
pressure on systems with upper semi-continuous entropy mapping.
Then, Rauch \cite{R} extended the original definitions of pressure to discontinuous $\phi$, and compare
them to the classical ones. Furthermore he determined several classes of functions,
which admit variational inequalities and principles.

Following \cite{R}, we call $\phi$ to be \emph{quasi-integrable with respect to} $\mu$, if either
$\int_X \phi^+ d\mu <\infty$ or $\int_X \phi^- d\mu <\infty$, where $\phi^+:=\max(\phi,0)$ and $\phi^+:=\max(- \phi,0)$.
The set of all measurable $\phi : X \to \mathbb{R}$, which are quasi-integrable
for all $\mu \in \mathcal{M}_T (X)$, is defined by $Q_T (X)$. We call $\phi \in Q_T (X)$ \emph{quasi-integrable
with respect to} $T$.
Let be $\phi \in Q_T (X)$. The function $\phi$ is \emph{upper semi-continuous with
respect to} $T$, if the following holds: If $\{ \mu_n \}_{n\in \mathbb{N}}$
is a sequence of atomic probability
measures $\mu_n$ such that there exists a $\mu \in \mathcal{M}_T (X)$ satisfying $\mu_n \to \mu$
in the weak$^\ast$-topology, then
\begin{equation*}
  \limsup_{n \to \infty}\int_X \phi d\mu_n \leq \int_X \phi d\mu.
\end{equation*}
The set of all upper semi-continuous functions with respect to $T$ is denoted by
$U_T (X) \subseteq Q_T (X)$, see \cite{R} for more details.
\begin{theorem}\cite[Theorem C]{R}
Let $(X, T)$ be a dynamical system satisfying
$h_{top}(T) < \infty$. If $\phi : X \to \mathbb{R}$ is upper semi-continuous with respect to $T$ (see the above
definition), then one has
\begin{equation*}
  P_T(X,\phi)=\sup\{h_\mu(T)+ \int_X \phi d\mu \},
\end{equation*}
where the supremum is taken over all $T$-invariant Borel probability measures $\mu$ on
$X$.
\end{theorem}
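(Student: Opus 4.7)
My plan is to establish the variational principle by proving the two inequalities separately, following the classical Misiurewicz--Walters scheme, and invoking the hypothesis of upper semi-continuity with respect to $T$ exactly at the step where continuity of $\phi$ is used in the standard proof.

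For the easy direction $\sup_\mu\{h_\mu(T)+\int\phi\,d\mu\}\leq P_T(X,\phi)$, I would fix $\mu\in\mathcal{M}_T(X)$, $\epsilon>0$, and a finite measurable partition $\alpha$ of $X$ with $\mu(\partial\alpha)=0$ and diameter less than $\epsilon$; then for each $n$ the join $\alpha_n:=\bigvee_{i=0}^{n-1}T^{-i}\alpha$ has $d_n$-diameter at most $\epsilon$. In each atom $B\in\alpha_n$ I would pick a representative $x_B$ so that $S_n\phi(x_B)$ approximates $\sup_B S_n\phi$; quasi-integrability of $\phi$ keeps these finite on a set of full measure. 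The collection $\{x_B\}$ is $(\epsilon,n)$-separated, and the elementary inequality $\sum p_i\log(q_i/p_i)\leq\log\sum q_i$ with $p_i=\mu(B)$ and $q_i=e^{S_n\phi(x_B)}$ gives
\[
H_\mu(\alpha_n)+\int S_n\phi\,d\mu\;\leq\;\log\sum_{B\in\alpha_n}e^{S_n\phi(x_B)}+o(n).
\]
Dividing by $n$, letting $n\to\infty$, and refining $\alpha$ delivers the easy bound.

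For the hard direction $P_T(X,\phi)\leq\sup_\mu\{h_\mu(T)+\int\phi\,d\mu\}$ I would adapt the Misiurewicz construction. For each $n$ pick a maximal $(\epsilon,n)$-separated set $E_n$ that realises the supremum defining $P_T(X,\phi)$ up to $o(n)$, and form the atomic measures
\[
\sigma_n:=\frac{1}{Z_n}\sum_{x\in E_n}e^{S_n\phi(x)}\delta_x,\qquad Z_n:=\sum_{x\in E_n}e^{S_n\phi(x)},\qquad \mu_n:=\frac{1}{n}\sum_{i=0}^{n-1}T^i_\ast\sigma_n.
\]
Each $\mu_n$ is atomic, and passing to a subsequence $\mu_{n_k}\to\mu$ weakly-$\ast$ yields $\mu\in\mathcal{M}_T(X)$ by the usual Krylov--Bogolyubov argument. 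With a partition $\alpha$ satisfying $\mu(\partial\alpha)=0$, the identity $H_{\sigma_n}(\xi_n)+\int S_n\phi\,d\sigma_n=\log Z_n$ (where $\xi_n$ is the atomic partition of $E_n$) combined with Walters' refinement and averaging trick produces the key estimate
\[
\frac{1}{n_k}\log Z_{n_k}\;\leq\;\frac{1}{q}H_{\mu_{n_k}}\!\Bigl(\bigvee_{j=0}^{q-1}T^{-j}\alpha\Bigr)+\int\phi\,d\mu_{n_k}+\eta(\epsilon,q,n_k),
\]
with $\eta\to 0$ under appropriate successive limits in $k$, $q$ and $\epsilon$.

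The decisive step is the passage to the limit $k\to\infty$. The entropy term is controlled by the standard upper semi-continuity of $\nu\mapsto H_\nu(\beta)$ at measures $\nu$ with $\nu(\partial\beta)=0$, together with $h_{\mathrm{top}}(T)<\infty$ to keep everything finite; letting $q\to\infty$ and refining $\alpha$ brings in $h_\mu(T)$. For the integral term continuity of $\phi$ is unavailable, and this is exactly where the hypothesis is used: since every $\mu_{n_k}$ is atomic and $\mu_{n_k}\to\mu\in\mathcal{M}_T(X)$, upper semi-continuity with respect to $T$ supplies the one-sided bound
\[
\limsup_{k\to\infty}\int\phi\,d\mu_{n_k}\;\leq\;\int\phi\,d\mu,
\]
which is in exactly the direction needed. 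Combining and finally letting $\epsilon\to 0$ yields $P_T(X,\phi)\leq h_\mu(T)+\int\phi\,d\mu$. The main obstacle is precisely this integral limit: for merely u.s.c.\ $\phi$ the sequence $\int\phi\,d\mu_{n_k}$ need not converge, and without additional information could stray in the wrong direction and break the argument. The notion of upper semi-continuity with respect to $T$ is calibrated so that the atomic measures produced by the Misiurewicz construction do satisfy the required one-sided bound; with that in hand, the remaining care is only in perturbing $\alpha$ to ensure $\mu(\partial\alpha)=0$, which is standard because $\mu$ can charge only countably many boundaries in any one-parameter family.
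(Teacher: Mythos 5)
The paper does not prove this statement at all: it is quoted verbatim as Theorem~C of Rauch \cite{R}, so your proposal has to be measured against the argument in that source. Your treatment of the hard inequality $P_T(X,\phi)\leq\sup_\mu\{h_\mu(T)+\int\phi\,d\mu\}$ is essentially the correct one and matches Rauch's: the Misiurewicz construction produces the weighted atomic measures $\sigma_n$ and their averages $\mu_n$, the identity $H_{\sigma_n}(\xi_n)+\int S_n\phi\,d\sigma_n=\log Z_n$ is legitimate because a partition of diameter less than $\epsilon$ gives atoms of $d_n$-diameter less than $\epsilon$, hence at most one point of $E_n$ per atom, and the defining property of $U_T(X)$ is invoked precisely where continuity of $\phi$ would otherwise be needed, namely to get $\limsup_k\int\phi\,d\mu_{n_k}\leq\int\phi\,d\mu$ for the atomic sequence. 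That is indeed the reason the class $U_T(X)$ is defined through sequences of atomic measures, and you identified this correctly.

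The gap is in the easy inequality. Your argument hinges on the claim that the representatives $\{x_B\}$, one per atom of $\alpha_n$, form an $(\epsilon,n)$-separated set. This is false: a small partition diameter bounds the $d_n$-diameter of each atom, but representatives of two \emph{adjacent} atoms can be arbitrarily $d_n$-close (take two atoms sharing a boundary point). Without separatedness, $\sum_B e^{S_n\phi(x_B)}$ cannot be compared to $\sup_E\sum_{x\in E}e^{S_n\phi(x)}$. The classical repair (Walters' proof of Theorem 9.10) replaces $S_n\phi(x_B)$ by $S_n\phi$ at a genuinely separated point within $d_n$-distance $2\epsilon$, controls the multiplicity of this assignment by shrinking the atoms to positively separated compact sets, and absorbs the error $n\,\mathrm{var}(\phi,2\epsilon)$ using \emph{uniform continuity} of the potential. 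For a potential that is merely measurable and quasi-integrable, $\mathrm{var}(\phi,2\epsilon)$ need not tend to $0$ (it can be infinite), so this repair is unavailable and the partition route collapses. Rauch proves the variational inequality for quasi-integrable potentials by a different mechanism: Katok's entropy formula for ergodic measures combined with Birkhoff's ergodic theorem applied to $\phi$, which yields, for large $n$, an $(\epsilon,n)$-separated set of cardinality roughly $e^{nh_\mu(T)}$ on which $S_n\phi\geq n(\int\phi\,d\mu-\gamma)$; this needs only measurability and integrability of $\phi$, never its modulus of continuity. You would need to replace your first paragraph by an argument of this type for the proof to be complete.
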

A function $\phi : X \to \mathbb{R}$ is called \emph{upper semi-continuous}, if
$\{ x \in X : \phi(x) < c\}$ is an open set for every $c \in \mathbb{R}$. By definition every upper
semi-continuous function is also Borel measurable. We denote the set of all upper
semi-continuous functions $\phi : X \to \mathbb{R}$ by $U(X)$. As $X$ is compact, every $\phi \in U(X)$
is bounded from above (see for example \cite{AB} Theorem 2.43). This immediately yields
$U(X) \subseteq Q_T (X)$. In addition, by Proposition 6 of \cite{R}, one has $U(X) \subseteq U_T (X)$ for every continuous mapping $T : X \to X$.
\begin{corollary}\cite[Corollary~6]{R}\label{upper}
Let $h_{top}(T) < \infty$ and $\phi \in U(X)$. Then one has
\begin{equation*}
  P_T(X,\phi)=\sup\{h_\mu(T)+ \int_X \phi d\mu \},
\end{equation*}
where the supremum is taken over all $T$-invariant Borel probability measures $\mu$ on
$X$.
\end{corollary}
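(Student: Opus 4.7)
The plan is to derive this corollary from Theorem C above by verifying the inclusion $U(X) \subseteq U_T(X)$, after which the variational principle is immediate since $h_{top}(T) < \infty$. For the quasi-integrability component, note that because $X$ is compact and $\phi \in U(X)$, the function $\phi$ attains its supremum on $X$, so $M := \sup_X \phi < \infty$. Hence $\phi^+ = \max(\phi,0) \leq \max(M,0)$ is bounded and $\int_X \phi^+ \, d\mu < \infty$ for every Borel probability measure $\mu$ on $X$, which shows $\phi \in Q_T(X)$.

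For the sequential weak$^*$ upper semi-continuity required by the definition of $U_T(X)$, I would produce a decreasing sequence of continuous majorants via the Pasch--Hausdorff envelope: on the compact metric space $(X,d)$, set
\[
\phi_k(x) := \sup_{y \in X} \{ \phi(y) - k\, d(x,y)\}, \qquad k \in \mathbb{N}.
\]
Each $\phi_k$ is $k$-Lipschitz (hence continuous), satisfies $\phi \leq \phi_k \leq M$, is decreasing in $k$, and converges pointwise to $\phi$ by the upper semi-continuity of $\phi$ together with the uniform upper bound $M$. Now given any sequence of atomic probability measures $\mu_n$ with $\mu_n \to \mu \in \mathcal{M}_T(X)$ in the weak$^*$-topology, the continuity and boundedness of $\phi_k$ on compact $X$ yield $\int_X \phi_k \, d\mu_n \to \int_X \phi_k \, d\mu$ as $n \to \infty$ for each fixed $k$. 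Since $\phi \leq \phi_k$ pointwise,
\[
\limsup_{n \to \infty} \int_X \phi \, d\mu_n \;\leq\; \lim_{n \to \infty} \int_X \phi_k \, d\mu_n \;=\; \int_X \phi_k \, d\mu.
\]
Finally $\phi_1$ is continuous on compact $X$ and therefore $\mu$-integrable, so the dominated convergence theorem applied to $\phi_k \downarrow \phi$ gives $\int_X \phi_k \, d\mu \to \int_X \phi \, d\mu$ as $k \to \infty$; hence $\limsup_n \int_X \phi \, d\mu_n \leq \int_X \phi \, d\mu$, which is the required semi-continuity condition. Thus $\phi \in U_T(X)$.

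With $\phi \in U_T(X)$ established, apply Theorem C to the triple $(X,T,\phi)$ to obtain $P_T(X,\phi) = \sup\{ h_\mu(T) + \int_X \phi \, d\mu \}$ over $\mu \in \mathcal{M}_T(X)$, which is the claim. The only nontrivial step is the construction and handling of the envelopes $\phi_k$; the upper bound $M$ is essential both for ensuring $\phi_k$ is finite-valued and for invoking dominated convergence in the last step. No use of the dynamics $T$ is needed for the inclusion $U(X) \subseteq U_T(X)$, which is purely a topological/measure-theoretic fact about upper semi-continuous functions on compact metric spaces.
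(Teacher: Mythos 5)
Your proposal is correct, and it follows the same route the paper itself takes: this corollary is quoted from Rauch \cite[Corollary~6]{R}, and the paper's justification (in the text immediately preceding the statement) is exactly your decomposition --- $U(X)\subseteq Q_T(X)$ because an upper semi-continuous function on a compact space is bounded above, $U(X)\subseteq U_T(X)$, and then Theorem C. The one genuine difference is that the paper simply cites Proposition 6 of \cite{R} for the inclusion $U(X)\subseteq U_T(X)$, whereas you prove it from scratch via the Pasch--Hausdorff envelopes $\phi_k(x)=\sup_y\{\phi(y)-k\,d(x,y)\}$; that argument is sound and makes the corollary self-contained. One small technical point: in the last step you invoke the dominated convergence theorem for $\phi_k\downarrow\phi$, but $\phi$ is only quasi-integrable, so $\int_X\phi\,d\mu$ may equal $-\infty$ and there need not be an integrable lower dominating function. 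What you actually need (and what holds) is the monotone convergence theorem for decreasing sequences bounded above by the integrable function $\phi_1$, which gives $\int_X\phi_k\,d\mu\downarrow\int_X\phi\,d\mu$ with values in $[-\infty,\infty)$; the desired inequality $\limsup_n\int_X\phi\,d\mu_n\leq\int_X\phi\,d\mu$ then follows in all cases.
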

Also, we recall the following proposition from \cite{R}.
\begin{proposition}
Let $\{\mu_n\}_{n \in \mathbb{N}}$  be a sequence of Borel probability measures with limit measure $\mu$ in the weak$^*$ topology.
Then one has for $\varphi \in U(X)$
\begin{equation*}
  \limsup_{n \to \infty}\int_X \varphi d\mu_n \leq \int_X \varphi d\mu.
\end{equation*}
\end{proposition}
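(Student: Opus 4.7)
The plan is to approximate the upper semi-continuous function $\varphi$ from above by a decreasing sequence of continuous functions, and then exploit weak$^{*}$ convergence on this approximating sequence. This is the standard route via Portmanteau-type arguments, and it works because $X$ is a compact metric space.

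First, I would observe that since $X$ is compact and $\varphi$ is upper semi-continuous, $\varphi$ is bounded from above. To avoid issues when $\varphi$ is unbounded from below, I would replace $\varphi$ by its truncation $\varphi_{N}:=\max(\varphi,-N)$, which is still upper semi-continuous (max of a u.s.c.\ and a continuous function) and is now bounded on $X$. Passing from $\varphi_{N}$ back to $\varphi$ at the end uses $\int_{X}\varphi\,d\mu_{n}\le \int_{X}\varphi_{N}\,d\mu_{n}$ together with the monotone convergence theorem $\int_{X}\varphi_{N}\,d\mu\downarrow\int_{X}\varphi\,d\mu$ as $N\to\infty$ (valid in $[-\infty,+\infty)$ since $\varphi$ is bounded above).

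Next, for bounded u.s.c.\ $\varphi$ on a compact metric space $(X,d)$, I would construct the Lipschitz envelopes
\begin{equation*}
\psi_{k}(x)\;:=\;\sup_{y\in X}\bigl(\varphi(y)-k\,d(x,y)\bigr),\qquad k\in\mathbb{N}.
\end{equation*}
A direct verification shows that each $\psi_{k}$ is $k$-Lipschitz (in particular continuous), that $\psi_{k}\ge\varphi$, that $(\psi_{k})$ is pointwise decreasing in $k$, and that $\psi_{k}(x)\downarrow\varphi(x)$ for every $x\in X$; this last point is the only nontrivial item and uses upper semi-continuity of $\varphi$ at $x$. Since each $\psi_{k}$ is continuous and $\mu_{n}\to\mu$ in the weak$^{*}$ topology, I can apply it to obtain, for every fixed $k$,
\begin{equation*}
\limsup_{n\to\infty}\int_{X}\varphi\,d\mu_{n}\;\le\;\limsup_{n\to\infty}\int_{X}\psi_{k}\,d\mu_{n}\;=\;\int_{X}\psi_{k}\,d\mu.
\end{equation*}
Finally, letting $k\to\infty$ and applying the monotone (or dominated) convergence theorem to the decreasing bounded sequence $\psi_{k}\downarrow\varphi$ gives $\int_{X}\psi_{k}\,d\mu\downarrow\int_{X}\varphi\,d\mu$, which combined with the inequality above yields the conclusion.

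The only genuinely delicate step is verifying the pointwise convergence $\psi_{k}(x)\downarrow\varphi(x)$ from upper semi-continuity: given $\varepsilon>0$, by u.s.c.\ there exists $\delta>0$ with $\varphi(y)<\varphi(x)+\varepsilon$ whenever $d(x,y)<\delta$, and using boundedness of $\varphi$ from above by some $M$ one shows that the supremum in the definition of $\psi_{k}(x)$ is attained (to within $\varepsilon$) inside the $\delta$-ball once $k$ is large enough that $k\delta\ge M-\inf\varphi$ (or $M+N$ after truncation), giving $\psi_{k}(x)\le\varphi(x)+\varepsilon$. Everything else is a routine application of weak$^{*}$ convergence to continuous test functions and a monotone convergence argument.
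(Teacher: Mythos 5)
Your argument is correct. Note, however, that the paper does not prove this proposition at all: it is quoted verbatim from Rauch's article \cite{R} (where it appears with proof), so there is no in-paper argument to compare against. Your route --- truncate $\varphi$ from below, approximate the bounded upper semi-continuous function from above by the decreasing sequence of Lipschitz envelopes $\psi_k(x)=\sup_{y}\bigl(\varphi(y)-k\,d(x,y)\bigr)$, test weak$^{*}$ convergence against each continuous $\psi_k$, and pass to the limit by monotone convergence --- is the standard proof of this Portmanteau-type inequality on a compact metric space, and every step you flag as delicate (upper semi-continuity of $\max(\varphi,-N)$, the pointwise convergence $\psi_k\downarrow\varphi$ via the $\delta$-ball localization, the final exchange of limits) is handled correctly. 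One small point worth making explicit if you write this up in full: the hypothesis that $X$ is compact is used twice, once to bound $\varphi$ from above (as the paper notes via \cite{AB}) and once to guarantee that weak$^{*}$ convergence is tested against all of $C(X)$, so the Lipschitz functions $\psi_k$ are admissible test functions.
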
\label{p1}
\begin{proposition}\label{eq}
Consider the skew products $G_i= G|_{\Omega \times I_i}$, $1\leq i\leq n$, given by (\ref{ss})
and let $\phi_i: \Omega \times I_i \to \mathbb{R}$ be a H\"{o}lder continuous potential.
Take $\psi_i : \Omega \to \mathbb{R}$ defined by $\psi_i(\textbf{t}) = \phi_i(\textbf{t}, \gamma_{G,i}(\textbf{t}))$.
Then there exist some equilibrium state $\mu_{\psi_i}$ for $(S, \psi_i)$.
\end{proposition}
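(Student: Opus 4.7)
The strategy is to apply Rauch's variational principle for upper semicontinuous potentials (Corollary \ref{upper}) together with compactness of $\mathcal{M}_S(\Omega)$ in the weak-$\ast$ topology, and to produce $\mu_{\psi_i}$ as a maximizer of the affine functional $\mu \mapsto h_\mu(S) + \int_\Omega \psi_i\,d\mu$. This amounts to checking two semicontinuity properties together with a boundedness and finite-entropy input.

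First I would verify that $\psi_i$ lies in $U(\Omega)$. By Lemma \ref{lem99}, the extended graph function $\gamma_{G,i}:\Omega \to I_i$ is upper semicontinuous, and $\phi_i$ is H\"{o}lder continuous (hence continuous) on the compact product $\Omega \times I_i$. Combined with the explicit description from Lemma \ref{lem99} of $\gamma_{G,i}(\textbf{t})$ as the limit of $g_\textbf{t}\circ g_{S^{-1}(\textbf{t})}\circ \cdots \circ g_{S^{-n}(\textbf{t})}(a_i)$ -- i.e.\ the distinguished endpoint of the bony fiber $\Delta_{G,i,\textbf{t}}$, which varies upper-semicontinuously in the Kuratowski sense -- this lets us conclude that $\psi_i$ is upper semicontinuous on $\Omega$. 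Moreover $\psi_i$ is bounded since $\phi_i$ is bounded on the compact set $\Omega\times I_i$.

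Second, because $h_{\text{top}}(S) = h_{\text{top}}(\omega)= \log 4 <\infty$ by Lemma \ref{l}(1), Corollary \ref{upper} applies to the pair $(S,\psi_i)$ and yields
\begin{equation*}
P_S(\psi_i) \;=\; \sup_{\mu \in \mathcal{M}_S(\Omega)} \Bigl\{ h_\mu(S) + \int_\Omega \psi_i\,d\mu \Bigr\}.
\end{equation*}
To produce an equilibrium state it then suffices to verify that this supremum is attained, for which I would prove upper semicontinuity of the whole functional on the weak-$\ast$ compact set $\mathcal{M}_S(\Omega)$. The map $\mu \mapsto \int_\Omega \psi_i\,d\mu$ is upper semicontinuous by Rauch's proposition recalled just before Proposition \ref{eq}, since $\psi_i \in U(\Omega)$. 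For the entropy term $\mu \mapsto h_\mu(S)$, I would use Lemma \ref{l}(1), which gives $h_\mu(S) = h_{\textbf{p}_*\mu}(\omega)$, together with the classical fact that the entropy map for the $C^2$ expanding (hence expansive) endomorphism $\omega$ is upper semicontinuous on its space of invariant measures; continuity of the push-forward $\mu \mapsto \textbf{p}_*\mu$ transfers this back to $S$. An upper semicontinuous functional on a compact space attains its supremum, producing the desired $\mu_{\psi_i}$.

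The main obstacle is the first step: composing an upper semicontinuous function with a continuous function is not in general upper semicontinuous, so verifying $\psi_i \in U(\Omega)$ requires genuinely using the geometry of the bones together with the canonical choice of $\gamma_{G,i}$ coming from Lemma \ref{lem99}, rather than just formal manipulation. A secondary delicate point is the upper semicontinuity of $\mu \mapsto h_\mu(S)$, which hinges on the natural-extension structure of $S$ over $\omega$ encoded in Lemma \ref{l}.
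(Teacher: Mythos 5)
Your proposal follows essentially the same route as the paper: establish that $\psi_i$ is upper semicontinuous via Lemma \ref{lem99}, invoke Corollary \ref{upper} to get the variational principle for $(S,\psi_i)$, and then extract a maximizer by weak-$\ast$ compactness of $\mathcal{M}_S(\Omega)$ using upper semicontinuity of both $\mu\mapsto\int_\Omega\psi_i\,d\mu$ and $\mu\mapsto h_\mu(S)$. The one point where you genuinely diverge is the entropy term: the paper argues that $S$ itself is expansive (it is Ruelle expanding because $\omega$ is a $C^2$ local diffeomorphism, so $S^{-1}$ is locally Lipschitz) and quotes Corollary 9.2.17 of \cite{VO} directly for $S$, whereas you transfer upper semicontinuity from the entropy map of $\omega$ through the factor map $\mathbf{p}$ using Lemma \ref{l}(1) and continuity of $\mu\mapsto\mathbf{p}_\ast\mu$. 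Both arguments are valid (an upper semicontinuous function composed with a continuous map is upper semicontinuous), and yours has the small advantage of reusing Lemma \ref{l} rather than introducing the Ruelle-expanding structure of $S$. Finally, you are right to single out the claim $\psi_i\in U(\Omega)$ as the delicate step: continuity of $\phi_i$ together with upper semicontinuity of $\gamma_{G,i}$ does \emph{not} formally imply upper semicontinuity of $\mathbf{t}\mapsto\phi_i(\mathbf{t},\gamma_{G,i}(\mathbf{t}))$ (it fails, for instance, if $\phi_i$ is strictly decreasing in the fiber variable, in which case the composition is lower semicontinuous instead). The paper asserts this step with only a citation of Lemma \ref{lem99} and no further argument, so your proposal is no weaker than the published proof here; but note that you flag the obstacle without actually closing it, and an honest completion would need either a monotonicity or continuity-off-a-small-set hypothesis on $\phi_i$, or a finer use of the Kuratowski upper semicontinuity of the fiber sets $\Delta_{G,i,\mathbf{t}}$ established in Lemma \ref{lem99}.
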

\begin{proof}
By Lemma \ref{lem99}, $\psi_i$ is upper semicontinuous.
By Corollary \ref{upper}, one has that
\begin{equation*}
  P_S(\Omega,\psi_i)=\sup\{h_\mu(S)+ \int_\Omega \psi_i d\mu \},
\end{equation*}
where the supremum is taken over all $S$-invariant Borel probability measures $\mu$ on
$\Omega$.
Since the expanding circle map $\omega$ is a $C^2$ local diffeomorphism then
the natural extension $S$ is locally Lipschitz continuous, i.e., given
$\textbf{t} \in \Omega$ there exists a neighborhood $V_\textbf{t}$
such that for every $\textbf{t}_1, \textbf{t}_2 \in S(V_{\textbf{t}})$ we have
\begin{equation*}
  d_\Omega(S^{-1}(\textbf{t}_1),S^{-1}(\textbf{t}_2))\leq \sigma(\textbf{t})d_{\Omega}(\textbf{t}_1, \textbf{t}_2)
\end{equation*}
where $\sigma(\textbf{t})=\|D\omega^{-1}\| \circ \textbf{p}(\textbf{t})$. In particular, $S$ is a Ruelle expanding map and hence it is expansive.
By Corollary 9.2.17 of \cite{VO}, the entropy function of an expansive transformation in a compact
metric space is upper semi-continuous. By Proposition \ref{p1} and upper semicontinuity of $\psi_i$, the function $\mu \mapsto \int_\Omega \psi_i d\mu$ is also upper semicontinuous.
 By these facts the map $\mu \mapsto h_\mu(S)+ \int_\Omega \psi_i d\mu$
is upper semicontinuous and hence, there exists some equilibrium state $\mu_{\psi_i}$.
Indeed, let $\{\mu_n\}_{n \in \mathbb{N}}$  be a sequence of Borel probability invariant measures such that
\begin{equation*}
  h_{\mu_n}(S)+ \int_{\Omega} \psi_i d\mu_n \quad \text{converges \ to} \quad P_S(\Omega,\psi_i).
\end{equation*}
Since the space invariant Borel probability measures is compact, there exists some accumulation point $\mu_{\psi_i}$.
By this fact and upper semicontinuity of $\mu \mapsto h_\mu(S)+ \int_\Omega \psi_i d\mu$, we get
\begin{equation*}
 h_{\mu_{\psi_i}}(S)+ \int_{\Omega} \psi_i d\mu_{\psi_i} \geq \liminf_{n \to \infty}h_{\mu_n}(S)+ \int_{\Omega} \psi_i d\mu_n=P_S(\Omega,\psi_i),
\end{equation*}
so $\mu_{\psi_i}$ is an equilibrium state, as stated.
\end{proof}
Let $\phi: \Omega \times I \to \mathbb{R}$ be a H\"{o}lder continuous potential that does not depend on the fiber.
This means that the function
$\phi(\textbf{t}, .) : \Omega \to \mathbb{R}$ is constant, for each $\textbf{t} \in \Omega$ fixed.
Take the restriction $\phi_i:=\phi |_{\Omega \times I_i}$, $i=1, \dots, n$. Clearly, $\phi_i$ is also a H\"{o}lder continuous potential.
Hence, as above, the potential $\phi_i$ induces an upper
semicontinuous potential $\psi_i : \Omega \to \mathbb{R}$ defined by $\psi_i(\textbf{t}) = \phi_i(\textbf{t}, \gamma_{G,i}(\textbf{t}))$.

We recall that the expanding circle map $\omega$ possesses an absolutely continuous invariant ergodic measure $\nu^+$
which is equivalent to Lebesgue. Then, $(\Omega, S)$ has an invariant ergodic measure $\nu$ inherited from the invariant
measure $\nu^+$, i.e. $\nu^+=\textbf{p}_\ast \nu$.
\begin{proposition}\label{p00}
Let $\phi: \Omega \times I \to \mathbb{R}$ be a H\"{o}lder continuous potential and take $\phi_i=\phi |_{\Omega \times I_i}$, $i=1, \dots, n$.
Consider the skew product $G\in \mathcal{U}$ and $G_i= G|_{\Omega \times I_i}$, $1\leq i\leq n$, given by (\ref{ss}).
Assume the measure $\mu_{\psi_i}$, given by Proposition \ref{eq}, is ergodic and it is not singular with respect to
the invariant ergodic measure $\nu$.
Then $\mu_{\phi_i}=\mu_{\psi_i} \circ (id \times \gamma_{G,i})^{-1}$ is an equilibrium state associated to $(G_i, \phi_i)$ which is supported
on the maximal attractors $\Delta_i(G)$.
\end{proposition}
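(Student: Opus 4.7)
The plan is to define $\mu_{\phi_i}$ as the pushforward of $\mu_{\psi_i}$ by $\mathrm{id}\times\gamma_{G,i}$ and then verify in turn: (a) this pushforward is a well-defined $G_i$-invariant probability measure supported on $\Delta_i(G)$; (b) its entropy and its integral against $\phi_i$ are transferred from downstairs without loss; (c) it saturates the variational principle for $P_{G_i}(\Omega\times I_i,\phi_i)$. The fiber-independence of $\phi$ and the non-uniform contraction supplied by Lemma \ref{cor0} will be the key ingredients.

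First, I would argue the measure is well-defined. Since $\mu_{\psi_i}$ is $S$-ergodic and not singular with respect to the $S$-ergodic measure $\nu$, the Lebesgue decomposition together with $S$-invariance of its absolutely continuous part forces $\mu_{\psi_i}\ll\nu$, and in particular $\mu_{\psi_i}(\Omega_i(G))=1$. Hence $\gamma_{G,i}$ is defined $\mu_{\psi_i}$-a.e.\ and $\mu_{\phi_i}:=\mu_{\psi_i}\circ(\mathrm{id}\times\gamma_{G,i})^{-1}$ is a Borel probability measure concentrated on $\Gamma_{G,i}\subset\Delta_i(G)$. Using the relation $g(\textbf{t},\gamma_{G,i}(\textbf{t}))=\gamma_{G,i}(S(\textbf{t}))$ valid on $\Omega_i(G)$, a direct computation shows $G_i$-invariance of $\mu_{\phi_i}$, and the map $\mathrm{id}\times\gamma_{G,i}$ becomes a measurable isomorphism between $(\Omega,\mu_{\psi_i},S)$ and $(\Gamma_{G,i},\mu_{\phi_i},G_i)$ with inverse $\pi_1|_{\Gamma_{G,i}}$. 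Consequently $h_{\mu_{\phi_i}}(G_i)=h_{\mu_{\psi_i}}(S)$, and by change of variables
\begin{equation*}
\int_{\Omega\times I_i}\phi_i\,d\mu_{\phi_i}=\int_{\Omega}\phi_i(\textbf{t},\gamma_{G,i}(\textbf{t}))\,d\mu_{\psi_i}(\textbf{t})=\int_{\Omega}\psi_i\,d\mu_{\psi_i}.
\end{equation*}
Since $\mu_{\psi_i}$ is an equilibrium state for $(S,\psi_i)$ by Proposition \ref{eq}, this yields $h_{\mu_{\phi_i}}(G_i)+\int\phi_i\,d\mu_{\phi_i}=P_S(\Omega,\psi_i)$.

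The remaining task is to compare the two pressures: $P_{G_i}(\Omega\times I_i,\phi_i)=P_S(\Omega,\psi_i)$. The lower bound $P_{G_i}(\Omega\times I_i,\phi_i)\geq P_S(\Omega,\psi_i)$ follows immediately from the previous identity once $\mu_{\phi_i}$ is admitted as a test measure in the variational principle. For the upper bound I would take an arbitrary $G_i$-invariant Borel probability $\tilde{\mu}$ on $\Omega\times I_i$ and set $m:=(\pi_1)_\ast\tilde{\mu}$. By the Ledrappier--Walters formula,
\begin{equation*}
h_{\tilde{\mu}}(G_i)\leq h_m(S)+\int_{\Omega}h_{top}\bigl(G_i,\pi_1^{-1}(\textbf{t})\bigr)\,dm(\textbf{t}).
\end{equation*}
Each fiber $\pi_1^{-1}(\textbf{t})$ is a closed interval on which the induced dynamics is injective and, by Lemma \ref{cor0}, asymptotically (non-uniformly) contracting; hence the fiber topological entropy vanishes $m$-a.e. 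Because $\phi_i$ does not depend on the fiber, $\int\phi_i\,d\tilde{\mu}=\int_{\Omega}\phi_i(\textbf{t},\cdot)\,dm=\int_{\Omega}\psi_i\,dm$, which combined with the previous display gives $h_{\tilde{\mu}}(G_i)+\int\phi_i\,d\tilde{\mu}\leq h_m(S)+\int\psi_i\,dm\leq P_S(\Omega,\psi_i)$. Taking the supremum over $\tilde{\mu}$ yields the desired upper bound, and therefore $\mu_{\phi_i}$ realises the variational supremum for $(G_i,\phi_i)$.

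The main obstacle I anticipate is the pressure comparison step: one must justify the Ledrappier--Walters inequality in a setting where the base extension $S$ is invertible on the inverse limit $\Omega$ and the fiber maps are only non-uniformly contracting, so that the vanishing of fiber entropy $h_{top}(G_i,\pi_1^{-1}(\textbf{t}))=0$ needs to be established carefully from Lemma \ref{cor0} rather than from a uniform contraction bound. A secondary subtlety, already addressed in Step 1, is converting the hypothesis ``$\mu_{\psi_i}$ is ergodic and not singular with respect to $\nu$'' into the concrete statement $\mu_{\psi_i}(\Omega_i(G))=1$, which is what makes the pushforward $\mu_{\phi_i}$ meaningful and simultaneously ensures its support lies on the invariant graph, hence on $\Delta_i(G)$.
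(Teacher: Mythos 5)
Your proposal is correct and follows essentially the same route as the paper's proof: both convert the non-singularity plus ergodicity hypothesis into equivalence of $\mu_{\psi_i}$ with $\nu$ (hence $\mu_{\psi_i}(\Omega_i(G))=1$ and support on $\Delta_i(G)$), both derive $h_{top}(G_i,\pi_1^{-1}(\textbf{t}))=0$ $\mu_{\psi_i}$-a.e.\ from the non-uniform contraction of Lemma \ref{cor0}, and both close the argument with the Ledrappier--Walters formula and the fiber-independence of $\phi$ to identify $P_{G_i}(\phi_i)$ with $P_S(\psi_i)$. Your write-up is merely more explicit about the lower/upper bound split and about where fiber-independence enters; no substantive difference.
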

\begin{proof}
Take the measure $\mu_{\psi_i}$, given by Proposition \ref{eq}, which is ergodic and it is not singular with respect to
the invariant ergodic measure $\nu$. As both measures are $S$-invariant and ergodic, this implies
that these measures coincide.

Since $\mu_{\psi_i}$ is ergodic, so $\mu_{\phi_i}$ is also ergodic. In particular, since $\mu_{\psi_i}$ is equivalent to $\nu$, so $\mu_{\phi_i}$ is supported
on the maximal attractors $\Delta_i(G)$.


By Lemma \ref{cor0}, $\lambda_i(\nu, G_i)$ is negative and $G_i$ satisfies the non-uniformly contraction condition, hence, for $\nu$- almost every $\textbf{t} \in \Omega$, $h_{top}(G_i, \pi^{-1}(\textbf{t}))=0$.
Since $\mu_{\psi_i}$ is equivalent to
$\nu$, thus, for $\mu_{\psi_i}$- almost every $\textbf{t} \in \Omega$, $h_{top}(G_i, \pi^{-1}(\textbf{t}))=0$.
Applying Ledrappier-Walter's formula, we have
 \begin{eqnarray*}
   P_S(\psi_i)\leq P_{G_i}(\phi_i) &=& \sup\{h_{\hat{\mu}}(G_i)+\int \phi_i d\hat{\mu}\}  \\
   &=& \sup\{h_{\mu}(S)+\int h_{top}(G_i, \pi^{-1}(\textbf{t}))d\mu(\textbf{t})+\int \psi_i d\mu\} \\
    &=& P_S(\psi_i).
 \end{eqnarray*}
Hence, $P_S(\psi_i)= P_{G_i}(\phi_i)$ and
$\mu_{\phi_i}$ is an equilibrium state for
$(G_i, \phi_i)$.
\end{proof}
\section{Skew products over the toral baker map}
Here, a toral baker map on $\mathbb{T}^2=\mathbb{T}^1 \times \mathbb{T}^1$ is defined by
\begin{align}\label{17}
H:\mathbb{T}^2\to \mathbb{T}^2, \ H(t,s)=(bt(mod 1),\frac{(s+[bs])}{b}),
\end{align}
for some positive integer $b$ and for each $\theta=(t,s)\in \mathbb{T}^2$.
This map is bijective but not continuous. Moreover, it preservers the Lebesgue measure $m$.
Hence, $(\mathbb{T}^2, \mathcal{B}, m, H)$ is a measure-preserving dynamical system, in the sense of Arnold \cite{AL},
where $\mathcal{B}$ is the Borel $\sigma$-algebra on $\mathbb{T}^2$.

Assume $\mathcal{F}_H \subset\mathcal{F}$ denotes the family of all skew product transformations $\mathbb{G}$ with the base map $H$ defined on $\mathbb{T}^2 \times \mathbb{I}$ of the form
\begin{equation}\label{skew00}
\mathbb{G}: \mathbb{T}^2 \times \mathbb{I} \to \mathbb{T}^2 \times \mathbb{I}, \ \mathbb{G}(\theta,x)=(H(\theta),g_{\theta}(x)),
\end{equation}
where $(\theta,x)\in \mathbb{T}^2 \times \mathbb{I}$, $\theta=(t,s)\in \mathbb{T}^2$
and the fiber maps $g_{\theta}$ depends on $\theta=(t,s)$ only through $t$, so we can write $g_{\theta}=g_{t}$, where $\mathcal{F}$ is given by Definition \ref{generalskew}.
By definition, $\mathbb{G}(\mathbb{T}^2 \times \mathbb{I}) \subset \mathbb{T}^2 \times \text{int}(\mathbb{I})$.


Note that the baker map $H$ is bijective but not continuous. Moreover, it is a canonical extension of the corresponding expanding circle map $\omega(t)=bt \ \text{mod 1}$, where $b$ is a positive integer.
Here, we assume that $b=4$.
\begin{lemma}
There exists a measure-theoretical isomorphism $h: (\mathbb{T}^2, H, m) \to(\Omega, S, \nu)$, where $m$ is the Lebesgue measure on $\mathbb{T}^2$.
\end{lemma}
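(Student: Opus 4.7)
The plan is to build the isomorphism $h$ by using the base-$b$ expansion (with $b=4$) of the second coordinate $s$ to encode the backward orbit of the first coordinate $t$ under $\omega$. Concretely, for $\theta=(t,s)\in\mathbb{T}^2$ with the base-$b$ expansion $s=\sum_{k\geq 1}s_k b^{-k}$, $s_k\in\{0,1,\dots,b-1\}$, define
\begin{equation*}
h(\theta)=(\dots,t_{-2},t_{-1},t_0)\in\Omega, \qquad t_0:=t, \qquad t_{-k}:=\frac{t_{-k+1}+s_k}{b}\ \ (k\geq 1).
\end{equation*}
By construction $\omega(t_{-k})=b\,t_{-k}\bmod 1=t_{-k+1}$ and $\lfloor b\,t_{-k}\rfloor=s_k$, so $h(\theta)$ really is a pre-orbit of $\omega$, i.e.\ $h(\theta)\in\Omega$. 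The base-$b$ expansion is unique off a countable (hence Lebesgue-null) set $N\subset[0,1)$, so $h$ is well-defined on the Lebesgue-full set $\mathbb{T}^2\setminus(\mathbb{T}^1\times N)$.

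Next I would verify the equivariance $h\circ H=S\circ h$. Writing $s_0:=\lfloor bt\rfloor$, the baker map sends $(t,s)$ to $(\omega(t),s')$ with $s'$ having base-$b$ digits $(s_0,s_1,s_2,\dots)$ (this is the content of the formula for $H$, after adopting the standard reading that the integer part introduced in the second coordinate is $\lfloor bt\rfloor$; if instead the fractional parts of $bs$ are used, the same argument applies after relabelling coordinates). Computing $h(H(t,s))$ with $t_0'=\omega(t_0)$ gives
\begin{equation*}
t_{-1}'=\tfrac{t_0'+s_0}{b}=\tfrac{\omega(t_0)+\lfloor bt_0\rfloor}{b}=t_0, \qquad t_{-k}'=\tfrac{t_{-k+1}'+s_{k-1}}{b}=t_{-k+1} \ \ (k\geq 2),
\end{equation*}
which is exactly $S(h(\theta))=(\dots,t_{-1},t_0,\omega(t_0))$.

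To see that $h$ transports $m$ to $\nu$, I would use the Kolmogorov cylinder description of $\nu$. A cylinder in $\Omega$ is specified by fixing $t_0$ in a Borel set $A_0\subset\mathbb{T}^1$ and requiring each past pre-image $t_{-k}$ to lie in one of the $b$ uniform sub-intervals of $\omega^{-1}(t_{-k+1})$ determined by a prescribed digit $j_k\in\{0,\dots,b-1\}$. Under $h^{-1}$, this cylinder corresponds to $A_0$ in the first coordinate together with the set of $s\in[0,1)$ whose first $n$ base-$b$ digits are $j_1,\dots,j_n$, a set of Lebesgue measure $b^{-n}$. Since $\nu$ is the $S$-invariant measure with $\mathbf{p}_*\nu=\nu^+$ (equivalent to Lebesgue, and on the fibres equidistributing the $b$ pre-images), the measure of this cylinder equals $\nu^+(A_0)\cdot b^{-n}=m(A_0\times\{s:s_1=j_1,\dots,s_n=j_n\})$. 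Cylinders generate $\mathcal{B}(\Omega)$, so $h_*m=\nu$. Bijectivity modulo null sets follows because $h$ has an explicit inverse on $\mathbb{T}^2\setminus(\mathbb{T}^1\times N)$: given $(\dots,t_{-1},t_0)\in\Omega$ with $t_{-k}\notin\omega^{-1}(\text{branch points})$ for all $k$, recover $t:=t_0$ and $s:=\sum_{k\geq 1}\lfloor b t_{-k}\rfloor\,b^{-k}$; this inverse is defined on a set of full $\nu$-measure.

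The main subtlety, which is where I expect to spend care rather than ingenuity, is handling the null sets on which $H$ is discontinuous or the base-$b$ expansion is non-unique: one has to be sure that after discarding these the equivariance, bijectivity, and measure-preservation survive on a set that is simultaneously $H$-invariant on the torus side and $S$-invariant on the solenoid side. This is standard for natural extensions of the $\times b$ map, and taking the intersection of all forward and backward images of the bad sets (still a countable union of null sets) yields the desired invariant full-measure sets on which $h$ is a genuine bijective measure-preserving conjugacy, establishing the measure-theoretical isomorphism.
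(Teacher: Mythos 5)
Your construction is the same map as the paper's: the digit recursion $t_{-k}=(t_{-k+1}+s_k)/4$ is exactly the first coordinate of $H^{-k}(t,s)$, and your inverse formula $s=\sum_{k\ge 1}\lfloor 4t_{-k}\rfloor 4^{-k}$ is the one the paper uses to prove surjectivity. The only difference is that you supply the details (equivariance, the cylinder computation for $h_*m=\nu$, and the bookkeeping of null sets) that the paper dismisses as ``easy to see,'' so your proposal is correct and follows essentially the same route.
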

\begin{proof}
We define $h:\mathbb{T}^2 \to \Omega$ by $h(\theta)=(\ldots,t_{-1},t_{0})$, for $\theta=(t,s)\in \mathbb{T}^2$ so that $(t_0,s_0)=(t,s)$ and for each
$j \in \mathbb{N}$, $(t_{-j},s_{-j})=H^{-j}(t_0,s_0)$. Then, it is easy to see that $h$ is a bijective map.
Indeed, the injectivity of the baker map $H$ implies the injectivity of $h$. Assume $\textbf{t}=(\ldots,t_{-1},t_{0})\in \Omega$. Take
\begin{equation*}
 t=t_0, \  s=\sum_{i=0}^\infty \frac{\lfloor 4 t_{-i-1} \rfloor}{4^{(i+1)}}, \ \text{and} \ \theta = (t,s).
\end{equation*}
Then $h(\theta)=\textbf{t}$ and hence, $h$ maps the two dimensional torus $\mathbb{T}^2$ onto the inverse limit space $\Omega$. Also, $h \circ H=S \circ h$.
It is easy to see that, $h$ is a measure-theoretical isomorphism between the two measure preserving dynamical systems $(\mathbb{T}^2, H, m)$ and $(\Omega, S, \nu)$, where $m$ is the Lebesgue measure on $\mathbb{T}^2$.
\end{proof}
Let us take the invariant graph $\widetilde{F}$ is given by (\ref{be}). It admits an extension $\mathbb{F}\in \mathcal{F}_H$ of the form (\ref{skew00}) and an extension $F$
given by (\ref{sss}) over the solenoid map $S$.
 \begin{theorem}\label{thm777}
There exists an open set $\mathcal{U}_H \subset \mathcal{F}_H$ of skew products over the baker map $H$ such that each skew product $\mathbb{G} \in \mathcal{U}_H$
admits an attracting multi-graph or an attracting bony multi-graph.
\end{theorem}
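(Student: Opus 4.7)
The plan is to transfer the structures obtained over the solenoid in Theorem \ref{t144} and Corollary \ref{cor33} to the baker-map setting by means of the measure-theoretic isomorphism $h: (\mathbb{T}^2, H, m) \to (\Omega, S, \nu)$ of the preceding lemma, lifted to the fibered product by the identity on the interval fiber.

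First, I would consider the fibered lift $\widehat{h} := h \times \mathrm{id}_{\mathbb{I}}: \mathbb{T}^2 \times \mathbb{I} \to \Omega \times \mathbb{I}$. Because the fiber maps of any $\mathbb{G} \in \mathcal{F}_H$ depend on $\theta = (t,s)$ only through $t$, and because the fiber maps $f_{\textbf{t}}(x) = f_{t_0}(x)$ of the solenoid skew products built in Section~3 factor through the projection $\textbf{p}: \Omega \to \mathbb{T}^1$, there is a canonical correspondence $\Phi: \mathcal{F}_H \to \mathcal{F}^{\ast}$, where $\mathcal{F}^{\ast} \subset \mathcal{F}$ denotes the subfamily of solenoid skew products whose fiber maps factor through $\textbf{p}$. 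Concretely, $\Phi(\mathbb{G}) = G$ with $g_{\textbf{t}} = g_{t_0}$, and one readily checks that $\widehat{h} \circ \mathbb{G} = \Phi(\mathbb{G}) \circ \widehat{h}$ and that $\Phi$ is an isometry for the metric $\mathrm{dist}_{\mathcal{F}}$ defined in~(\ref{m}). Starting from the skew product $\mathbb{F} \in \mathcal{F}_H$ that extends $\widetilde{F}$ of~(\ref{be}), the image $F := \Phi(\mathbb{F})$ agrees with the solenoid extension built in~(\ref{sss}), so Theorem \ref{t144} provides an open neighbourhood $\mathcal{U} \subset \mathcal{F}$ of $F$ for which Corollary \ref{cor33} gives an attracting multi-graph or bony multi-graph $K(G) = \bigcup_{i=1}^n \Delta_i(G)$ for every $G \in \mathcal{U}$. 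I would then set $\mathcal{U}_H := \Phi^{-1}(\mathcal{U} \cap \mathcal{F}^{\ast})$; this is open in $\mathcal{F}_H$ by continuity of $\Phi$, and contains $\mathbb{F}$.

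The attracting multi-graph for $\mathbb{G} \in \mathcal{U}_H$ is then defined as $\mathbb{K}(\mathbb{G}) := \widehat{h}^{-1}(K(\Phi(\mathbb{G})))$. Since $\widehat{h}$ is fibrewise the identity on $\mathbb{I}$, the fiber $\mathbb{K}(\mathbb{G})_\theta$ coincides with $K(\Phi(\mathbb{G}))_{h(\theta)}$, so the cardinality of fibers is preserved $m$-almost surely, the dichotomy of Theorem \ref{thm88} (continuous attracting graph versus bony attractor) transfers verbatim along each fiber, and the ergodic SRB measures push forward through $\widehat{h}^{-1}$ to ergodic $\mathbb{G}$-invariant measures on $\mathbb{T}^2 \times \mathbb{I}$. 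The main obstacle is the lack of topological continuity of the baker map $H$: the isomorphism $h$ is only measurable, so $\mathbb{K}(\mathbb{G})$ should be understood as a measurable invariant set rather than a topologically compact one, and statements about closure of graphs and density of bones have to be phrased mod-$m$. The SRB property on the baker side must then be verified using that $h_\ast m = \nu$ (Remark \ref{e}) together with the absolute continuity of $\nu$ with respect to Lebesgue along the expanding direction, which is what ultimately allows the positive-Lebesgue-measure basin of $K(\Phi(\mathbb{G}))$ in $\Omega \times \mathbb{I}$ to be pulled back to a positive-Lebesgue-measure basin in $\mathbb{T}^2 \times \mathbb{I}$.
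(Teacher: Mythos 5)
Your proposal follows essentially the same route as the paper: both exploit that the fiber maps depend on $\theta=(t,s)$ only through $t$ to set up a one-to-one correspondence between the solenoid neighbourhood $\mathcal{U}$ and a neighbourhood $\mathcal{U}_H$ of $\mathbb{F}$, and then transfer the multi-graph / bony multi-graph structure through the measure-theoretic isomorphism $h$ conjugating $(\mathbb{T}^2,H,m)$ with $(\Omega,S,\nu)$. The only real difference is presentational: the paper realizes the attractor intrinsically on the baker side as the maximal attractor $\Delta_i(\mathbb{G})=\bigcap_{n\geq 0}\mathbb{G}^n(\mathbb{T}^2\times I_i)$ of the trapping region and identifies it with $\mathrm{Cl}(\Gamma_{\mathbb{G},i})$ using that the fiber over $(t,s)$ is the nested intersection $\bigcap_m g_{t_{-1}}\circ\cdots\circ g_{t_{-m}}(I_i)$, rather than taking the measurable pullback $\widehat{h}^{-1}(K(\Phi(\mathbb{G})))$ as you do, which is why you (correctly) have to flag the mod-$m$ caveat that the paper's formulation avoids.
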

\begin{proof}
Take any skew product $\mathbb{G} \in \mathcal{F}_H$ over the baker map $H$ of the form (\ref{skew00}) sufficiently small to $\mathbb{F}$. Note that for iterates of $\mathbb{G}$ we denote
\begin{equation*}
\mathbb{G}^n(\theta,x)=(H^n(\theta),g_{H^{n-1}(\theta)}\circ \cdots \circ g_{\theta}(x))= (H^n(\theta),g_\theta^{n}(x)).
\end{equation*}
Since the fiber maps $g_{\theta}$ depend on $\theta=(t,s)$ only through $t$, so we can write $g^n_{\theta}(x)=g_{\omega^{n-1}(t)}\circ \dots \circ g_{t}(x)$.
Thus $g^n_{\theta}(x)=g^n_{\textbf{t}}$, for each $\textbf{t}=(\dots,t_{-1},t_0)\in \Omega$ with $t_0=t$. By this fact, for each skew product $G\in \mathcal{U}$
over the base map $S$ satisfying in the conclusion of Lemma \ref{cor0}, we
associate a skew product $\mathbb{G}$ over the baker map $H$  defined by $\mathbb{G}(\theta,x)=(H(\theta),g(\theta,x))=(H(\theta),g_t(x))$, $\theta=(t,s)\in \mathbb{T}^2$, where $g_t$, $t\in \mathbb{T}^1$, are the fiber maps of $G$. Therefore, there exist an open set $\mathcal{U}_H\subset \mathcal{F}_H$ and a one to one correspondence between the skew products $G\in \mathcal{U}$ over the base map $S$
 and the skew products $\mathbb{G}\in \mathcal{U}_H$ over the baker map $H$. Moreover, $(h \times id)\circ \mathbb{G}=G\circ (h \times id)$.

Given a skew product transformation $G\in \mathcal{U}$, take the invariant graphs $\gamma_{G,i}$ given by Theorem \ref{thm000} defined on the subset $\Omega_i(G)\subseteq \Omega$
with total measure. By Theorem \ref{thm88}, $\text{Cl}(\Gamma_{G,i})=\Delta_i(G)$, where $\Delta_i(G)$ is the maximal attractor given by (\ref{maxi}), $i=1, \dots,n$.

Now, for each $\theta=(t,s)\in \mathbb{T}^2$, define $\gamma_{\mathbb{G},i}(t,s) =\gamma_{G,i}(h(t,s))$.
We denote the graph of $\gamma_{\mathbb{G},i}$ by $\Gamma_{\mathbb{G},i}$.
Notice that the fiber map of $\mathbb{G}$ is constant along the stable leaves of the baker map $H$ given by vertical
fibers $\{t\} \times \mathbb{T}^1$, hence $\gamma_{\mathbb{G},i}(t,s)$ is constant along the stable leaves of $H$.

Take $I_i(\textbf{t})=\{\textbf{t}\}\times I_i$ for $\textbf{t}=(\cdots,t_{-1},t_0)\in \Omega$, and
\begin{equation*}
  I_i(\textbf{t},m,G)=g_{S^{-1}(\textbf{t})}\circ \cdots \circ g_{S^{-m}(\textbf{t})}(I_i)=g_{t_{-1}}\circ \cdots \circ g_{t_{-m}}(I_i),
\end{equation*}
for $i=1, \dots,n$. Then
\begin{equation*}
 \Delta_i(G)\bigcap I_i(\textbf{t})=\bigcap_{n\geq 0}I_i(\textbf{t},m,G).
\end{equation*}
Note that, by construction, the fiber maps of the skew products $G$ and $\mathbb{G}$ are the same.
Hence, by these observations and the previous lemma, $\text{Cl}(\Gamma_{\mathbb{G},i})$ is an attracting invariant graph or an attracting bony graph for $\mathbb{G}$.

Let us take
\begin{equation}\label{maxi1}
 A_{max}(\mathbb{G}):=\bigcap_{n\geq 0}\mathbb{G}^n(\mathbb{T}^2 \times I), \ \text{and} \ \Delta_i(\mathbb{G}):=\bigcap_{n\geq 0}\mathbb{G}^n(\mathbb{T}^2 \times I_i), \ 1\leq i\leq n.
\end{equation}
Then $\Delta_i(\mathbb{G})=\text{Cl}(\Gamma_{\mathbb{G},i})$, $1\leq i\leq n$, and thus the union $K(\mathbb{G}):=\bigcup_{i=1}^n \Delta_i(\mathbb{G})$ is an attracting multi-graph or bony multi-graph.
\end{proof}
\section{Data Availability}
Data sharing not applicable to this article as no datasets were generated or analysed during the current study.
 \section*{Conflict of interest}
 The authors declare that they have no conflict of interest.


\begin{thebibliography}{30}
\bibitem{AB}
Aliprantis, C., Border, K.,: Infinite Dimensional Analysis, Third edition, Springer-Verlag,
Berlin (2006)

\bibitem{AL}
Arnold, L.: Random Dynamical Systems. Springer Monographs in Mathematics, Springer-Verlag Berlin and Heidelberg GmbH $\&$ Co. (2002)

\bibitem{BF}
Barral, J., Feng, D.-J.: Weighted thermodynamic formalism on subshifts and applications, Asian J. Math., \textbf{16}, 319–352 (2012)

\bibitem{B}
Bowen, R.: Equilibrium States and the Ergodic Theory of Anosov Diffeomorphisms. Lect. Notes in Math. \textbf{470}, Springer-Verlag, Berlin (1975)

\bibitem{BHN}
 Broomhead, D., Hadjiloucas D., and Nicol M.: Random and deterministic perturbation of a class of skew-product systems. Dyn. Stab. Syst. \textbf{14}, 115-128 (1999).

\bibitem {BY}
Bugeaud, Y.: Distribution modulo one and Diophantine approximation, Cambridge Tracts in Mathematics. Cambridge University Press, Cambridge (2012)

\bibitem {C2}
Campbell, K.M.: Observational noise in skew product systems. Phys. D. \textbf{107}, 43–56 (1997)

\bibitem{CD}
Campbell, K.M., Davies, M.E.: The existence of inertial functions in skew product systems. Nonlinearity \textbf{9}, 801–817 (1996)

\bibitem{CFH}
 Cao, Y.-L., Feng D.-J.,  Huang, W.: The thermodynamic formalism for sub-additive potentials, Discrete Contin. Dyn. Syst. \textbf{20}, 639-657 (2008)

\bibitem{DK}
Davies, M.E., Campbell, K.M.: Linear recursive filters and nonlinear dynamics. Nonlinearity \textbf{9}, 487-499 (1996)

\bibitem{E}
Edalat, A.: Power Domains and Iterated Function Systems. Inform. and Comput. \textbf{124}, 182-197 (1996)

\bibitem{FKG}
Fadaei, S., Keller, G., Ghane, F. H.: Invariant graphs for chaotically driven maps, Nonlinearity. \textbf{31} (11), 5329-5349 (2018)

\bibitem{FH}
Feng, D.-J. Huang, W.: Variational principles for weighted topological pressure. Preprint, arXiv:1412.0078v1.


\bibitem{F}
Furstenberg, H.: Strict ergodicity and transformation of the torus. Am. J. Math., \textbf{83}, 573–601 (1961)

\bibitem{GO}
Gelfert, K., Oliveira, D.: Invariant multi-graphs in step skew products, Dyn. Syst. \textbf{39} (2019)

\bibitem{HPS}
Hirsch, M., Pugh, C.: Stable manifolds and hyperbolic sets. Bull. Am. Math. Soc. \textbf{75}, 149–152 (1969)

\bibitem{HPS1}
Hirsch, M., Pugh, C., Shub, M.: Invariant Manifolds. Lecture Notes in Mathematics, vol. \textbf{583}. Springer, Berlin (1977)

\bibitem{H}
Homburg, A. J.: Synchronization in Minimal Iterated Function Systems on Compact Manifolds, Bulletin of the Brazilian Mathematical Society, \textbf{49} (3), 615–635 (2018)

\bibitem{HOY}
Hunt, B.R., Ott, E., Yorke, J.A.: Differentiable generalized synchronization of chaos. Phys. Rev. E \textbf{55}, 4029 (1997)

\bibitem{JJ}
J\"{a}ger, T.: skew product systems with one dimensional fibres. Lecture notes. (2013)

\bibitem{J}
J\"{a}ger, T.: Quasiperiodically forced interval maps with negative Schwarzian derivative. Nonlinearity, \textbf{16}(4), 1239-1255 (2003)

\bibitem{JK}
J\"{a}ger, T., Keller, G.: Random minimality and continuity of invariant graphs in random dynamical systems, Trans. Amer. Math. Soc. \textbf{368}, 6643-6662 (2016)

\bibitem{KV}
Kleptsyn, V., Volk, D.: Physical measures for random walks on interval. Mosc. Math. J. \textbf{14}(2), 339–365 (2014)

\bibitem {Kr}
Krengel, U.: Ergodic Theorems. de Gruyter Studies in Mathematics, vol. \textbf{6}. Walter de Gruyter, Berlin (1985)

\bibitem{Kud}
 Kudryashov, Y. G.: Bony attractors. Funkts. Anal. Prilozhen. \textbf{44}(3), 73-76 (2010): English
transl. Functional Anal. Appl. \textbf{44}(3), 219-222 (2010)

\bibitem{LW}
Ledrappier, F., Walters, P.: A relativised variational principle for continuous transformations. J.Land. Math. Soc. \textbf{16}, 568-579 (1977)

\bibitem{M}
Mane, R.: Ergodic Theory and Differentiable Dynamics. Vol. \textbf{8} of Ergebnisse der Mathematik und ihrer Grenzgebiete (3) [Results in Mathematics and Related Areas (3)]. Springer, Berlin (1987)

\bibitem{Mil}
Milnor, J.: On the concept of attractor. Commun. Math. Phys. \textbf{99} 177–195, (1985).

\bibitem{NP}
 Nassiri, M., Pujals, E. R.: Robust transitivity in hamiltonian dynamics. Annales scientifiques de l'École Normale Supérieure, Série \textbf{4}, Tome \textbf{45} (2), 191-239 (2012).


\bibitem{PC}
Pecora, L.M., Carroll, T.L.: Discontinuous and nondifferentiable functions and dimension increase
induced by filtering chaotic data. Chaos \textbf{6}, 432–439 (1996)

\bibitem{PR}
Pikovsky, A., Rosenblum, M., and Kurths, J.: Synchronization. A universal concept in nonlinear sciences. Cambridge University Press (2001)

\bibitem {RV}
Ramos, V., Viana, M.: Equilibrium states for hyperbolic potentials. Nonlinearity. \textbf{30}, 825-847 (2017)

\bibitem{R}
Rauch, M.: Variational principles for the topological pressure of measurable potentials. Discrete Contin. Dyn. Syst. Series S \textbf{10}(2), 367-394 (2017)


\bibitem{R2}
Ruelle, D., Sinai, Y.G.: From dynamical systems to statistical mechanics and back. Physica A: Statistical
Mechanics and its Applications. \textbf{140}(1-2), 1–8 (1986)

\bibitem{RS}
 Rulkov, N.F., Sushchik, M.M., Tsimring, L.S., Abarbanel, H.D.I.: Generalized synchronization of
chaos in directionally coupled chaotic systems. Phys. Rev. E. \textbf{51}, 980–994 (1995)

\bibitem{S}
Sinai, Y.G.: Gibbs measures in ergodic theory. Russian Math. Surveys. \textbf{27}, 21–69 (1972)

\bibitem {Stark1}
 Stark, J.: Invariant graphs for forced systems. Phys. D. \textbf{109}, 163-179 (1997)

\bibitem {Stark2}
Stark, J.: Regularity of invariant graphs for forced systems. Ergod. Theory Dyn. Syst. \textbf{9}, 155-199 (1999)

\bibitem{SD}
Stark, J., Davies, M.E.: Recursive filters driven by chaotic signals. In: IEE Colloquium on Exploiting
Chaos in Signal Processing. IEE Digest. vol. \textbf{143}, 1–516 (1994)

\bibitem{SS}
Stark, J., Sturman, R.: Semi-uniform ergodic theorems and applications to forced systems. Nonlinearity. \textbf{13}(1), 113-143 (2000)

\bibitem{VO}
Viana, M., Oliveira, K.: Foundationa of Ergodic Theory. Cambridge Studies in Advanced Mathematics (2016)

\bibitem{VY}
Viana, M., Yang, J.: Physical measures and absolute continuity for one-dimensional center direction. Ann.
de Inst. Henri Poincare (C) Non Linear Anal. 30(5) (2013), 845–877.



\bibitem{W}
 Walter, P.: An Introduction to Ergodic Theory. Springer-Verlag, New York, Heidelberg, Berlin (1982)

 \bibitem{ZG}
Zaj, M., Ghane, F.H.: Non hyperbolic Solenoidal thick bony attractors. Qual. Theory Dyn. Syst. \textbf{18} (1), 35-55 (2019)
\end{thebibliography}
\end{document}